\documentclass[12pt]{article}
\usepackage{amsmath, amssymb,amsfonts,bm,bbm,verbatim,multirow,color,xcolor,mathtools}
\usepackage{epic,eepic,psfrag,epsfig}
\usepackage[sf,bf,SF,footnotesize]{subfigure}
\usepackage{graphicx}
\usepackage{amsthm,breakcites}
\usepackage{amscd}
\usepackage{epsfig}
\usepackage{fullpage}
\usepackage{algorithm}
\usepackage{algpseudocode}
\usepackage{aligned-overset}
\usepackage{tikz-cd}
\usepackage{upgreek}

\usepackage{tikz}
\tikzstyle{edge} = [fill,opacity=.5,fill opacity=.5,line cap=round, line join=round, line width=50pt]
\pgfdeclarelayer{background}
\pgfsetlayers{background,main}
\usetikzlibrary{calc}
\usetikzlibrary{arrows,arrows.meta}

\usepackage[round]{natbib}


\setlength{\textheight}{23.4cm}
\setlength{\topmargin}{-0.15in}
\setlength{\footskip}{36pt}

\usepackage{longtable}
\usepackage{setspace}
\usepackage{enumerate}
\usepackage{array}
\usepackage[small]{caption}
\RequirePackage[colorlinks,citecolor=blue,urlcolor=blue,breaklinks]{hyperref}

\newtheorem{thm}{Theorem}
\newtheorem{defn}[thm]{Definition}
\newtheorem{lemma}[thm]{Lemma}

\newtheorem{prop}[thm]{Proposition}
\newtheorem{cor}[thm]{Corollary}

\newtheorem*{remark}{Remark}

\newtheorem{assumption}{Assumption}



\DeclareMathOperator*{\argmin}{argmin}

\DeclareMathOperator*{\tr}{tr}
\DeclareMathOperator*{\Var}{Var}
\DeclareMathOperator*{\Cov}{Cov}
\makeatletter
\newcommand{\ostar}{\mathbin{\mathpalette\make@circled\star}}
\newcommand{\make@circled}[2]{%
  \ooalign{$\m@th#1\smallbigcirc{#1}$\cr\hidewidth$\m@th#1#2$\hidewidth\cr}%
}
\newcommand{\smallbigcirc}[1]{%
  \vcenter{\hbox{\scalebox{0.77778}{$\m@th#1\bigcirc$}}}%
}
\makeatother

\newlength{\widebarargwidth}
\newlength{\widebarargheight}
\newlength{\widebarargdepth}

\newcommand\numberthis{\addtocounter{equation}{1}\tag{\theequation}}

\def\hat{\widehat}
\def\tilde{\widetilde}

\usepackage{pgfplots}
\pgfplotsset{width=12cm,compat=1.9}

\usepackage[draft,inline,nomargin,index]{fixme}
\fxsetup{theme=color,mode=multiuser}

\FXRegisterAuthor{rs}{ars}{\color{green} RS}
\FXRegisterAuthor{kv}{akv}{\color{purple} KV}
\FXRegisterAuthor{tm}{atm}{\color{orange} TM}

\allowdisplaybreaks

\begin{document}
\title{High-probability minimax lower bounds}
\author{Tianyi Ma$^*$, Kabir A. Verchand$^{*, \circ}$
and Richard J. Samworth$^*$\\ \\
$^*$Statistical Laboratory, University of Cambridge\\
$^\circ$Industrial and Systems Engineering, Georgia Institute of Technology}
\maketitle
\begin{abstract}
    The minimax risk is often considered as a gold standard against which we can compare specific statistical procedures.  Nevertheless, as has been observed recently in robust and heavy-tailed estimation problems, the inherent reduction of the (random) loss to its expectation may entail a significant loss of information regarding its tail behaviour.  In an attempt to avoid such a loss, we introduce the notion of a minimax quantile, and seek to articulate its dependence on the quantile level.  To this end, we develop high-probability variants of the classical Le Cam and Fano methods, as well as a technique to convert local minimax risk lower bounds to lower bounds on minimax quantiles.  To illustrate the power of our framework, we deploy our techniques on several examples, recovering recent results in robust mean estimation and stochastic convex optimisation, as well as obtaining several new results in covariance matrix estimation, sparse linear regression, nonparametric density estimation and isotonic regression.  Our overall goal is to argue that minimax quantiles can provide a finer-grained understanding of the difficulty of statistical problems, and that, in wide generality, lower bounds on these quantities can be obtained via user-friendly tools.  
\end{abstract}
\section{Introduction}

Over the last 15 years or so, minimax lower bounds, which used to be regarded as primarily a branch of information theory, have entered the statistical mainstream.  This transition has been brought about by an increased awareness of what they have to offer to statisticians, 
in terms of providing a benchmark against which we can compare the performance of different statistical procedures.  Their increased prominence can be attributed in part to the surge of interest in high-dimensional statistical problems over the last 25 years, which has encouraged statisticians to move away from traditional diverging sample size asymptotics (with other problem parameters held fixed) and to focus instead on finite-sample performance bounds.

To fix the decision-theoretic foundations that underpin this work, suppose that we are interested in estimating a parameter $\theta$ taking values in a parameter space $\Theta$, based on data~$X$ taking values in a measurable space $\mathcal{X}$.  The notation here might suggest that $\Theta$ is a subset of a Euclidean space, so it is worth emphasising that we may be interested in estimating more complicated objects like density or regression functions, in which case $\Theta$ would be an appropriate function space.  Likewise, the data need not be a single observation, but could be a sample of size $n$, for example.  Let $\hat{\Theta}$ denote the set of estimators of $\theta \in \Theta$, i.e.~the set of measurable functions from $\mathcal{X}$ to $\Theta$.  Our loss function is a measurable function $L: \Theta \times \Theta \rightarrow [0,\infty)$.

Since $L\bigl(\hat{\theta}(X),\theta\bigr)$ is random when $\hat{\theta} \in \hat{\Theta}$, it is common to work with the \emph{risk function}
\[
R(\hat{\theta},\theta) \coloneqq \mathbb{E}_\theta L\bigl(\hat{\theta}(X),\theta\bigr),
\]
a deterministic quantity.  
A typical ideal goal, then, is to find an estimator $\hat{\theta}$ that achieves the \emph{minimax risk} in the sense that 
\[
\sup_{\theta \in \Theta} R(\hat{\theta},\theta) = \inf_{\tilde{\theta} \in \hat{\Theta}}\sup_{\theta \in \Theta} R(\tilde{\theta},\theta).
\]

In this paper, we revisit one of the early choices in the development of this paradigm, namely the reduction of the random $L\bigl(\hat{\theta}(X),\theta\bigr)$ to the deterministic risk function.  This simplification may fail to tell the whole story regarding the variability and tail behaviour of the loss function for different data realisations.  For instance, consider the simple problem of estimating a univariate mean $\theta$ with respect to squared error loss $L(\hat{\theta},\theta) \coloneqq (\hat{\theta} - \theta)^2$, based on a sample $X_1,\ldots,X_n$ from a distribution having variance at most $\sigma^2$.  It is well-known that the minimax risk in this problem is $\sigma^2/n$, and is attained by the sample mean $\bar{X} \coloneqq n^{-1}\sum_{i=1}^n X_i$.  On the other hand, \citet[][Proposition~6.2]{catoni2012challenging} proved that the sample mean has undesirable tail behaviour in the sense that for each $\delta \leq 1/e$, there exists a distribution with mean~$\theta$ and variance $\sigma^2$ such that with probability at least $\delta$,
\[
L(\bar{X},\theta) \geq \frac{\sigma^2}{n\delta}\biggl(1 - \frac{e\delta}{n}\biggr)^{n-1} > \frac{\sigma^2}{en\delta}.
\]
This lower bound admits an interpretation as a lower bound on the $(1 - \delta)$th quantile of $L(\bar{X}, \theta)$.  By contrast, the performance of the median-of-means estimator $\hat{\theta}^{\mathrm{MoM}}_{\delta}$~\citep[][p.~243]{nemirovski1983problem} can be controlled via an upper bound on the $(1-\delta)$th quantile of $L(\hat{\theta}^{\mathrm{MoM}}_{\delta}, \theta)$: with probability at least $1 - \delta$,
\[
L(\hat{\theta}^{\mathrm{MoM}}_{\delta}, \theta) \leq C \cdot  \frac{\sigma^2\log(1/\delta)}{n},
\]
for some universal constant $C > 0$.  

This stark gap in the dependence on $\delta$ in the above bounds, and hence the guarantees that can be made with high confidence for the two estimators, makes clear that the reduction to the minimax risk may entail significant information loss.  We can retain this information by defining, for $\delta \in (0,1]$, the \emph{minimax $(1-\delta)$th quantile}
\[
\mathcal{M}(\delta) \coloneqq \inf_{\hat{\theta} \in \hat{\Theta}} \sup_{\theta \in \Theta} \inf\Bigl\{r \in [0,\infty):\mathbb{P}_\theta\bigl\{L\bigl(\hat{\theta}(X),\theta\bigr) > r\bigr\} \leq \delta\Bigr\}.
\]
It is often the case that upper bounds on the minimax risk are obtained by finding an estimator $\hat{\theta} \in \hat{\Theta}$ and integrating a tail bound of the form
\[
\sup_{\theta \in \Theta} \mathbb{P}_\theta\bigl\{L\bigl(\hat{\theta}(X),\theta\bigr) > r_\delta\bigr\} \leq \delta;
\]
in such circumstances, we immediately have $\mathcal{M}(\delta) \leq r_\delta$, so upper bounds on the minimax quantile are frequently implicit in upper bound arguments for the minimax risk.  The main aim of this work is to introduce user-friendly tools to facilitate lower bounds on $\mathcal{M}(\delta)$; we give a variety of examples where these lower bounds match corresponding upper bounds up to constants, and thereby provide a finer-grained understanding of these statistical challenges.   

As two further illustrations of the benefits of minimax quantiles, we first remark that the basis for the analysis of several estimators is a union bound over (potentially exponentially) many different events.  Examples include the sample covariance matrix as an estimator of its population counterpart in operator norm~\citep[][Theorem 4.6.1]{vershynin2018high} and the Tukey median \citep{tukey1975mathematics,chen2018robust}, as well as tournament and skeleton estimators (\citet[][Chapters~6 and~7]{devroye2001combinatorial} and \citet{chen2016general}).  In such cases, it is the behaviour of extreme quantiles of simpler estimators that govern ultimate performance.  Second, consider the problem of robust mean estimation with respect to squared error loss in a Huber contamination model \citep{huber1964robust}.  There, the unboundedness of the loss function, combined with the fact that there is a positive probability with which all data are contaminated, ensures that the minimax risk is infinite, so uninformative from the point of view of comparing estimators; on the other hand, in Corollary~\ref{cor:huber-mean} we will determine (the finite) $\mathcal{M}(\delta)$ for a broad interval of quantile levels $\delta$.  

In the high-probability minimax framework, an estimator achieving an upper bound on the minimax quantile $\mathcal{M}(\delta)$ may in principle depend on $\delta$.  Indeed, this is the case in the median-of-means example discussed above.  Nevertheless, there is the potential for a single estimator $\hat{\theta}$ to attain the minimax quantile (up to constants) simultaneously over a wide range of quantile levels.  Such adaptation to the quantile level may be interpreted as a further justification for the use of $\hat{\theta}$.

To outline our contributions in greater detail, in Section~\ref{sec:setup}, we formally define the minimax quantile $\mathcal{M}(\delta)$, and a closely-related minorant $\mathcal{M}_-(\delta)$, which we call the \emph{lower minimax quantile}; see Definitions~\ref{defn:minimax-quantile} and~\ref{def:lower-minimax} below.  This latter quantity turns out to be especially amenable to analysis via standard tools in the minimax lower bound literature, and in Lemma~\ref{lemma:high-prob-le-cam-tv} and Lemma~\ref{lemma:fano's-minimax-quantile} respectively, we apply Le Cam's two-point lemma and Fano's inequality to obtain lower bounds on the lower minimax quantile.  In fact, in Theorem~\ref{lemma:expectation-lb-to-high-prob-lb}, we provide a reduction from a local minimax lower bound on the risk to a lower bound on $\mathcal{M}_-(\delta)$, so the full range of minimax lower bound techniques, including Assouad's lemma, can be applied.  

In Section~\ref{sec:examples}, we provide a suite of examples to illustrate the power of the framework and give a sense of the breadth of problems to which it may be applied.  Indeed, the settings we consider include (robust) multivariate mean estimation, covariance matrix estimation, sparse linear regression, nonparametric density estimation, isotonic regression and stochastic convex optimisation.  In all of these examples, we determine the minimax quantile up to multiplicative constants\footnote{Strictly speaking, in sparse linear regression, these constants depend on a restricted eigenvalue condition, while in nonparametric density estimation, they depend on smoothness parameter of a H\"older class.}.  One of the main messages we aim to convey through this selection of examples is that it is relatively straightforward to apply our results from Section~\ref{sec:setup}, and thereby to enhance the strength of the justification for many statistical procedures through the minimax quantile paradigm.  

Proofs of our main results in Section~\ref{sec:setup} are given in Section~\ref{sec:proofs-setup}, with the exception of one very short corollary.  The Appendix comprises some standard minimax risk lower bounds, as well as the statements and proofs of several auxiliary lemmas.





\subsection{Related work}

To the best of our knowledge, our work represents the first systematic study of high-probability minimax lower bounds, though isolated results in certain settings have started to appear in recent years.  The importance of the dependence on the quantile level of upper bounds on the minimax quantile has been emphasised in the literature on robust and heavy-tailed estimation \citep[e.g.][]{catoni2012challenging,lugosi2019sub-gaussian,hopkins2020mean}.  On the other hand, lower bounds are scarcer, though \citet{devroye2016subgaussian} established a lower bound on $\mathcal{M}_{-}(\delta)$ in univariate mean estimation, and \citet{depersin2022optimal} extended this result to the multivariate case using techniques developed by~\citet{lecue2013learning} based on the Gaussian shift theorem.  In particular, our Proposition~\ref{prop:mean-estimation} provides an alternative, streamlined proof of \citet[][Theorem~4]{depersin2022optimal}.  Contemporaneously with our work, \citet{elhanchi2024minimax} obtained an exact but less explicit expression for the minimax quantile in this mean estimation problem (and in linear regression) via a Bayesian argument.

In the context of linear functional estimation, \citet{donoho1991geometrizing} characterised $\mathcal{M}_-(\delta)$ in terms of a Hellinger modulus of continuity.  Under similar assumptions, explicit universal constants for matching upper and lower bounds on the minimax quantile $\mathcal{M}(\delta)$ were provided by \citet{polyanskiy2021dualizing}; see also \citet{juditsky2009convex}, whose primary focus is on affine estimators.  A quantity similar to $\mathcal{M}_-(\delta)$ was introduced by~\citet{donoho1994statistical} to study minimax confidence interval length in the setting of linear functional estimation.   He showed that it could be expressed in terms of another modulus of continuity~\citep[see also][Section 5]{cai2012minimax}.  

Recently, in a different yet still univariate setting,~\citet{wang2024anytime} and \citet{duchi2024information} invoked information-theoretic techniques to obtain lower bounds on the minimax length of an anytime-valid $(1-\delta)$-level confidence interval.  \citet[][Theorem 2.3]{tsybakov2009introduction} established a lower bound on $\mathcal{M}_-(\delta)$ in the context of univariate nonparametric regression.  See also~\citet{ehrenfeucht1989general} for early work on sample complexity lower bounds for fixed quantile levels in classification problems within the learning theory community.  A closely-related concept to minimax quantiles is that of the Accuracy Confidence (AC) function; indeed, this may be regarded as a generalised inverse of the lower minimax quantile function.  Accurate bounds on the AC function in regression and classification problems have been obtained by~\citet{devore2006approximation}, \citet{temlyakov2008approximation} and~\citet{kerkyacharian2014optimal}.    

In stochastic convex optimisation,~\citet{davis2021low} consider heavy-tailed stochastic gradients and design methods with logarithmic dependence on the quantile level.  Very recently,~\citet{carmon2024price} introduced the notion of a minimax quantile in this setting and applied information-theoretic tools to obtain lower bounds that match their upper bounds up to multiplicative universal constants.  In Section~\ref{sec:SCO}, we provide a different proof of their lower bound based on our general theory developed in Section~\ref{sec:setup} in conjunction with Le Cam's two-point method.

\subsection{Notation}
Given $a, b \in \mathbb{R}$, we write $a \vee b \coloneqq \max(a,b)$ and $a \wedge b \coloneqq \min(a,b)$.  For $n\in\mathbb{N}$, we define $[n] \coloneqq \{1,\ldots,n\}$.
We let $\mathbb{S}^{d-1}$ denote the unit Euclidean sphere in $\mathbb{R}^d$ and for $R > 0$, write $\mathbb{B}_2^{d}(R)$ for the closed Euclidean ball in $\mathbb{R}^d$ of radius $R$.  For $v=(v_1,\ldots,v_d)^\top \in\mathbb{R}^d$ and $q>0$, we let $\|v\|_q \coloneqq \bigl( \sum_{j=1}^d |v_j|^q \bigr)^{1/q}$, $\|v\|_0 \coloneqq \sum_{j=1}^d \mathbbm{1}_{\{v_j \neq 0\}}$ and $\|v\|_{\infty} \coloneqq \max_{j\in[d]} |v_j|$.  We let $\mathcal{S}^{d \times d}_{+}$ and $\mathcal{S}_{++}^{d \times d}$ denote the cones of symmetric, positive semi-definite and symmetric, positive definite matrices in $\mathbb{R}^{d \times d}$ respectively.  For symmetric matrices $A, B \in \mathbb{R}^{d \times d}$, we write $A \succeq B$ if $A - B \in \mathcal{S}^{d \times d}_{+}$ and $A \preceq B$ if $B - A \in \mathcal{S}^{d \times d}_{+}$.  For a matrix $A = (A_{ij})_{i \in [n], j \in [d]} \in \mathbb{R}^{n \times d}$, we let $\| A \|_{\infty} \coloneqq \max_{i \in [n], j \in [d]} \lvert A_{ij} \rvert$ and $\|A\|_{\mathrm{op}} \coloneqq \sup_{u\in\mathbb{S}^{n-1}, v\in\mathbb{S}^{d-1}} u^\top A v$.  Given $(a_1, \ldots, a_d)^\top \in\mathbb{R}^d$, let $\mathrm{diag}(a_1, \ldots, a_d) \in \mathbb{R}^{d \times d}$ denote the diagonal matrix with entries $a_1, \ldots a_d$.  We let $I_{d} \coloneqq \mathrm{diag}(1, \ldots, 1)$ denote the identity matrix in $\mathbb{R}^{d \times d}$ and, for $j \in [d]$, let $e_j \in \mathbb{R}^d$ denote the $j$th standard basis vector. Given a measurable space $(\mathcal{X}, \mathcal{A})$, we will denote the space of all probability measures on $\mathcal{X}$ as $\mathcal{Q}(\mathcal{X})$.  For $\omega=(\omega_1,\ldots,\omega_d)^\top, \omega'=(\omega_1',\ldots,\omega_d')^\top \in\{0,1\}^d$, we define their Hamming distance as $d_{\mathrm{H}}(\omega,\omega') \coloneqq \sum_{j=1}^d \mathbbm{1}_{\{\omega_j \neq \omega_j'\}}$.  We let $\mathrm{Pow}(S)$ denote the power set of an arbitrary set $S$.

If $I$ is an arbitrary index set, then for functions $f,g:I \rightarrow \mathbb{R}$, we write $f \gtrsim g$ if there exists a universal constant $c>0$ such that $f(i) \geq cg(i)$ for all $i \in I$, and write $f \lesssim g$ if there exists a universal constant $C>0$ such that $f(i) \leq Cg(i)$ for all $i \in I$. We write $f \asymp g$ if both $f \lesssim g$ and $f\gtrsim g$.  Similarly, we write $f \asymp_{\alpha} g$ if there exist $c(\alpha),C(\alpha)>0$, depending only on $\alpha$, such that $c(\alpha) g(i) \leq f(i) \leq C(\alpha) g(i)$ for all $i\in I$.

\section{Problem setup and main results} \label{sec:setup}


Let $(\Theta,d)$ be a non-empty pseudo-metric\footnote{$(\Theta,d)$ is a pseudo-metric space if $d:\Theta\times\Theta \to [0,\infty)$ satisfies for all $\theta_1,\theta_2,\theta_3 \in \Theta$ that $d(\theta_1,\theta_1) = 0$, $d(\theta_1,\theta_2) = d(\theta_2,\theta_1)$ and $d(\theta_1,\theta_2) \leq d(\theta_1,\theta_3) + d(\theta_2,\theta_3)$; in contrast to a metric space, we may have $d(\theta_1,\theta_2) = 0$ with $\theta_1 \neq \theta_2$.} space, equipped with the topology induced by the open balls in $d$ and the corresponding Borel $\sigma$-algebra.  It will be convenient for some of our examples to consider a slightly more general setting than that outlined in the introduction.  In particular, for $\theta\in\Theta$, let $\mathcal{P}_{\theta}$ denote a family of probability measures on a measurable space $(\mathcal{X},\mathcal{A})$, and define $\mathcal{P}_{\Theta} \coloneqq \{\mathcal{P}_{\theta} : \theta\in\Theta\}$.  Although in many applications, each $\mathcal{P}_\theta$ will consist of a single distribution, the additional generality allows us to cover heavy-tailed and Huber contamination models with our theory.  

Let $g:[0,\infty) \to [0,\infty)$ be an increasing (i.e.~non-decreasing) function and define a loss function $L : \Theta\times\Theta \to [0,\infty)$ by
\begin{align*}
    L(\theta_1, \theta_2) \coloneqq g\bigl( d(\theta_1,\theta_2) \bigr),
\end{align*}
for $\theta_1,\theta_2 \in \Theta$.  Writing $\hat{\Theta}$ for the set of all measurable functions from $\mathcal{X}$ to $\Theta$, for $\delta\in(0,1]$, $\theta\in\Theta$, $P_{\theta}\in\mathcal{P}_{\theta}$ and $\hat{\theta} \in \hat{\Theta}$, we then define the \emph{$(1-\delta)$th quantile} of $L(\hat{\theta}, \theta)$ under $P_{\theta}$ as
\begin{align*}
    \mathrm{Quantile} \bigl( 1-\delta; P_{\theta},L(\hat{\theta}, \theta) \bigr) \coloneqq \inf \Bigl\{ r\in [0,\infty) : P_{\theta} \bigl\{ L(\hat{\theta}, \theta) \leq r \bigr\} \geq 1-\delta \Bigr\}.
\end{align*}
In our more general setting, we have the following definition of minimax quantiles.
\begin{defn}[Minimax quantile]\label{defn:minimax-quantile}
    For $\delta\in(0,1]$, we define the \emph{minimax $(1-\delta)$th quantile} of the loss $L$ for estimating $\theta$ over $\mathcal{P}_{\Theta}$ as
    \begin{align*}
        \mathcal{M}(\delta) \equiv \mathcal{M}(\delta, \mathcal{P}_{\Theta}, L) \coloneqq \inf_{\hat{\theta}_{\delta} \in \hat{\Theta}} \sup_{\theta\in\Theta} \sup_{P_{\theta} \in \mathcal{P}_{\theta}}\, \mathrm{Quantile} \bigl( 1-\delta; P_{\theta},L(\hat{\theta}_{\delta}, \theta) \bigr).
    \end{align*}
\end{defn}
A high-probability upper bound for an estimator implies an upper bound on the minimax quantile. That is, if there exists $\hat{\theta}_{\delta} \in \hat{\Theta}$ such that with $P_{\theta}$-probability at least $1-\delta$, we have $L(\hat{\theta}_{\delta}, \theta) \leq \mathrm{UB}(\delta)$ for all $\theta\in\Theta$ and $P_{\theta}\in\mathcal{P}_{\theta}$, then $\mathcal{M}(\delta) \leq \mathrm{UB}(\delta)$.  The main goal of this section is to provide general lower bounds on minimax quantiles. The following basic result shows that a lower bound on the minimax quantile $\mathcal{M}(\delta)$ implies a lower bound on the minimax risk. 

\begin{prop} \label{lemma:minimax-quantile-to-minimax-risk}
    For $\delta\in(0,1]$, we have
    \begin{align*}
        \inf_{\hat{\theta} \in \hat{\Theta}} \sup_{\theta \in \Theta} \sup_{P_{\theta} \in \mathcal{P}_{\theta}} \mathbb{E}_{P_{\theta}} L(\hat{\theta}, \theta) \geq \delta \mathcal{M}(\delta).
    \end{align*}
\end{prop}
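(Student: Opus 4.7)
The plan is to use the elementary fact that for any nonnegative random variable $Y$ and any $r \geq 0$, one has $\mathbb{E}[Y] \geq r\,\mathbb{P}(Y > r)$, which is just Markov's inequality applied to the indicator of $\{Y > r\}$. The task is then to couple this bound with the definition of the minimax quantile $\mathcal{M}(\delta)$ so that the $r$ can be pushed up to $\mathcal{M}(\delta)$ and the probability stays above $\delta$.

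First I would fix an arbitrary $\hat{\theta} \in \hat{\Theta}$ and let
\[
M(\hat{\theta}) \coloneqq \sup_{\theta \in \Theta}\sup_{P_{\theta}\in\mathcal{P}_{\theta}} \mathrm{Quantile}\bigl(1-\delta;P_{\theta},L(\hat{\theta},\theta)\bigr),
\]
which satisfies $M(\hat{\theta}) \geq \mathcal{M}(\delta)$ by definition. If $M(\hat{\theta}) = 0$, then the bound is trivial, so assume $M(\hat{\theta}) > 0$ (the case $\mathcal{M}(\delta)=0$ is immediate). For any $r \in \bigl(0,M(\hat{\theta})\bigr)$, the definition of the supremum yields $\theta^* \in \Theta$ and $P_{\theta^*} \in \mathcal{P}_{\theta^*}$ with $\mathrm{Quantile}\bigl(1-\delta;P_{\theta^*},L(\hat{\theta},\theta^*)\bigr) > r$. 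By the definition of the quantile as an infimum, this forces $P_{\theta^*}\bigl\{L(\hat{\theta},\theta^*) \leq r\bigr\} < 1-\delta$, i.e.\ $P_{\theta^*}\bigl\{L(\hat{\theta},\theta^*) > r\bigr\} > \delta$.

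Combining this with the Markov-type inequality mentioned above gives
\[
\sup_{\theta\in\Theta}\sup_{P_{\theta}\in\mathcal{P}_{\theta}} \mathbb{E}_{P_{\theta}} L(\hat{\theta},\theta) \geq \mathbb{E}_{P_{\theta^*}} L(\hat{\theta},\theta^*) \geq r\, P_{\theta^*}\bigl\{L(\hat{\theta},\theta^*) > r\bigr\} \geq r\delta.
\]
Letting $r \uparrow M(\hat{\theta})$ and using $M(\hat{\theta}) \geq \mathcal{M}(\delta)$ yields
\[
\sup_{\theta\in\Theta}\sup_{P_{\theta}\in\mathcal{P}_{\theta}} \mathbb{E}_{P_{\theta}} L(\hat{\theta},\theta) \geq \delta\mathcal{M}(\delta),
\]
and taking the infimum over $\hat{\theta}\in\hat{\Theta}$ completes the proof.

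I do not foresee a genuine obstacle: the only subtlety is carefully interpreting the strict versus non-strict inequalities in the definition of the quantile when approximating $\mathcal{M}(\delta)$ from below, and handling the boundary case $\mathcal{M}(\delta) = \infty$ (in which the supremum of expectations is also infinite by the same argument applied to arbitrarily large $r$).
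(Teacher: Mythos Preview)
Your proof is correct and takes essentially the same approach as the paper: both rely on the Markov-type bound $\mathbb{E}_{P_\theta} L(\hat{\theta},\theta) \geq r\,P_\theta\{L(\hat{\theta},\theta) \geq r\}$ together with the defining property of the $(1-\delta)$th quantile. The only cosmetic difference is that the paper applies the bound directly at $r = \mathrm{Quantile}(1-\delta;P_\theta,L(\hat{\theta},\theta))$ using $P_\theta\{L(\hat{\theta},\theta) \geq r\} \geq \delta$, whereas you approximate from below and pass to the limit, which handles the strict/non-strict inequality issue explicitly.
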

Key to our development is the notion of a lower minimax quantile, which turns out to be more amenable to direct analysis.
\begin{defn}[Lower minimax quantile] \label{def:lower-minimax}
For $\delta\in(0,1]$, we define the \emph{lower minimax $(1-\delta)$th quantile} of the loss $L$ for estimating $\theta$ over $\mathcal{P}_{\Theta}$ as 
\[
\mathcal{M}_{-}(\delta) \equiv \mathcal{M}_-(\delta,\mathcal{P}_{\Theta}, L) \coloneqq \inf \Bigl\{ r\in [0,\infty) : \inf_{\hat{\theta} \in \hat{\Theta}} \sup_{\theta\in\Theta} \sup_{P_{\theta} \in \mathcal{P}_{\theta}}\, P_{\theta} \bigl\{ L(\hat{\theta}, \theta) > r \bigr\} \leq \delta \Bigr\}.
\]
\end{defn}
The following theorem relates minimax quantiles to their lower analogues.  In particular, it reveals that the lower minimax quantile is not only a minorant of the corresponding minimax quantile, but also that the two quantities coincide almost everywhere.  
\begin{thm}\label{lemma:important-lower-bound}
    For all $\delta\in(0,1]$ and $\xi \in (0,\delta)$, we have
    \begin{align}
    \label{Eq:LowerUpperBound}
        \mathcal{M}_-(\delta) \leq \mathcal{M}(\delta) \leq \mathcal{M}_{-}(\delta - \xi).
    \end{align}
Moreover, $\mathcal{M}(\delta) = \mathcal{M}_-(\delta)$ at all but countably many points $\delta \in (0,1]$.
\end{thm}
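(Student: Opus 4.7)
The plan is to obtain the two inequalities in \eqref{Eq:LowerUpperBound} directly from the definitions, and then to deduce the almost-everywhere equality via a monotonicity argument. Throughout, it will be convenient to work with the auxiliary quantity
\[
A(r) \coloneqq \inf_{\hat\theta \in \hat\Theta} \sup_{\theta \in \Theta} \sup_{P_\theta \in \mathcal{P}_\theta} P_\theta\bigl\{ L(\hat\theta, \theta) > r \bigr\},
\]
which is non-increasing in $r$, so that $\mathcal{M}_-(\delta) = \inf\{r \geq 0 : A(r) \leq \delta\}$.

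For the left-hand inequality $\mathcal{M}_-(\delta) \leq \mathcal{M}(\delta)$, I would fix an arbitrary $\hat\theta_\delta \in \hat\Theta$ and set $r^* \coloneqq \sup_{\theta, P_\theta} \mathrm{Quantile}\bigl(1-\delta; P_\theta, L(\hat\theta_\delta, \theta)\bigr)$. Right-continuity of the distribution function of $L(\hat\theta_\delta, \theta)$ gives $P_\theta\bigl\{L(\hat\theta_\delta, \theta) > \mathrm{Quantile}(1-\delta; P_\theta, L(\hat\theta_\delta, \theta))\bigr\} \leq \delta$ for each $(\theta, P_\theta)$, and since $r^*$ dominates each such quantile, we also have $P_\theta\{L(\hat\theta_\delta, \theta) > r^*\} \leq \delta$. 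Hence $A(r^*) \leq \delta$, so $\mathcal{M}_-(\delta) \leq r^*$; infimising over $\hat\theta_\delta$ completes this step.

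For the right-hand inequality, I would fix $r' > \mathcal{M}_-(\delta - \xi)$ and combine the monotonicity of $A$ with the defining infimum to conclude that $A(r') \leq \delta - \xi$. Appealing to the definition of infimum again, I can then select an estimator $\hat\theta^*$ satisfying $\sup_{\theta, P_\theta} P_\theta\{L(\hat\theta^*, \theta) > r'\} \leq \delta - \xi/2 < \delta$. This strict inequality forces $\mathrm{Quantile}\bigl(1-\delta; P_\theta, L(\hat\theta^*, \theta)\bigr) \leq r'$ for every $(\theta, P_\theta)$, whence $\mathcal{M}(\delta) \leq r'$; letting $r' \downarrow \mathcal{M}_-(\delta - \xi)$ then finishes the sandwich.

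Finally, since $\delta \mapsto \mathcal{M}_-(\delta)$ is non-increasing on $(0,1]$, it has at most countably many points of discontinuity, and in particular is left-continuous outside a countable set. At any such left-continuity point, sending $\xi \downarrow 0$ in \eqref{Eq:LowerUpperBound} forces $\mathcal{M}(\delta) = \mathcal{M}_-(\delta)$. I expect the main point requiring care to be the upper bound step: since the infimum defining $A$ is not in general attained, one must pass to an $\varepsilon$-approximate minimiser, and the slack $\xi$ in the statement is precisely what is needed in order to pick $\varepsilon$ small enough that the probability remains strictly below $\delta$.
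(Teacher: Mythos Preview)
Your proposal is correct and follows essentially the same approach as the paper's proof: both arguments establish the lower bound by showing that the supremum of quantiles for a fixed estimator lies in the set defining $\mathcal{M}_-(\delta)$, establish the upper bound by using the slack $\xi$ to extract a near-optimal estimator from the infimum defining $A$, and deduce the almost-everywhere equality from the monotonicity of $\mathcal{M}_-$. Your treatment of the lower bound is in fact slightly more streamlined than the paper's (which passes through an intermediate quantity $r(\hat\theta)$), and in the upper bound your use of $\xi/2$ to force a strict inequality is harmless but unnecessary---the weak inequality $\sup_{\theta,P_\theta} P_\theta\{L>r'\}\leq\delta$ already yields $\mathrm{Quantile}(1-\delta)\leq r'$.
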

The convenience of the lower bound in Theorem~\ref{lemma:important-lower-bound} arises from the fact that if 
\begin{align*}
    \inf_{\hat{\theta} \in \hat{\Theta}}\sup_{\theta\in\Theta} \sup_{P_{\theta} \in \mathcal{P}_{\theta}}\, P_{\theta} \bigl\{ L(\hat{\theta}, \theta) > r \bigr\} > \delta,
\end{align*}
then $\mathcal{M}(\delta) \geq \mathcal{M}_{-}(\delta) \geq r$. 

With these notions in place, we now establish high-probability versions of standard information-theoretic tools, beginning with a high-probability variant of Le Cam's two-point lemma.  
\begin{lemma}[High-probability Le Cam's two-point lemma]
\label{lemma:high-prob-le-cam-tv}
    Let $\delta\in(0,1/2)$, and suppose that $\theta_1,\theta_2 \in \Theta$, $P_{1} \in \mathcal{P}_{\theta_1}$ and $P_{2} \in \mathcal{P}_{\theta_2}$ satisfy $\mathrm{TV}(P_1, P_2) < 1-2\delta$. Then, writing $\eta \coloneqq d(\theta_1,\theta_2)/2$, we have $\mathcal{M}_{-}(\delta) \geq g(\eta)$.
\end{lemma}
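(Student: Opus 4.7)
The plan is to unpack the definition of $\mathcal{M}_-(\delta)$ and reduce to a classical two-point testing argument restricted to the pair $(P_1, P_2)$. It suffices to show that for every $r < g(\eta)$ and every $\hat\theta \in \hat\Theta$,
\[
\max_{i \in \{1,2\}} P_i\bigl\{L(\hat\theta,\theta_i) > r\bigr\} > \delta,
\]
for then, taking the supremum over $\Theta$ and $\mathcal{P}_\theta$ and the infimum over $\hat\theta$, the defining set in Definition~\ref{def:lower-minimax} contains no $r < g(\eta)$, and so $\mathcal{M}_-(\delta) \geq g(\eta)$. Fix such $r$ and $\hat\theta$, and set $A_i \coloneqq \{L(\hat\theta,\theta_i) \leq r\}$ for $i=1,2$.

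The first step is a geometric separation: I claim $A_1 \cap A_2 = \emptyset$. Indeed, if $\omega \in A_1 \cap A_2$, then $g\bigl(d(\hat\theta(\omega),\theta_i)\bigr) \leq r < g(\eta)$ for each $i$; since $g$ is non-decreasing, this forces $d(\hat\theta(\omega),\theta_i) < \eta$ for $i=1,2$, and the triangle inequality yields the contradiction
\[
d(\theta_1,\theta_2) \leq d\bigl(\hat\theta(\omega),\theta_1\bigr) + d\bigl(\hat\theta(\omega),\theta_2\bigr) < 2\eta = d(\theta_1,\theta_2).
\]
Hence $A_1 \subseteq A_2^c = \{L(\hat\theta,\theta_2) > r\}$.

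The second step applies the total-variation bound. Writing $p_i \coloneqq P_i\{L(\hat\theta,\theta_i) > r\}$ and using $|P_1(B) - P_2(B)| \leq \mathrm{TV}(P_1,P_2)$ with $B = A_1$, I obtain
\[
p_2 \geq P_2(A_1) \geq P_1(A_1) - \mathrm{TV}(P_1,P_2) = (1 - p_1) - \mathrm{TV}(P_1,P_2),
\]
so $p_1 + p_2 \geq 1 - \mathrm{TV}(P_1,P_2) > 2\delta$, and therefore $\max(p_1,p_2) > \delta$, as required.

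The only subtlety I anticipate is in the geometric separation step, where the strict inequality $r < g(\eta)$ is essential: because $g$ is merely non-decreasing (rather than strictly increasing), the weaker bound $r \leq g(\eta)$ would fail to force $d(\hat\theta(\omega),\theta_i) < \eta$, and the triangle inequality would only yield a non-strict contradiction. This is precisely why we establish the strict lower bound $> \delta$ uniformly over all $r < g(\eta)$ and then pass to the supremum threshold through the infimum in Definition~\ref{def:lower-minimax}, rather than trying to argue directly at $r = g(\eta)$.
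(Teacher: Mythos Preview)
Your proof is correct and follows essentially the same route as the paper's: both hinge on the disjointness of the events $\{L(\hat\theta,\theta_i)\le r\}$ for $r<g(\eta)$ together with the total-variation bound, giving the $\hat\theta$-independent lower bound $\max_i P_i\{L(\hat\theta,\theta_i)>r\}\ge \tfrac12\{1-\mathrm{TV}(P_1,P_2)\}>\delta$. The only cosmetic difference is that the paper packages the argument by applying the classical in-expectation Le Cam lemma (Lemma~\ref{lemma:le-cam}) to the indicator loss $\tilde g_a(x)=\mathbbm{1}_{\{g(x)>ag(\eta)\}}$ and then letting $a\uparrow 1$, whereas you carry out the two-point comparison directly.
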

In our applications in Section~\ref{sec:examples}, we will typically apply this lemma to two product distributions for which the total variation distance may be difficult to compute.  The following corollary provides a more user-friendly version of the above lemma, stated in terms of the Kullback--Leibler divergence.
\begin{cor}\label{cor:high-prob-le-cam-kl}
    Let $\delta\in(0,1/2)$, and suppose that $\theta_1,\theta_2 \in \Theta$, $P_{1} \in \mathcal{P}_{\theta_1}$ and $P_{2} \in \mathcal{P}_{\theta_2}$ satisfy $\mathrm{KL}(P_1, P_2) < \log\bigl(\frac{1}{4\delta(1-\delta)}\bigr)$. Then, writing $\eta \coloneqq d(\theta_1,\theta_2)/2$, we have $\mathcal{M}_{-}(\delta) \geq g(\eta)$.
\end{cor}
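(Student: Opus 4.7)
The plan is to deduce this corollary directly from Lemma~\ref{lemma:high-prob-le-cam-tv} by converting the Kullback--Leibler hypothesis into the total variation hypothesis required there. The bridge is the Bretagnolle--Huber inequality, which states that for any two probability measures $P_1,P_2$ on a common measurable space,
\[
\mathrm{TV}(P_1,P_2) \leq \sqrt{1 - e^{-\mathrm{KL}(P_1,P_2)}}.
\]
This is a standard auxiliary inequality (and is presumably already recorded in the Appendix of the paper), so I would simply invoke it.

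First, I would set $K \coloneqq \mathrm{KL}(P_1,P_2)$ and observe that the hypothesis $K < \log\bigl(\frac{1}{4\delta(1-\delta)}\bigr)$ rearranges, by exponentiating and negating, to $1 - e^{-K} < 1 - 4\delta(1-\delta)$. A short algebraic identity gives $1 - 4\delta(1-\delta) = (1-2\delta)^2$, and since $\delta \in (0,1/2)$, we have $1 - 2\delta > 0$, so taking square roots preserves the inequality and yields
\[
\sqrt{1 - e^{-K}} < 1 - 2\delta.
\]
Combining with Bretagnolle--Huber gives $\mathrm{TV}(P_1,P_2) < 1 - 2\delta$.

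Finally, I would apply Lemma~\ref{lemma:high-prob-le-cam-tv} to $\theta_1,\theta_2,P_1,P_2$ with the same $\eta = d(\theta_1,\theta_2)/2$ to conclude $\mathcal{M}_-(\delta) \geq g(\eta)$, which is the desired bound. There is no real obstacle here: the entire argument is a one-line conversion of KL into TV via Bretagnolle--Huber, followed by the trivial algebraic identity $1 - 4\delta(1-\delta) = (1-2\delta)^2$. The only thing worth double-checking is that $\delta < 1/2$ is precisely what allows the square root step to go through without reversing the inequality, which is exactly the range of $\delta$ for which Lemma~\ref{lemma:high-prob-le-cam-tv} is stated, so the hypothesis on $\delta$ is inherited consistently.
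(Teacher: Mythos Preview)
Your proposal is correct and follows essentially the same route as the paper: apply the Bretagnolle--Huber inequality to convert the KL hypothesis into $\mathrm{TV}(P_1,P_2) < 1-2\delta$, then invoke Lemma~\ref{lemma:high-prob-le-cam-tv}. The paper's proof is the same one-liner, simply omitting the explicit algebraic check that $1-4\delta(1-\delta)=(1-2\delta)^2$ which you spelled out.
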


\begin{proof}
    By the Bretagnolle--Huber inequality \citep{bretagnolle1979estimation}, 
    \begin{align*}
        \mathrm{TV}(P_1, P_2) \leq \sqrt{1 -  \exp\bigl\{ -\mathrm{KL}(P_1, P_2) \bigr\}} < 1-2\delta.
    \end{align*}
    The result then follows by Lemma~\ref{lemma:high-prob-le-cam-tv}.
\end{proof}
As we will see in Section~\ref{sec:examples}, in several examples, the minimax quantile decomposes into a sum of a term reflecting the dependence on the quantile level $\delta$, and another capturing the minimax risk (that is independent of $\delta$).  Lemma~\ref{lemma:high-prob-le-cam-tv} and Corollary~\ref{cor:high-prob-le-cam-kl} are effective for controlling the first of these terms.  On the other hand, our next two results are well-suited for the minimax risk term, with the first providing lower bounds on the minimax quantile via a specialisation of Fano's lemma.
\begin{lemma}\label{lemma:fano's-minimax-quantile}
    Let $M \geq 2$, $\theta_1,\ldots,\theta_M \in \Theta$, and $P_j \in \mathcal{P}_{\theta_j}$ for $j\in[M]$. Suppose that there exists $\epsilon \in(0,1]$ such that 
    \begin{align*}
        \frac{M^{-1} \inf_{Q\in\mathcal{Q}(\mathcal{X})} \sum_{j=1}^M \mathrm{KL}(P_{j},Q) + \log(2-M^{-1})}{\log M} \leq 1-\epsilon.
    \end{align*}
    Then, writing $\eta \coloneqq \min_{1\leq j < k \leq M} d(\theta_j, \theta_k)/2$, we have $\mathcal{M}_{-}(\delta) \geq g(\eta)$ for all $\delta\in(0,\epsilon)$.
\end{lemma}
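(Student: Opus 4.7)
The plan is to execute the classical Fano method, reducing estimation to multiple hypothesis testing and then applying a sharpened form of Fano's inequality, but tracking the probability of misidentification so as to control $\mathcal{M}_{-}(\delta)$ directly. Fix any $r \in [0, g(\eta))$ and any $\hat{\theta} \in \hat{\Theta}$. Draw $J$ uniformly at random from $[M]$ and, conditionally on $J = j$, sample $X \sim P_j$; associate to $\hat{\theta}$ the nearest-point test $\hat{J}(X) \coloneqq \argmin_{j \in [M]} d(\hat{\theta}(X), \theta_j)$ (breaking ties arbitrarily). Since $g$ is non-decreasing, the event $L(\hat{\theta}(X), \theta_J) \leq r < g(\eta)$ forces $d(\hat{\theta}(X), \theta_J) < \eta$, and the triangle inequality together with $d(\theta_j, \theta_k) \geq 2\eta$ for $j \neq k$ then forces $\hat{J}(X) = J$. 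This yields the containment $\{\hat{J} \neq J\} \subseteq \{L(\hat{\theta}(X), \theta_J) > r\}$, from which averaging over $J$ gives
\begin{align*}
    \sup_{j \in [M]} P_j\bigl(L(\hat{\theta}(X), \theta_j) > r\bigr) \geq \frac{1}{M}\sum_{j=1}^M P_j\bigl(L(\hat{\theta}(X), \theta_j) > r\bigr) \geq \mathbb{P}(\hat{J} \neq J) \eqqcolon p_e.
\end{align*}

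Next, I would apply the classical Fano inequality to the Markov chain $J \to X \to \hat{J}$, obtaining $H(p_e) + p_e \log(M-1) \geq H(J \mid X) = \log M - I(J;X)$, where $H$ denotes binary entropy. The representation $I(J;X) = M^{-1}\sum_j \mathrm{KL}(P_j, \bar{P})$ with $\bar{P} \coloneqq M^{-1}\sum_j P_j$, combined with the information-projection identity $\bar{P} = \argmin_{Q \in \mathcal{Q}(\mathcal{X})} M^{-1}\sum_j \mathrm{KL}(P_j, Q)$, then lets me rewrite the hypothesis of the lemma as $I(J;X) \leq (1-\epsilon)\log M - \log(2 - M^{-1})$, yielding
\begin{align*}
    H(p_e) + p_e \log(M-1) \geq \epsilon \log M + \log(2 - M^{-1}).
\end{align*}

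The crux of the argument, and what I expect will be the main obstacle, is to convert this into the desired bound $p_e \geq \epsilon$. The key analytic input is the sharpening of the trivial bound $H(p) \leq \log 2$ to
\begin{align*}
    H(p) + p\log(M-1) \leq p \log M + \log(2 - M^{-1}) \qquad \text{for all } p \in [0,1],
\end{align*}
which explains the somewhat unusual $\log(2-M^{-1})$ term in the hypothesis. I would verify this by studying $u(p) \coloneqq H(p) - p \log\{M/(M-1)\}$: it is strictly concave with unique interior maximiser at $p^* = (M-1)/(2M-1)$, and a direct computation gives $u(p^*) = \log(2-M^{-1})$, which is exactly the required bound. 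Given this inequality, the rest is immediate by strict monotonicity of $p \mapsto p\log M + \log(2-M^{-1})$: if $p_e < \epsilon$, then $H(p_e) + p_e \log(M-1) \leq p_e \log M + \log(2 - M^{-1}) < \epsilon \log M + \log(2 - M^{-1})$, contradicting the Fano lower bound. Hence $p_e \geq \epsilon$ uniformly in $\hat{\theta}$ and $r < g(\eta)$, so for every $\delta \in (0, \epsilon)$ and every $r < g(\eta)$ we have $\inf_{\hat{\theta}}\sup_{\theta}\sup_{P_\theta} P_\theta\bigl(L(\hat{\theta}(X), \theta) > r\bigr) \geq \epsilon > \delta$, whence $\mathcal{M}_-(\delta) \geq g(\eta)$ by Definition~\ref{def:lower-minimax}.
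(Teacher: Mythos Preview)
Your argument is correct and is essentially the same approach as the paper's: both reduce to Fano's inequality applied to an indicator-type loss, yielding the lower bound $\epsilon$ on the probability of large loss for every estimator, and hence $\mathcal{M}_-(\delta) \geq g(\eta)$ for $\delta < \epsilon$. The only differences are presentational. The paper invokes its pre-packaged Fano lemma (Lemma~\ref{lemma:fano}) with the modified increasing function $\tilde{g}_a(x) = \mathbbm{1}\{g(x) > a g(\eta)\}$ and lets $a \uparrow 1$, whereas you fix $r < g(\eta)$ directly and unpack Fano's inequality from scratch, including a verification of the sharpened bound $H(p) + p\log(M-1) \leq p\log M + \log(2 - M^{-1})$ that explains the appearance of the $\log(2-M^{-1})$ term in the hypothesis. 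Your route is more self-contained and illuminates where the constant comes from; the paper's is shorter because it defers all the Fano machinery to the appendix.
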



The second of our results demonstrates how to convert lower bounds on the minimax risk over bounded subsets of the parameter space into lower bounds on the lower minimax quantile. 
\begin{thm} \label{lemma:expectation-lb-to-high-prob-lb}
Let $\Theta_0 \subseteq \Theta$ be non-empty and suppose that $D\coloneqq \sup_{\theta_1,\theta_2\in\Theta_0} d(\theta_1,\theta_2)$ satisfies $g(D) > 0$. Let $\Delta \in [0,\infty)$ be such that 
    \begin{align*}
        \inf_{\hat{\theta} \in \hat{\Theta}} \sup_{\theta_0 \in \Theta_0} \sup_{P_{\theta_0} \in \mathcal{P}_{\theta_0}} \mathbb{E}_{P_{\theta_0}} L( \hat{\theta}, \theta_0) \geq \Delta.
    \end{align*}
    Then for every $\epsilon > 0$ and $\delta \in \bigl(0, \frac{\Delta - g(\epsilon  D)}{g((1+\epsilon )D)} \bigr)$, we have $\mathcal{M}_{-}(\delta) \geq g(\epsilon D)$.
\end{thm}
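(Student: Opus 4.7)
The plan is to argue by contradiction. Suppose for contradiction that $\mathcal{M}_{-}(\delta) < g(\epsilon D)$. Unfolding the definition of the lower minimax quantile, one then obtains a threshold $r \in [0, g(\epsilon D))$ and an estimator $\hat{\theta} \in \hat{\Theta}$ satisfying
\[
\sup_{\theta \in \Theta}\sup_{P_\theta \in \mathcal{P}_\theta} P_\theta \bigl\{ L(\hat{\theta}, \theta) > r \bigr\} \leq \delta
\]
(strictly speaking, up to an arbitrarily small slack $\xi > 0$ in $\delta$, which I would absorb at the end). From $\hat{\theta}$ I would construct a modified estimator $\tilde{\theta} \in \hat{\Theta}$ whose worst-case expected loss over $\Theta_0$ is strictly less than $\Delta$, directly contradicting the local minimax risk lower bound in the hypothesis and forcing $\mathcal{M}_-(\delta) \geq g(\epsilon D)$.

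The proposed construction is to truncate $\hat{\theta}$ to the closed $\epsilon D$-thickening of $\Theta_0$: fixing any reference point $\theta^* \in \Theta_0$, set
\[
\tilde{\theta}(X) \coloneqq \begin{cases} \hat{\theta}(X) & \text{if } \inf_{\theta' \in \Theta_0} d\bigl(\hat{\theta}(X), \theta'\bigr) \leq \epsilon D, \\ \theta^* & \text{otherwise.} \end{cases}
\]
Measurability of $\tilde{\theta}$ is immediate because $y \mapsto \inf_{\theta' \in \Theta_0} d(y, \theta')$ is $1$-Lipschitz in the pseudo-metric $d$, hence Borel-measurable. I would then verify two pointwise properties for each $\theta_0 \in \Theta_0$ and $P_{\theta_0} \in \mathcal{P}_{\theta_0}$. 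On the event $A \coloneqq \{L(\hat{\theta}, \theta_0) \leq r\}$, the monotonicity of $g$ together with $r < g(\epsilon D)$ force $d(\hat{\theta}, \theta_0) < \epsilon D$, so $d(\hat{\theta}, \Theta_0) < \epsilon D$, hence $\tilde{\theta} = \hat{\theta}$ on $A$ and $L(\tilde{\theta}, \theta_0) \leq r$ there. Unconditionally, $d(\tilde{\theta}, \theta_0) \leq (1+\epsilon) D$: when $\tilde{\theta} = \theta^*$ this is immediate from $\theta^*, \theta_0 \in \Theta_0$ and the diameter bound, and when $\tilde{\theta} = \hat{\theta}$ the triangle inequality $d(\hat{\theta}, \theta_0) \leq d(\hat{\theta}, \theta') + d(\theta', \theta_0) \leq d(\hat{\theta}, \theta') + D$, valid for every $\theta' \in \Theta_0$, yields $d(\hat{\theta}, \theta_0) \leq d(\hat{\theta}, \Theta_0) + D \leq (1+\epsilon) D$ after infimising in $\theta'$.

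Combining these pointwise bounds with the tail inequality $P_{\theta_0}(A^c) \leq \delta$ gives
\[
\mathbb{E}_{P_{\theta_0}} L(\tilde{\theta}, \theta_0) \leq r + \delta\, g\bigl((1+\epsilon) D\bigr) < g(\epsilon D) + \delta\, g\bigl((1+\epsilon) D\bigr),
\]
uniformly in $\theta_0$ and $P_{\theta_0}$. Because $g\bigl((1+\epsilon) D\bigr) \geq g(D) > 0$, the quotient $\bigl(\Delta - g(\epsilon D)\bigr)/g\bigl((1+\epsilon) D\bigr)$ is well-defined, and the hypothesis $\delta < \bigl(\Delta - g(\epsilon D)\bigr)/g\bigl((1+\epsilon) D\bigr)$ then forces the right-hand side to be strictly smaller than $\Delta$. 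This contradicts the assumed minimax risk lower bound applied to $\tilde{\theta} \in \hat{\Theta}$ and completes the proof.

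The main obstacle, and really the only design choice, is calibrating the clipping set so that two competing requirements are met at once: $\tilde{\theta}$ must coincide with $\hat{\theta}$ throughout the good event $A$ (to inherit the quantile guarantee at radius $\epsilon D$), while remaining uniformly within $(1+\epsilon) D$ of every element of $\Theta_0$ (to cap the tail contribution to the expected loss). Clipping to the $\epsilon D$-thickening of $\Theta_0$ itself is exactly what meets both demands with the constant $g\bigl((1+\epsilon) D\bigr)$ appearing in the statement; clipping to a ball around a fixed $\theta^*$ or projecting onto $\Theta_0$ would either fail to preserve $\hat{\theta}$ on $A$ or inflate the deterministic bound beyond $g\bigl((1+\epsilon) D\bigr)$. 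The remaining technicalities — measurability, the vanishing slack $\xi$ coming from the infima in the definitions not being attained, and the observation that non-strict monotonicity of $g$ still permits the inversion $g(d) \leq r < g(\epsilon D) \Rightarrow d < \epsilon D$ — are routine.
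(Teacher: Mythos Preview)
Your proposal is correct and follows essentially the same approach as the paper: both arguments clip an arbitrary estimator to the $\epsilon D$-neighbourhood of $\Theta_0$, use the resulting diameter bound $(1+\epsilon)D$ to cap the loss, and compare with the local minimax risk bound $\Delta$. The only cosmetic differences are that the paper argues directly rather than by contradiction, uses open rather than closed balls for the thickening, and places the vanishing slack $\xi$ on the threshold $r$ rather than on $\delta$.
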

In order to apply this theorem, we note that standard information-theoretic minimax risk lower bound techniques (including Assouad's lemma) proceed by constructing a carefully-chosen finite subset $\Theta_0$ of the parameter space $\Theta$, and hence provide local minimax risk lower bounds over $\Theta_0$; see Propositions~\ref{prop:mean-estimation} and~\ref{prop:isotonic-book}.  During the preparation of this manuscript, we became aware of the recent result of~\citet[][Lemma~6]{chhor2024generalized}, obtained independently, which provides a similar lower bound. 

The upper bound on $\delta$ in Theorem~\ref{lemma:expectation-lb-to-high-prob-lb} may be small in some cases; Proposition~\ref{cor:boost-delta} provides a way to relax this to any universal constant less than $1/2$.  Recall that a \emph{Polish space} is a separable topological space that may be equipped with a metric that generates the topology and under which the space is complete; for example, any open or closed subset of $\mathbb{R}^d$ is a Polish space.  
\begin{prop} \label{cor:boost-delta}
    For $m\in\mathbb{N}$, let $\mathcal{P}_{\theta}^{\otimes m} \coloneqq \{ P_{\theta}^{\otimes m} \in \mathcal{Q}(\mathcal{X}^m) : P_{\theta} \in \mathcal{P}_{\theta} \}$ and $\mathcal{P}_{\Theta}^{\otimes m} \coloneqq \{\mathcal{P}_{\theta}^{\otimes m} : \theta\in\Theta\}$. Suppose that $\Theta$ is a non-empty Polish space, that $g$ is continuous  
    and that there exists $A>0$ such that $L(\theta_1,\theta_2) \leq A\bigl\{ L(\theta_1,\theta_3) + L(\theta_2,\theta_3) \bigr\}$ for all $\theta_1,\theta_2,\theta_3\in\Theta$.  
    Define $h:[0,1] \rightarrow [0,1]$ by $h(x) \coloneqq x^3+3x^2(1-x)$ and for $m\in\mathbb{N}$, let $h^{\circ m}$ denote the function obtained by composing $h$ with itself $m$ times.  Let $\delta_-\in(0,1/2)$ and $\delta_+\in(\delta_-,1/2)$.  Then there exists $k \equiv k(\delta_-,\delta_+)\in\mathbb{N}$ such that $h^{\circ k}(\delta_+) \leq \delta_-$. Moreover, for any such $k$, we have for $\delta \in (0,\delta_+]$ that
    \begin{align*}
        \mathcal{M}_{-}(\delta,\mathcal{P}_{\Theta},L) \geq \frac{1}{(2A)^k} \cdot \mathcal{M}_{-}\bigl(\delta_-, \mathcal{P}_{\Theta}^{\otimes 3^k},L\bigr).
    \end{align*}
\end{prop}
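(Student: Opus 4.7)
The plan is to iterate a single-step confidence-boosting procedure that trades three independent samples for a reduction in failure probability from $x$ to $h(x)$, at the cost of expanding the $L$-radius by a factor of $2A$. After $k$ iterations, any estimator $\hat{\theta}:\mathcal{X}\to\Theta$ with $P_\theta\{L(\hat{\theta},\theta)>r\}\leq \delta_+$ becomes an estimator $\tilde{\theta}_k:\mathcal{X}^{3^k}\to\Theta$ with $P_\theta^{\otimes 3^k}\{L(\tilde{\theta}_k,\theta)>(2A)^k r\}\leq h^{\circ k}(\delta_+)\leq \delta_-$; feeding this back through the definition of $\mathcal{M}_-$ and its monotonicity in $\delta$ then gives the claim.

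To see that a suitable $k$ exists, a direct factorisation gives $x-h(x)=x(1-x)(1-2x)>0$ for $x\in(0,1/2)$, so $h$ has only $0$ and $1/2$ as fixed points in $[0,1/2]$. Consequently $\{h^{\circ k}(\delta_+)\}_{k\geq 0}$ is strictly decreasing with limit $0$, and $h^{\circ k}(\delta_+)\leq\delta_-$ for all sufficiently large $k$.

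The crux is the boosting step. Given $\hat{\theta}\in\hat{\Theta}$ and data $(X_1,X_2,X_3)\in\mathcal{X}^3$, write $\hat{\theta}_i:=\hat{\theta}(X_i)$ and define the set-valued map
\[
S(X_1,X_2,X_3) := \bigl\{\theta'\in\Theta : \#\{i\in[3] : L(\theta',\hat{\theta}_i)\leq r\}\geq 2\bigr\}.
\]
Because $g$ is continuous and $L=g\circ d$ is jointly continuous on $\Theta\times\Theta$, each sublevel set $\{\theta':L(\theta',\hat{\theta}_i)\leq r\}$ is closed, so $S$ is a closed-valued multifunction whose graph is the preimage of a closed set under the measurable map $X\mapsto(\hat{\theta}_1,\hat{\theta}_2,\hat{\theta}_3)$. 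Since $\Theta$ is Polish, the Kuratowski--Ryll-Nardzewski selection theorem supplies a Borel-measurable $\tilde{\theta}:\mathcal{X}^3\to\Theta$ with $\tilde{\theta}(X)\in S(X)$ whenever $S(X)\neq\emptyset$, and any fixed Borel default otherwise. Now fix $\theta\in\Theta$ and $P_\theta\in\mathcal{P}_\theta$ with $P_\theta\{L(\hat{\theta},\theta)>r\}\leq x$. Under $P_\theta^{\otimes 3}$, the event $E$ that at least two of the $\hat{\theta}_i$ lie within $L$-distance $r$ of $\theta$ has probability at least $1-h(x)$ (by independence and monotonicity of $p\mapsto\Pr(\geq 2\text{ successes among three independent } \mathrm{Bernoulli}(p))$). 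On $E$ the truth $\theta$ itself belongs to $S(X)$, so $S(X)\neq\emptyset$ and $\tilde{\theta}(X)\in S(X)$ yields two distinct indices $i_1,i_2$ with $L(\tilde{\theta},\hat{\theta}_{i_j})\leq r$, at least one of which (say $i^*$) satisfies $L(\hat{\theta}_{i^*},\theta)\leq r$ by pigeonhole. The quasi-triangle inequality then delivers $L(\tilde{\theta},\theta)\leq A\{L(\tilde{\theta},\hat{\theta}_{i^*})+L(\hat{\theta}_{i^*},\theta)\}\leq 2Ar$, whence $P_\theta^{\otimes 3}\{L(\tilde{\theta},\theta)>2Ar\}\leq h(x)$.

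Iterating this step $k$ times produces $\tilde{\theta}_k$ on $\mathcal{X}^{3^k}$ with failure probability $\leq h^{\circ k}(x)$ at radius $(2A)^k r$. For any $r>\mathcal{M}_-(\delta_+,\mathcal{P}_\Theta,L)$ and $\eta>0$, the definition of $\mathcal{M}_-$ furnishes some $\hat{\theta}$ with $\sup_{\theta,P_\theta}P_\theta\{L>r\}<\delta_++\eta$, so $\tilde{\theta}_k$ has failure probability $<h^{\circ k}(\delta_++\eta)$ at radius $(2A)^k r$. Continuity of $h^{\circ k}$ gives $h^{\circ k}(\delta_++\eta)\downarrow h^{\circ k}(\delta_+)\leq\delta_-$ as $\eta\downarrow 0$; combined with the right-continuity of $\mathcal{M}_-(\cdot,\mathcal{P}_\Theta^{\otimes 3^k},L)$ (which holds because it is a generalised inverse of a monotone function) and then letting $r\downarrow\mathcal{M}_-(\delta_+,\mathcal{P}_\Theta,L)$, one concludes that $\mathcal{M}_-(\delta_-,\mathcal{P}_\Theta^{\otimes 3^k},L)\leq(2A)^k\mathcal{M}_-(\delta_+,\mathcal{P}_\Theta,L)$. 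The stated bound then follows since $\mathcal{M}_-(\delta,\mathcal{P}_\Theta,L)\geq\mathcal{M}_-(\delta_+,\mathcal{P}_\Theta,L)$ for $\delta\leq\delta_+$. The main obstacle is the Borel measurable selection from the multifunction $S$, which motivates the Polish-space and continuous-$g$ hypotheses; a minor secondary subtlety is carefully tracking strict versus non-strict inequalities when passing to the limit at $\delta_-$.
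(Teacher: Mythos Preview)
Your approach is essentially the paper's: prove a one-step boosting lemma (triple the sample, reduce failure from $x$ to $h(x)$, inflate the radius by $2A$) via measurable selection of an ``approximate majority'' point, then iterate $k$ times and use monotonicity in $\delta$.

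There is, however, a genuine technical gap in your selection step. You define $S$ via the \emph{closed} sublevel sets $\{\theta':L(\theta',\hat{\theta}_i)\leq r\}$ and argue that the graph of $S$ is measurable, then invoke Kuratowski--Ryll-Nardzewski. But KRN requires \emph{weak} measurability (lower inverses of open sets are measurable), and for a general measurable domain a measurable graph only guarantees that lower inverses lie in the $\sigma$-algebra generated by analytic sets, not in the original $\sigma$-algebra. The paper avoids this by working instead with \emph{strict} inequalities $\{\theta':L(\tilde{\theta}(x_\ell),\theta')<r\}$: since $L$ is a Carath\'eodory function, the correspondence with open-set values is measurable (Aliprantis--Border, Lemma~18.7), hence weakly measurable, and so is its closure correspondence (their Lemma~18.3), which then admits a selector by KRN. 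The price is that after selecting $\hat{\theta}$ from the closure one only gets $L(\tilde{\theta}(x_k),\hat{\theta})\leq r$ rather than $<r$, but this is harmless for the quasi-triangle step. Your argument is repaired the same way.

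On the limiting step: your use of an auxiliary $\eta>0$ and right-continuity of $\delta\mapsto\mathcal{M}_-(\delta)$ is correct and is arguably more careful than needed. The paper instead puts all the slack on the radius side, asserting directly the existence of $\tilde{\theta}$ with $P_\theta\{L(\tilde{\theta},\theta)<\mathcal{M}_-(\delta)+\epsilon\}\geq 1-\delta$, boosting, and letting $\epsilon\downarrow 0$ at the end; this yields $\mathcal{M}_-(h(\delta),\mathcal{P}_\Theta^{\otimes 3},L)\leq 2A\,\mathcal{M}_-(\delta,\mathcal{P}_\Theta,L)$ in one clean inequality per iteration, with no appeal to right-continuity.
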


\section{Examples} \label{sec:examples}

With the general techniques of Section~\ref{sec:setup} now in place, the aim of this section is to determine expressions for the minimax quantiles of loss functions in several examples.  In each of the statements to follow, we will consider quantile levels $\delta \leq 1/4$; this value is chosen simply for convenience and could be relaxed to any constant less than $1/2$ by, e.g., a further application of Proposition~\ref{cor:boost-delta}.  Throughout this section, we will slightly abuse notation when the loss is of the form $L(\theta, \theta') = g(\| \theta - \theta' \|)$ for some norm $\|\cdot\|$, by writing $\mathcal{M}\bigl(\delta, \mathcal{P}_{\Theta}, g(\| \cdot \|)\bigr)$ in place of $\mathcal{M}(\delta, \mathcal{P}_{\Theta}, L)$.

\subsection{Multivariate mean estimation} \label{sec:mean-estimation}

For this basic problem, we determine the minimax quantile in three settings, one for each of the next subsections.
  
\subsubsection{Gaussian mean estimation in squared Euclidean norm}

Proposition~\ref{prop:mean-estimation} below recovers the minimax $(1 - \delta)$th quantile in multivariate Gaussian mean estimation, up to multiplicative universal constants.  The matching upper bound is attained by the sample mean, which is a $\delta$-independent estimator.  This result therefore demonstrates the optimality of the sample mean in Gaussian mean estimation in a much stronger manner than usual minimax risk lower bounds. 
\begin{prop}\label{prop:mean-estimation}
Let $n,d\in\mathbb{N}$, $\delta \in (0, 1/4]$, $\Sigma \in \mathcal{S}_{++}^{d \times d}$, $\Theta \coloneqq \mathbb{R}^d$ and $\mathcal{P}_{\theta} \coloneqq \{ \mathsf{N}_d(\theta, \Sigma)^{\otimes n} \}$ for $\theta \in \Theta$.  Then 
\[
\mathcal{M}(\delta, \mathcal{P}_{\Theta}, \| \cdot \|_2^2) \asymp \frac{\tr(\Sigma)}{n} + \frac{ \| \Sigma \|_{\mathrm{op}} \log(1/\delta)}{n}. 
\]
\end{prop}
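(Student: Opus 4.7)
The plan is to establish matching upper and lower bounds, noting that the two terms $\tr(\Sigma)/n$ and $\|\Sigma\|_{\mathrm{op}}\log(1/\delta)/n$ arise from different mechanisms and may be handled independently.

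For the \textbf{upper bound}, the natural ($\delta$-independent) candidate is the sample mean $\bar X_n \coloneqq n^{-1}\sum_{i=1}^n X_i \sim \mathsf{N}_d(\theta,\Sigma/n)$. Writing $\bar X_n - \theta = n^{-1/2}\Sigma^{1/2}Z$ for $Z \sim \mathsf{N}_d(0,I_d)$, a Hanson--Wright (or Laurent--Massart) tail bound for Gaussian quadratic forms gives, with probability at least $1-\delta$,
\[
\|\bar X_n - \theta\|_2^2 \lesssim \frac{\tr(\Sigma)}{n} + \frac{\sqrt{\tr(\Sigma^2)\log(1/\delta)}}{n} + \frac{\|\Sigma\|_{\mathrm{op}}\log(1/\delta)}{n}.
\]
Since $\tr(\Sigma^2) \leq \tr(\Sigma)\,\|\Sigma\|_{\mathrm{op}}$, the middle term is dominated by the sum of the other two via AM--GM, yielding $\mathcal{M}(\delta) \lesssim \tr(\Sigma)/n + \|\Sigma\|_{\mathrm{op}}\log(1/\delta)/n$.

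For the \textbf{lower bound}, the two summands are captured separately and combined using $a+b \leq 2\max(a,b)$. To obtain the $\|\Sigma\|_{\mathrm{op}}\log(1/\delta)/n$ term, I would apply Corollary~\ref{cor:high-prob-le-cam-kl} with $\theta_1 = 0$ and $\theta_2 = \eta v$, where $v \in \mathbb{S}^{d-1}$ is a top eigenvector of $\Sigma$. For the product Gaussian measures, $\mathrm{KL}(\mathsf{N}_d(\theta_1,\Sigma)^{\otimes n}, \mathsf{N}_d(\theta_2,\Sigma)^{\otimes n}) = n\eta^2/(2\|\Sigma\|_{\mathrm{op}})$, so choosing $\eta^2 \asymp \|\Sigma\|_{\mathrm{op}}\log(1/\delta)/n$ (which for $\delta \leq 1/4$ stays below $\log(1/(4\delta(1-\delta)))$ up to a universal constant) yields $\mathcal{M}_-(\delta) \geq g(\eta/2) = \eta^2/4 \gtrsim \|\Sigma\|_{\mathrm{op}}\log(1/\delta)/n$.

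To obtain the $\tr(\Sigma)/n$ term, I would feed a standard local minimax risk lower bound into Theorem~\ref{lemma:expectation-lb-to-high-prob-lb}. Diagonalising $\Sigma$ and setting $\Theta_0 \coloneqq \prod_{j=1}^d \{0, 2\tau_j\}$ with $\tau_j \coloneqq c\sqrt{\sigma_j/n}$ for a sufficiently small universal constant $c > 0$, a classical Assouad argument gives $\inf_{\hat\theta}\sup_{\theta\in\Theta_0}\mathbb{E}_\theta\|\hat\theta - \theta\|_2^2 \geq \Delta \asymp \tr(\Sigma)/n$, while the diameter satisfies $D^2 \asymp \tr(\Sigma)/n$, so $\Delta/g(D)$ is a universal constant in $(0,1]$. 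Picking $\epsilon$ a small enough universal constant that $g(\epsilon D) \leq \Delta/2$ then yields $\mathcal{M}_-(\delta) \gtrsim \tr(\Sigma)/n$ for $\delta$ up to some universal constant $\delta_\star \in (0,1/2)$. If $\delta_\star < 1/4$, I would invoke Proposition~\ref{cor:boost-delta} with $A = 2$ (valid since $\|a-b\|_2^2 \leq 2(\|a-c\|_2^2 + \|b-c\|_2^2)$) to extend the bound to all $\delta \in (0,1/4]$, at the cost only of a universal multiplicative factor.

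The main obstacle is confirming that the boost via Proposition~\ref{cor:boost-delta} does not disturb the rate: it invokes $\mathcal{M}_-$ over $\mathcal{P}_\Theta^{\otimes 3^k}$, corresponding to $3^k n$ i.i.d.\ observations, which rescales the bound $\tr(\Sigma)/n$ by the universal constant $3^k$, and hence is absorbed. Combining the two lower bounds using $\mathcal{M}(\delta) \geq \mathcal{M}_-(\delta)$ (Theorem~\ref{lemma:important-lower-bound}) completes the argument.
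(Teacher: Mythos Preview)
Your proposal is correct and follows essentially the same route as the paper's own proof: Corollary~\ref{cor:high-prob-le-cam-kl} with a top-eigenvector perturbation for the $\|\Sigma\|_{\mathrm{op}}\log(1/\delta)/n$ term, Assouad's lemma over a hypercube in eigencoordinates fed into Theorem~\ref{lemma:expectation-lb-to-high-prob-lb} (and then Proposition~\ref{cor:boost-delta}) for the $\tr(\Sigma)/n$ term, and the sample mean for the upper bound. The only genuine difference is that you handle the upper bound via a Hanson--Wright/Laurent--Massart bound on the quadratic form $Z^\top\Sigma Z$, whereas the paper uses the Borell--TIS inequality applied to the Gaussian supremum $\|\bar X_n-\theta\|_2=\sup_{u\in\mathbb{S}^{d-1}}u^\top(\bar X_n-\theta)$; both are standard and yield the same rate.
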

\begin{proof}
Let $v_1,\ldots,v_d \in \mathbb{R}^{d}$ denote orthonormal eigenvectors of $\Sigma$ with corresponding eigenvalues $\lambda_1 \geq \ldots \geq \lambda_d$.  We capture each of the two additive terms separately, starting with the $\delta$-dependent term.  To this end, take $\theta_1 \coloneqq 0$, $\theta_2 \coloneqq \sqrt{\frac{\lambda_1}{n}\log\bigl(\frac{1}{4\delta(1-\delta)}\bigr)}\, v_1$ and set $P_1 = \mathsf{N}_d(\theta_1, \Sigma)^{\otimes n}$ and $P_2 = \mathsf{N}_d (\theta_2, \Sigma )^{\otimes n}$.  Then 
\[
\mathrm{KL}(P_1, P_2) = n \mathrm{KL}\bigl(\mathsf{N}_d(\theta_1, \Sigma), \mathsf{N}_d(\theta_2, \Sigma)\bigr) = \frac{1}{2}\log\Bigl(\frac{1}{4\delta(1-\delta)}\Bigr) < \log\Bigl(\frac{1}{4\delta(1-\delta)}\Bigr).
\]
Thus, applying Theorem~\ref{lemma:important-lower-bound} in conjunction with Corollary~\ref{cor:high-prob-le-cam-kl}, we deduce that for $\delta \in (0,1/4]$,
\begin{align}\label{ineq:mean-est-high-prob-lb}
\mathcal{M}(\delta, \mathcal{P}_{\Theta}, \| \cdot \|_2^2) \geq \mathcal{M}_{-}(\delta, \mathcal{P}_{\Theta}, \| \cdot \|_2^2) \geq \frac{\| \Sigma \|_{\mathrm{op}} \log\bigl(\frac{1}{4\delta(1-\delta)}\bigr)}{4n} \geq \frac{\| \Sigma \|_{\mathrm{op}} \log(1/\delta)}{20n}.
\end{align}
We turn now to the other term in the lower bound.  For $\theta,\theta' \in \mathbb{R}^d$, we have 
\[
\|\theta - \theta'\|_2^2 = \biggl \| \sum_{j=1}^{d} \bigl\{(v_j^{\top}\theta)  v_j -  (v_j^{\top}\theta') v_j\bigr\} \biggr\|_2^2 = \sum_{j=1}^{d} \bigl\{v_j ^{\top}(\theta - \theta') \bigr\}^2 = \sum_{j=1}^{d} g\bigl(\rho_j(\theta, \theta')\bigr),
\]
where we defined the pseudo-metrics $\rho_j:\Theta \times \Theta \rightarrow [0,\infty)$ by  $\rho_j(\theta, \theta) \coloneqq \lvert v_j^\top (\theta - \theta') \rvert$ for $j \in [d]$ and the increasing function $g:[0,\infty) \rightarrow [0,\infty)$ by $g(x) \coloneqq x^2$.  Now let $\Phi \coloneqq \{0, 1\}^{d}$ and for $j \in [d]$ set $a_j \coloneqq \frac{4}{3} \sqrt{\frac{\lambda_j}{n}}$.  For $\phi = (\phi_1,\ldots,\phi_d)^\top \in \Phi$, take $\theta_{\phi} \coloneqq \sum_{j =1}^{d} a_j \phi_j v_j$ and let $P_{\phi} \coloneqq \mathsf{N}_d(\theta_{\phi}, \Sigma)^{\otimes n}$. Define $\Theta_0 \coloneqq \{\theta_{\phi} : \phi\in\Phi\} \subseteq \Theta$ with diameter $D\coloneqq \frac{4}{3} \sqrt{\frac{\tr(\Sigma)}{n}}$.  For $\phi, \phi' \in \Phi$ that differ only in the $j$th coordinate, we have $\rho_j(\theta_{\phi}, \theta_{\phi'}) = \lvert a_j \rvert$ and, by Pinsker's inequality~\citep[Lemma 15.2]{wainwright2019high}, $\mathrm{TV}(P_{\phi}, P_{\phi'}) \leq \bigl\{\frac{n}{2} \mathrm{KL}\bigl(\mathsf{N}_d(\theta_{\phi}, \Sigma), \mathsf{N}_d(\theta_{\phi'}, \Sigma)\bigr)\bigr\}^{1/2} = 2/3$.  Thus, by Assouad's lemma (Lemma~\ref{lemma:assouad}), 
\[
\inf_{\hat{\theta} \in \hat{\Theta}} \max_{\theta_0 \in \Theta_0} \sup_{P_{\theta_0} \in \mathcal{P}_{\theta_0}} \mathbb{E}_{P_{\theta_0}} \bigl( \| \hat{\theta} - \theta_0 \|_2^2 \bigr) \geq \frac{4 \tr(\Sigma)}{27n}.
\]
Taking $\epsilon = 3/40$ in Theorem~\ref{lemma:expectation-lb-to-high-prob-lb}, we deduce for $\delta \in (0,1/15]$ that
\begin{align}\label{ineq:mean-est-assouad-lb}
\mathcal{M}_{-}(\delta, \mathcal{P}_{\Theta}, \| \cdot \|_2^2) \geq g(\epsilon D) =  \frac{\tr(\Sigma)}{100n}.
\end{align}
Now, recalling the function $h$ from Proposition~\ref{cor:boost-delta}, since $h\bigl(h(1/4)\bigr)<1/15$, we may apply Proposition~\ref{cor:boost-delta} with $k=2$ and~\eqref{ineq:mean-est-assouad-lb} to deduce that for any $\delta \in (0,1/4]$,
\begin{align} 
\label{eq:mean-est-boosted}
\mathcal{M}_-(\delta, \mathcal{P}_{\Theta}, \| \cdot \|_2^2) \geq \frac{\tr(\Sigma)}{2^6 \cdot 3^2 \cdot 5^2 \cdot n}.
\end{align}
Combining Theorem~\ref{lemma:important-lower-bound} with~\eqref{ineq:mean-est-high-prob-lb} and~\eqref{eq:mean-est-boosted} therefore yields that for $\delta \in (0,1/4]$,
\begin{align*}
\mathcal{M}(\delta, \mathcal{P}_{\Theta}, \| \cdot \|_2^2) \geq \mathcal{M}_-(\delta, \mathcal{P}_{\Theta}, \| \cdot \|_2^2) \geq \frac{\tr(\Sigma)}{2^7 \cdot 3^2 \cdot 5^2 \cdot n} + \frac{\| \Sigma \|_{\mathrm{op}} \log(1/\delta)}{40n}.
\end{align*}

For the upper bound, consider the sample mean as an estimator.  Set $\bar{X} \coloneqq n^{-1} \sum_{i=1}^n X_i$, where $X_1,\ldots,X_n \stackrel{\mathrm{iid}}{\sim} \mathsf{N}(\theta,\Sigma)$, and note that by Jensen's inequality, $\mathbb{E}(\|\bar{X}-\theta\|_2) \leq \sqrt{\tr(\Sigma)/n}$.  Moreover, $\|\bar{X} - \theta\|_2 = \sup_{u\in\mathbb{S}^{d-1}} u^\top (\bar{X} - \theta)$, so using the fact that $2(a+b) \geq (a^{1/2} + b^{1/2})^2$ for $a,b \geq 0$ by another application of Jensen's inequality, as well as the Borell--TIS inequality~\citep[see, e.g.,][Theorem 2.1.1]{adler2009random}, we have for any $\delta \in (0,1]$ that
\begin{align*}
\mathbb{P}\biggl(\|\bar{X} - \theta\|_2^2 > \frac{2\tr(\Sigma)}{n} &+ \frac{4\|\Sigma\|_{\mathrm{op}}\log(1/\delta)}{n}\biggr) \\
&\leq \mathbb{P}\biggl(\|\bar{X} - \theta\|_2 > \mathbb{E}(\|\bar{X}-\theta\|_2) + \sqrt{\frac{2\|\Sigma\|_{\mathrm{op}}\log(1/\delta)}{n}}\biggr) \leq \delta,
\end{align*}
as required.
\end{proof} 

\subsubsection{Robust mean estimation in squared Euclidean norm}

Recent literature has studied high-probability upper bounds for mean estimation with heavy-tailed data (see, e.g., the review article of~\citet{lugosi2019mean} and the references therein), or more generally, data from the Huber $\varepsilon$-contamination model~\citep[see, e.g.,][]{lugosi21robust,depersin2022robust}.  These  upper bound are attained using $\delta$-dependent estimators.  As a corollary of Proposition~\ref{prop:mean-estimation}, we provide a matching lower bound on the minimax $(1 - \delta)$th quantile up to universal constants in these settings.  
\begin{cor} \label{cor:huber-mean}
    Let $n,d\in\mathbb{N}$, $\varepsilon\in [0,1)$ $\Sigma \in \mathcal{S}_{++}^{d \times d}$, $\Theta = \mathbb{R}^d$ and 
    \[
    \mathcal{P}_{\theta} \coloneqq \Bigl\{R^{\otimes n}:\, R \in \mathcal{Q}(\mathbb{R}^d),\, R = (1-\varepsilon)P + \varepsilon Q, \, \mathbb{E}_P(X) = \theta,\, \mathrm{Cov}_{P}(X) = \Sigma,\, Q\in\mathcal{Q}(\mathbb{R}^d) \Bigr\}.
    \]
    There exists a universal constant $c>0$ such that if $e^{-cn(1-300\varepsilon)} \leq \delta \leq 1/4$, then
\[
\mathcal{M}(\delta, \mathcal{P}_{\Theta}, \| \cdot \|_2^2) \asymp \frac{\tr(\Sigma)}{n} + \| \Sigma \|_{\mathrm{op}}\varepsilon + \frac{ \| \Sigma \|_{\mathrm{op}} \log(1/\delta)}{n}. 
\]
\end{cor}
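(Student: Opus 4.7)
The plan is to combine a matching upper bound from the robust mean estimation literature with a three-part lower bound, one summand at a time. For the upper bound I would invoke an established robust estimator for the Huber $\varepsilon$-contamination model (such as the trimmed mean of~\citet{lugosi21robust} or the construction of~\citet{depersin2022robust}), which attains $\|\hat\theta-\theta\|_2^2 \lesssim \tr(\Sigma)/n + \|\Sigma\|_{\mathrm{op}}\varepsilon + \|\Sigma\|_{\mathrm{op}}\log(1/\delta)/n$ with $R$-probability at least $1-\delta$, uniformly over $R\in\mathcal{P}_\theta$. The hypothesis $\delta \geq e^{-cn(1-300\varepsilon)}$ specifies precisely the regime in which this bound is meaningful; in particular it forces $\varepsilon < 1/300$ and keeps $\log(1/\delta)/n$ dominated by a constant multiple of $1-\varepsilon$, which is what such estimators require.

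For the lower bound, observe first that $\mathsf{N}_d(\theta,\Sigma)^{\otimes n}\in\mathcal{P}_\theta$ for every $\varepsilon\in[0,1)$, obtained by taking $P=Q=\mathsf{N}_d(\theta,\Sigma)$ in the mixture. Hence $\mathcal{P}_\Theta$ contains the Gaussian class of Proposition~\ref{prop:mean-estimation}, which directly supplies $\mathcal{M}(\delta,\mathcal{P}_\Theta,\|\cdot\|_2^2) \gtrsim \tr(\Sigma)/n + \|\Sigma\|_{\mathrm{op}}\log(1/\delta)/n$ for $\delta\in(0,1/4]$. For the remaining contamination term $\|\Sigma\|_{\mathrm{op}}\varepsilon$, I would apply the two-point high-probability Le Cam lemma (Lemma~\ref{lemma:high-prob-le-cam-tv}). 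Let $v_1$ denote a top eigenvector of $\Sigma$, set $\sigma^2 \coloneqq \|\Sigma\|_{\mathrm{op}}$ and $\Sigma_\perp \coloneqq \Sigma - \sigma^2 v_1 v_1^\top$, and take $\mu_* \coloneqq c_0\sigma/(2\sqrt{\varepsilon})$, $\tau^2 \coloneqq \sigma^2/(1-\varepsilon) - \varepsilon\mu_*^2$, $\theta_1 \coloneqq \varepsilon\mu_* v_1$ and $\theta_2 \coloneqq -\varepsilon\mu_* v_1$ for a sufficiently small universal constant $c_0 > 0$, giving $\|\theta_1 - \theta_2\|_2^2 = c_0^2 \sigma^2 \varepsilon$. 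In the eigenbasis of $\Sigma$, I define
\begin{align*}
    P_i \coloneqq \bigl\{(1-\varepsilon)\mathsf{N}(0,\tau^2) + \varepsilon\delta_{(-1)^{i-1}\mu_*}\bigr\}\otimes\mathsf{N}_{d-1}(0,\Sigma_\perp), \qquad i\in\{1,2\}.
\end{align*}

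A routine moment calculation confirms $\mathbb{E}_{P_i}X = \theta_i$ and $\mathrm{Cov}_{P_i}X = \Sigma$, while the atoms being displaced by only a translation yields $\mathrm{TV}(P_1,P_2) = \varepsilon \leq \varepsilon/(1-\varepsilon)$. The subprobability density $\max\{(1-\varepsilon)p_1,(1-\varepsilon)p_2\}$ integrates to $1-\varepsilon^2$; topping it up with the remaining $\varepsilon^2$ mass from any auxiliary probability measure $\nu$ on $\mathbb{R}^d$ produces an honest distribution $R$ that dominates $(1-\varepsilon)P_i$ pointwise, so $Q_i \coloneqq \varepsilon^{-1}\bigl(R-(1-\varepsilon)P_i\bigr)$ is a valid probability measure and $R = (1-\varepsilon)P_i + \varepsilon Q_i \in \mathcal{P}_{\theta_i}$ for each $i$. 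Consequently $R^{\otimes n}$ lies in both $\mathcal{P}_{\theta_1}$ and $\mathcal{P}_{\theta_2}$, forcing $\mathrm{TV}(R^{\otimes n},R^{\otimes n}) = 0 < 1-2\delta$ for every $\delta\in(0,1/2)$; Lemma~\ref{lemma:high-prob-le-cam-tv} then yields $\mathcal{M}_-(\delta) \geq c_0^2\sigma^2\varepsilon/4$, and Theorem~\ref{lemma:important-lower-bound} combines this with the Gaussian-induced contributions to complete the lower bound. The main technical care lies in the bookkeeping of this two-point construction: one must allocate the variance budget between the Gaussian core of variance $\tau^2$ and the shifted atoms at $\pm\mu_*$ so that the covariance is exactly (not merely $\preceq$) $\Sigma$, while simultaneously verifying that $\tau^2 > 0$ (which reduces to $c_0^2 < 4/(1-\varepsilon)$ and thus holds uniformly over $\varepsilon\in[0,1)$ for, say, $c_0 \leq 2$) and keeping the TV distance below the threshold $\varepsilon/(1-\varepsilon)$.
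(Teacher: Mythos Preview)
Your proposal is correct and follows essentially the same approach as the paper: embed the Gaussian model to inherit the $\tr(\Sigma)/n$ and $\|\Sigma\|_{\mathrm{op}}\log(1/\delta)/n$ terms via Proposition~\ref{prop:mean-estimation}, and for the $\|\Sigma\|_{\mathrm{op}}\varepsilon$ term build two clean distributions with identical covariance $\Sigma$, means separated by order $\sqrt{\|\Sigma\|_{\mathrm{op}}\varepsilon}$, and total variation at most $\varepsilon/(1-\varepsilon)$, so that a single contaminated law $R$ sits in both Huber balls and Lemma~\ref{lemma:high-prob-le-cam-tv} applies with zero TV. The only cosmetic differences are that the paper uses a three-atom construction in the leading eigendirection rather than your Gaussian-plus-Dirac mixture, and packages the common-$R$ argument into a reusable modulus-of-continuity lemma (Lemma~\ref{thm:general-lower-bound-huber}) rather than carrying it out inline.
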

\begin{proof}
    We first consider the lower bound.  When $\varepsilon = 0$, the claimed lower bound follows from Proposition~\ref{prop:mean-estimation}, so without loss of generality we assume that $\varepsilon > 0$.  First, since $\mathsf{N}_d(\theta,\Sigma)^{\otimes n} \in \mathcal{P}_{\theta}$, we have by Proposition~\ref{prop:mean-estimation} that for $\delta \in (0,1/4]$,
    \begin{align} \label{eq:huber-lb-first-term}
        \mathcal{M}(\delta, \mathcal{P}_{\Theta}, \| \cdot \|_2^2)  \gtrsim \frac{\tr(\Sigma)}{n} + \frac{ \| \Sigma \|_{\mathrm{op}} \log(1/\delta)}{n}. 
    \end{align}
    By the spectral theorem, $\Sigma = V \Lambda V^{\top} \in \mathbb{R}^{d \times d}$ where $\Lambda \coloneqq \mathrm{diag}(\lambda_1, \ldots, \lambda_d) \in \mathbb{R}^{d \times d}$, $V \in \mathbb{R}^{d \times d}$ is orthogonal and $\lambda_1 \geq \ldots \geq \lambda_d$.  Let $a \coloneqq (\varepsilon+\varepsilon^2)/2$ and $b \coloneqq (3\varepsilon+\varepsilon^2)/2$.  Define random vectors $X = (X_1, \ldots, X_d)^\top \sim P_X$ and $Y = (Y_1, \ldots, Y_d)^\top \sim P_Y$ with independent components satisfying
    \begin{align*}
        X_1 \coloneqq \begin{cases}
            -\sqrt{\frac{\lambda_1}{2\varepsilon}} \quad &\text{with probability } \varepsilon\\
            0 &\text{with probability } 1-2\varepsilon,\\
            \sqrt{\frac{\lambda_1}{2\varepsilon}} \quad &\text{with probability } \varepsilon
        \end{cases} \ \
        Y_1 \coloneqq \begin{cases}
            -\sqrt{\frac{\lambda_1}{2\varepsilon}} \quad &\text{with probability } a\\
            0 &\text{with probability } 1 - a - b \\
            \sqrt{\frac{\lambda_1}{2\varepsilon}} \quad &\text{with probability } b,
        \end{cases}
    \end{align*}
    and $X_j \overset{d}{=} Y_j \sim \mathsf{N}(0, \lambda_j)$ for all $j \in \{2, \ldots, d\}$.  Then 
    \[
    \Var(X_1) = \lambda_1, \quad \Var(Y_1) = \frac{\lambda_1}{2\varepsilon}\{(a+b) - (b-a)^2\} = \lambda_1.
    \]
    Thus $\Cov(X) = \Cov(Y) = \Lambda$, and
    \begin{align*}
        \mathrm{TV}(P_X,P_Y) = \frac{1}{2} \biggl( \frac{\varepsilon - \varepsilon^2}{2} + \varepsilon^2 + \frac{\varepsilon + \varepsilon^2}{2} \biggr) = \frac{\varepsilon + \varepsilon^2}{2} \leq \varepsilon.
    \end{align*}
    Define $X' \coloneqq VX$, $Y' \coloneqq VY$ and let $P_{X'}$, $P_{Y'}$ denote the distributions of $X'$, $Y'$ respectively.  Then $\Cov(X') = \Cov(Y') = \Sigma$, and by the data processing inequality~\citep[e.g.,][Lemma 2.1]{gerchinovitz2020fano}, 
    \begin{align*}
        \mathrm{TV}(P_{X'},P_{Y'}) \leq \mathrm{TV}(P_X,P_Y) \leq \varepsilon \leq \frac{\varepsilon}{1-\varepsilon}.
    \end{align*}
    Then, let $\mathcal{R}_{\theta} \coloneqq \bigl\{R \in \mathcal{Q}(\mathbb{R}^d):\, \mathbb{E}_{R}(X) = \theta, \mathrm{Cov}_R(X) = \Sigma\bigr\}$ and $\mathcal{R}_{\Theta} \coloneqq \{ \mathcal{R}_{\theta} : \theta\in\Theta \}$.  Then, writing $\theta_1 \coloneqq \mathbb{E}(X')$ and $\theta_2 \coloneqq \mathbb{E}(Y')$, we have by Theorem~\ref{lemma:important-lower-bound} and Lemma~\ref{thm:general-lower-bound-huber} that
    \begin{align} \label{eq:huber-lb-second-term}
        \mathcal{M}(\delta,\mathcal{P}_{\Theta},\|\cdot\|_2^2) \geq \mathcal{M}_-(\delta,\mathcal{P}_{\Theta},\|\cdot\|_2^2) \geq \frac{\|\theta_1 - \theta_2\|_2^2}{4} = \frac{(b - a)^2 \cdot \| \Sigma \|_{\mathrm{op}}}{8\varepsilon}  = \frac{\|\Sigma\|_{\mathrm{op}}\varepsilon}{8}.
    \end{align}
    Combining \eqref{eq:huber-lb-first-term} and \eqref{eq:huber-lb-second-term} proves that the stated lower bound holds for all $\delta \in (0,1/4]$.
    
    The matching upper bound follows from~\citet[Theorem 2.1]{depersin2022robust}, by taking $u = \infty$ in their notation (note that the lower bound on $\delta$---which in turn implies an upper bound on $\varepsilon$---is implicit from their theorem).
\end{proof}

\subsubsection{Gaussian mean estimation in \texorpdfstring{$\ell_{\infty}$}{ell-infinity}-norm}
In our next result, we switch to $\ell_\infty$-loss, and invoke Fano's lemma (Lemma~\ref{lemma:fano's-minimax-quantile}) for the lower bound.   As in Proposition~\ref{prop:mean-estimation}, the matching upper bound is achieved by the sample mean.
\begin{prop} \label{prop:lb-ell-infinity-norm}
    Let $n,d\in\mathbb{N}$, $\sigma > 0$, $\delta\in (0, 1/4]$, $\Theta \coloneqq \mathbb{R}^d$ and $\mathcal{P}_{\theta} \coloneqq \bigl\{ \mathsf{N}_d (\theta, \Sigma)^{\otimes n} : \Sigma \in \mathcal{S}^{d\times d}_{++},\, \|\Sigma\|_{\infty} \leq \sigma^2 \bigr\}$. Then
    \begin{align*}
        \mathcal{M}(\delta, \mathcal{P}_{\Theta}, \|\cdot\|_{\infty}) \asymp \sigma \sqrt{\frac{\log(d/\delta)}{n}}.
    \end{align*}
\end{prop}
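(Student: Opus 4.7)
The plan is to establish matching upper and lower bounds via sample-mean analysis for the upper bound, and a combination of Le Cam's two-point lemma (for the $\delta$-dependent term) and Fano's lemma (for the $d$-dependent term) for the lower bound.

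For the upper bound, I would take the sample mean $\bar{X} \coloneqq n^{-1}\sum_{i=1}^n X_i$. Coordinatewise, $\bar{X}_j - \theta_j \sim \mathsf{N}(0, \Sigma_{jj}/n)$ with $\Sigma_{jj} \leq \|\Sigma\|_\infty \leq \sigma^2$, so by a standard sub-Gaussian tail bound together with a union bound over $j \in [d]$, I would obtain $\mathbb{P}(\|\bar{X} - \theta\|_\infty > C\sigma\sqrt{\log(d/\delta)/n}) \leq \delta$ for some universal $C > 0$, uniformly over $\Sigma$ with $\|\Sigma\|_\infty \leq \sigma^2$. This immediately yields $\mathcal{M}(\delta,\mathcal{P}_\Theta,\|\cdot\|_\infty) \lesssim \sigma\sqrt{\log(d/\delta)/n}$.

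For the lower bound, by Theorem~\ref{lemma:important-lower-bound} it suffices to bound $\mathcal{M}_-(\delta)$ from below. I would obtain the two terms $\sigma\sqrt{\log(1/\delta)/n}$ and $\sigma\sqrt{\log d/n}$ separately and combine them via $\sqrt{a}+\sqrt{b}\geq \sqrt{a+b}$ to recover $\sigma\sqrt{\log(d/\delta)/n}$. For the former, I apply Corollary~\ref{cor:high-prob-le-cam-kl} with $\Sigma = \sigma^2 I_d$ (which lies in $\mathcal{P}_\theta$), $\theta_1 \coloneqq 0$ and $\theta_2 \coloneqq c\sigma\sqrt{\log(1/\delta)/n}\, e_1$ for a small universal $c>0$. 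The KL divergence between the corresponding $n$-fold Gaussian product measures equals $\frac{c^2}{2}\log(1/\delta)$, which for $c$ sufficiently small is dominated by $\log(1/(4\delta(1-\delta)))$ on $\delta\in(0,1/4]$, producing $\mathcal{M}_-(\delta)\geq \tfrac{1}{2}\|\theta_1-\theta_2\|_\infty \gtrsim \sigma\sqrt{\log(1/\delta)/n}$.

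For the $\log d$ term, which only needs to be handled when $d\geq 2$, I invoke Fano's lemma (Lemma~\ref{lemma:fano's-minimax-quantile}) with hypotheses $\theta_j \coloneqq c'\sigma\sqrt{\log d/n}\, e_j$ for $j\in[d]$ and covariance $\Sigma = \sigma^2 I_d$. The pairwise $\ell_\infty$ separation is $c'\sigma\sqrt{\log d/n}$ and, bounding the infimum over $Q$ by the choice $Q=P_1$, the pairwise KL divergences equal $(c')^2\log d$. Fano's condition then reduces to $(c')^2 + \log(2 - d^{-1})/\log d \leq 1 - \epsilon$; taking $c'$ small enough, this holds with $\epsilon = 1/4$ uniformly over $d\geq 2$ since $\log(2-d^{-1})/\log d \leq \log(3/2)/\log 2 < 0.6$. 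This gives $\mathcal{M}_-(\delta) \gtrsim \sigma\sqrt{\log d/n}$ for all $\delta \in (0,1/4]$.

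The main (minor) obstacle is verifying that Fano's condition can be satisfied uniformly over $d\geq 2$ with a single choice of $c'$ and $\epsilon=1/4$, which is what forces the two-step split (Le Cam for $\delta$ and Fano for $d$) rather than attempting a single Fano construction with $M\sim d$ hypotheses separated at scale $\sqrt{\log(d/\delta)/n}$; the latter fails because the resulting KL divergences scale with $\log(d/\delta)$ while the Fano denominator only scales with $\log d$. Once the two lower bounds are in hand, adding them and using $\sqrt{a}+\sqrt{b}\geq\sqrt{a+b}$ completes the argument, with the $d=1$ case covered by the Le Cam bound alone.
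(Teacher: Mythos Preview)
Your proposal is correct and follows essentially the same approach as the paper: sample mean with a union bound for the upper bound, Le Cam's two-point lemma for the $\log(1/\delta)$ term, and Fano's lemma with the canonical basis hypotheses $\{c'\sigma\sqrt{\log d/n}\,e_j\}_{j\in[d]}$ for the $\log d$ term. The only cosmetic differences are that the paper takes the reference measure $Q=\mathsf{N}_d(0,\sigma^2 I_d)^{\otimes n}$ rather than $Q=P_1$, and consequently restricts the Fano step to $d\geq 4$ (absorbing $d\leq 3$ into the Le Cam bound), whereas your choice of $Q$ and flexible $c'$ lets you handle all $d\geq 2$ at once; also note that with $\epsilon=1/4$ exactly, Lemma~\ref{lemma:fano's-minimax-quantile} gives the bound on $(0,1/4)$ rather than $(0,1/4]$, so you should take $\epsilon$ marginally larger.
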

\begin{remark} In our lower bound construction below, we only use distributions whose covariance matrices are scalar multiples of the identity matrix.  In that setting, we see from Proposition~\ref{prop:lb-ell-infinity-norm} that the term involving the quantile level $\delta$ dominates when $\delta \ll 1/d$, whereas with squared Euclidean error, Proposition~\ref{prop:mean-estimation} reveals that this happens when $\delta \ll e^{-d}$.
\end{remark}
\begin{proof}
    Define $\theta_1 \coloneqq \sigma\sqrt{\frac{1}{n} \log\bigl(\frac{1}{4\delta(1-\delta)}\bigr)} \,e_1$, $\theta_2 \coloneqq 0$, and $P_\ell \coloneqq \mathsf{N}_d(\theta_\ell,\sigma^2 I_d)^{\otimes n} \in \mathcal{P}_{\theta_\ell}$ for $\ell\in\{1,2\}$. Then $\mathrm{KL}(P_1, P_2) = \frac{1}{2} \log\bigl(\frac{1}{4\delta(1-\delta)}\bigr) < \log\bigl( \frac{1}{4\delta(1-\delta)} \bigr)$. Hence, by Theorem~\ref{lemma:important-lower-bound} and Corollary~\ref{cor:high-prob-le-cam-kl}, we have for $\delta\in(0,1/4]$ that
    \begin{align} \label{eq:ell-infinity-lb2}
        \mathcal{M}(\delta,\mathcal{P}_{\Theta},\|\cdot\|_{\infty}) \geq \mathcal{M}_-(\delta,\mathcal{P}_{\Theta},\|\cdot\|_{\infty}) \geq \sigma\sqrt{\frac{1}{4n} \log\biggl(\frac{1}{4\delta(1-\delta)}\biggr)} \geq \sigma\sqrt{\frac{\log(1/\delta)}{20n}}.
    \end{align}
    Now assume for the time being that $d\geq 4$.  For $j \in [d]$, define $\theta^{(j)} \coloneqq \sigma \sqrt{\frac{\log d}{2n}} \, e_j$, $P^{(j)} \coloneqq \mathsf{N}_d(\theta^{(j)}, \sigma^2 I_d)^{\otimes n} \in \mathcal{P}_{\theta^{(j)}}$, and define $Q \coloneqq \mathsf{N}_d(0,\sigma^2 I)^{\otimes n} \in \mathcal{Q}\bigl((\mathbb{R}^d)^n\bigr)$. Then $\mathrm{KL}(P^{(j)}, Q) = n\mathrm{KL}\bigl( \mathsf{N}_d(\theta^{(j)}, \sigma^2 I_d), \mathsf{N}_d(0,\sigma^2 I_d) \bigr) = \frac{1}{4}\log d$ and thus
    \begin{align*}
        \frac{d^{-1} \sum_{j=1}^d \mathrm{KL}(P^{(j)},Q) + \log(2-d^{-1})}{\log d} \leq \frac{1}{4} + \frac{\log(2-d^{-1})}{\log d} \leq \frac{7}{10}.
    \end{align*}
    Therefore, by applying Theorem~\ref{lemma:important-lower-bound}, followed by Lemma~\ref{lemma:fano's-minimax-quantile} with $\epsilon = 3/10$, we deduce that for $\delta\in(0,1/4]$,
    \begin{align} \label{eq:ell-infinity-lb1}
        \mathcal{M}(\delta,\mathcal{P}_{\Theta},\|\cdot\|_{\infty}) \geq \mathcal{M}_-(\delta,\mathcal{P}_{\Theta},\|\cdot\|_{\infty}) \geq  \sigma\sqrt{\frac{\log d}{8n}}.
    \end{align}
    Combining \eqref{eq:ell-infinity-lb2} and \eqref{eq:ell-infinity-lb1} yields that for $\delta \in (0,1/4]$ and $d\geq 4$,
    \begin{align} \label{eq:ell-infty-norm-lb1}
        \mathcal{M}(\delta,\mathcal{P}_{\Theta},\|\cdot\|_{\infty}) \geq \frac{\sigma}{2} \biggl( \sqrt{\frac{\log d}{8n}} + \sqrt{\frac{\log(1/\delta)}{20n}} \biggr) \geq \sigma \sqrt{\frac{\log(d/\delta)}{80n}}.
    \end{align}
    Now consider the case $d \leq 3$.  By~\eqref{eq:ell-infinity-lb2}, we have for $\delta \in (0,1/4]$ that
    \begin{align} \label{eq:ell-infty-norm-lb2}
        \mathcal{M}(\delta,\mathcal{P}_{\Theta},\|\cdot\|_{\infty}) \geq \sigma\sqrt{\frac{\log(1/\delta)}{20n}} \geq \sigma \sqrt{\frac{\log(d/\delta)}{40n}}.
    \end{align}
    Combining~\eqref{eq:ell-infty-norm-lb1} and~\eqref{eq:ell-infty-norm-lb2} proves the lower bound.

    For the upper bound, let $X_1, \ldots, X_n \overset{\mathrm{iid}}{\sim} P_{\theta} \in \mathcal{P}_{\theta}$ and $\bar{X} \coloneqq n^{-1} \sum_{i=1}^n X_i$. Then for $j\in[d]$, 
    \[
    \mathbb{P}\biggl( |\bar{X}_j - \theta_j| > \sigma\sqrt{\frac{2\log(d/\delta)}{n}} \biggr) = 2\int_{\sqrt{2\log(d/\delta)}}^\infty \frac{1}{\sqrt{2\pi}} e^{-x^2/2} \, \mathrm{d}x \leq \frac{\delta}{d}.
    \]
    Applying a union bound, we deduce that
    \begin{align*}
        \mathbb{P}\biggl( \|\bar{X} - \theta\|_{\infty} > \sigma\sqrt{\frac{2\log(d/\delta)}{n}} \biggr) \leq \delta,
    \end{align*}
    so $\mathcal{M}_-(\delta,\mathcal{P}_{\Theta},\|\cdot\|_{\infty}) \leq \mathcal{M}(\delta,\mathcal{P}_{\Theta},\|\cdot\|_{\infty}) \leq \sigma\sqrt{\frac{2\log(d/\delta)}{n}}$ for $\delta \in (0,1]$.
\end{proof}

\subsection{Covariance matrix estimation} \label{sec:covariance-matrix}

Here we consider the estimation of a covariance matrix in operator norm based on a sample of size $n$, and determine the minimax quantile up to universal constants.  As we show in the proof of Proposition~\ref{prop:covariance-matrix-estimation} below, the upper bound is attained by the sample covariance matrix, applying the careful analysis of \citet{koltchinskii2017concentration} (see also~\citet{zhivotovskiy2024dimension}) when the quantile level $\delta$ satisfies $\delta \geq e^{-n}$, and the trivial zero estimator when $\delta < e^{-n}$.  By contrast, the matrix Bernstein inequality yields sub-optimal concentration as compared to the minimax quantile in Proposition~\ref{prop:covariance-matrix-estimation}; see Lemma~\ref{lemma:matrix-bernstein} for details.  
Recall that the \emph{effective rank} of $\Sigma \in \mathcal{S}_{+}^{d\times d}$ is defined as $\mathbf{r}(\Sigma) \coloneqq \tr(\Sigma) / \|\Sigma\|_{\mathrm{op}}$, with the convention that $0/0 \coloneqq 0$.
\begin{prop} \label{prop:covariance-matrix-estimation}
    Let $n,d \in \mathbb{N}$, $\delta\in (0,1/4]$, $\sigma>0$, $r\in[1,n \wedge d]$, $\Theta \coloneqq \bigl\{ \Sigma \in \mathcal{S}_+^{d\times d} : \|\Sigma\|_{\mathrm{op}} \leq \sigma^2,\, \mathbf{r}(\Sigma) \leq r \bigr\}$, and for $\Sigma \in \Theta$, define $\mathcal{P}_{\Sigma} \coloneqq \bigl\{ \mathsf{N}_d(0, \Sigma)^{\otimes n} \bigr\}$.
    Then
    \begin{align*}
        \mathcal{M}(\delta,\mathcal{P}_{\Theta},\|\cdot\|_{\mathrm{op}}) \asymp \sigma^2 \sqrt{\frac{r+\log(1/\delta)}{n}} \wedge \sigma^2.
    \end{align*}
\end{prop}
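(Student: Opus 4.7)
The bound decomposes additively into a $\delta$-dependent piece and an $r$-dependent piece, with the truncation at $\sigma^2$ arising from the saturation of the $\delta$-dependent contribution; accordingly, I would handle each part of the lower bound via a different tool from Section~\ref{sec:setup}.

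For the $\delta$-dependent lower bound $\mathcal{M}(\delta)\gtrsim\sigma^2\sqrt{\log(1/\delta)/n}\wedge\sigma^2$, I would apply Corollary~\ref{cor:high-prob-le-cam-kl} to the rank-one degenerate covariances $\Sigma_1\coloneqq\sigma^2 e_1 e_1^\top$ and $\Sigma_2\coloneqq(1-c)\sigma^2 e_1 e_1^\top$, both of which lie in $\Theta$ since $\|\Sigma_\ell\|_{\mathrm{op}}\leq\sigma^2$ and $\mathbf{r}(\Sigma_\ell)=1\leq r$. Since both Gaussians are supported on the $e_1$-axis, the KL divergence of their $n$-fold products equals that of two univariate centred Gaussians with variances $\sigma^2$ and $(1-c)\sigma^2$, namely $\tfrac{n}{2}\bigl(\tfrac{c}{1-c}+\log(1-c)\bigr)$. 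Choosing $c\coloneqq\min\bigl(1/2,\,\kappa\sqrt{\log(1/\delta)/n}\bigr)$ for a sufficiently small universal constant $\kappa>0$ keeps this strictly below $\log\bigl(1/(4\delta(1-\delta))\bigr)$ for all $\delta\in(0,1/4]$, and since $\|\Sigma_1-\Sigma_2\|_{\mathrm{op}}=\sigma^2 c$, Corollary~\ref{cor:high-prob-le-cam-kl} combined with Theorem~\ref{lemma:important-lower-bound} yields the claim.

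For the $r$-dependent lower bound $\mathcal{M}(\delta)\gtrsim\sigma^2\sqrt{r/n}$, I would first establish the local minimax risk bound $\inf_{\hat\Sigma}\sup_{\Sigma\in\Theta_0}\mathbb{E}\|\hat\Sigma-\Sigma\|_{\mathrm{op}}\gtrsim\sigma^2\sqrt{r/n}$ on a finite $\Theta_0\subseteq\Theta$ with operator-norm diameter of order $\sigma^2\sqrt{r/n}$, and then transfer it via Theorem~\ref{lemma:expectation-lb-to-high-prob-lb}. The recipe I have in mind for $\Theta_0$ is to take a $1/2$-packing $\{v_1,\dots,v_M\}$ of the unit sphere in an $r$-dimensional coordinate subspace, of cardinality $M\geq 5^r$, and set $\Sigma_j\coloneqq\sigma^2\bigl((1-c)I_r+c\,v_jv_j^\top\bigr)$, extended by zeros; each $\Sigma_j$ has operator norm $\sigma^2$ and effective rank at most $r$, the matrices share a common determinant (so the $\log\det$ term in the Gaussian KL vanishes), and a short spectral computation reveals pairwise KL divergences of order $nc^2$ along with operator-norm separations of order $\sigma^2 c$. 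Choosing $c\asymp\sqrt{r/n}$ and invoking Fano's lemma then gives the risk bound; plugging it into Theorem~\ref{lemma:expectation-lb-to-high-prob-lb} with a small $\epsilon>0$ yields $\mathcal{M}_-(\delta_0,\mathcal{P}_\Theta,\|\cdot\|_{\mathrm{op}})\gtrsim\sigma^2\sqrt{r/n}$ for some fixed $\delta_0>0$, and Proposition~\ref{cor:boost-delta} (applicable since $\Theta$ is Polish, $g(x)=x$ is continuous, and the operator norm satisfies the triangle inequality) then boosts this to all $\delta\in(0,1/4]$. The two pieces combine via $a+b\asymp\sqrt{a^2+b^2}$ to give the claimed lower bound.

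For the upper bound, I would split on $\delta$: when $\delta\geq e^{-n}$, the Koltchinskii--Lounici concentration inequality for the sample covariance $\hat\Sigma_n\coloneqq n^{-1}\sum_{i=1}^n X_iX_i^\top$ yields $\|\hat\Sigma_n-\Sigma\|_{\mathrm{op}}\lesssim\sigma^2\sqrt{(r+\log(1/\delta))/n}$ with probability at least $1-\delta$, the linear-in-$\mathbf{r}/n$ and $\log(1/\delta)/n$ terms being absorbed into their square-root counterparts since both are at most $1$ in this regime. When $\delta<e^{-n}$, the trivial estimator $\hat\Sigma\equiv 0$ achieves $\|\hat\Sigma-\Sigma\|_{\mathrm{op}}=\|\Sigma\|_{\mathrm{op}}\leq\sigma^2$, matching the lower bound since $\sigma^2\sqrt{(r+\log(1/\delta))/n}\geq\sigma^2$ there. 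The main obstacle will be the $\sigma^2\sqrt{r/n}$ portion of the lower bound: because the operator norm does not decompose additively across coordinates, Assouad's lemma is not directly applicable and one must instead combine Fano with a sphere-packing construction, carefully verifying that the spectral perturbations preserve both the operator-norm and effective-rank constraints while producing KL divergences small enough for the Fano bound to bite.
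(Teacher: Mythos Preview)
Your overall plan is sound and close to the paper's, but there is one concrete step that fails as written. In your $r$-dependent construction you take a $1/2$-packing $\{v_1,\dots,v_M\}$ of the unit sphere and set $\Sigma_j=\sigma^2\bigl((1-c)I_r+cv_jv_j^\top\bigr)$. A direct computation gives $\|\Sigma_j-\Sigma_k\|_{\mathrm{op}}=\sigma^2 c\sqrt{1-\langle v_j,v_k\rangle^2}$, so the separation is controlled by $|\langle v_j,v_k\rangle|$, not by $\|v_j-v_k\|_2$. A $1/2$-packing of $\mathbb{S}^{r-1}$ only forces $\langle v_j,v_k\rangle\leq 7/8$; it does not prevent $v_k\approx -v_j$, in which case $\Sigma_j\approx\Sigma_k$ and the operator-norm separation collapses. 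The fix is easy---pack the projective space $\mathbb{RP}^{r-1}$ (equivalently, require $\|v_j\pm v_k\|_2\geq 1/2$), which still yields $\log M\gtrsim r$---but as stated the ``short spectral computation'' does not deliver the claimed separation. The paper sidesteps this by perturbing with binary vectors $\phi\in\{0,1\}^{r_0}$ from a Gilbert--Varshamov packing (so all inner products are nonnegative) and verifying separation via an explicit test direction.

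On the route: for the $r$-term you propose to prove a minimax \emph{risk} bound via Fano, convert it with Theorem~\ref{lemma:expectation-lb-to-high-prob-lb}, and then boost with Proposition~\ref{cor:boost-delta}. This works (once the packing is repaired), but it is a detour. The paper applies the high-probability Fano lemma (Lemma~\ref{lemma:fano's-minimax-quantile}) directly to the same packing, obtaining $\mathcal{M}_-(\delta)\gtrsim\sigma^2\sqrt{r/n}$ for all $\delta\in(0,1/2)$ in one stroke, with no need for Theorem~\ref{lemma:expectation-lb-to-high-prob-lb} or the boosting step. The paper also explicitly handles small $r$ (there, $r_0<20$) by absorbing $\sigma^2\sqrt{r/n}$ into the $\delta$-dependent term $\sigma^2\sqrt{\log(1/\delta)/n}$; you should note this too, since your sphere-packing argument degenerates for very small $r$ (for $r=1$ the sphere has only two antipodal points). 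Your $\delta$-dependent two-point argument and the upper bound match the paper's.
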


\begin{proof}
    Let $r_0 \coloneqq \lfloor r \rfloor$, so that $r_0 \geq (r/2) \vee 1$. 
    First consider the case $\delta \in [e^{-n}/3,1/4]$, and set $a\coloneqq 1 - \sqrt{\frac{1}{4n}\log\bigl(\frac{1}{4\delta(1-\delta)}\bigr)} \in [1/2, 1]$. Define $\Sigma^{(1)}, \Sigma^{(2)} \in \mathcal{S}_+^{d\times d}$ by
    \[
    \Sigma^{(1)} \coloneqq \begin{pmatrix}
        a^2\sigma^2 & 0 & 0\\
        0 & \sigma^2 I_{r_0 - 1} & 0\\
        0 & 0 & 0
    \end{pmatrix}
    \quad \text{and} \quad 
        \Sigma^{(2)} \coloneqq \begin{pmatrix}
        \sigma^2 I_{r_0} & 0\\
        0 & 0
    \end{pmatrix}.
    \]
    Then, for $\ell \in \{1, 2\}$, we have $\|\Sigma^{(\ell)}\|_{\mathrm{op}} \leq \sigma^2$ and $\mathbf{r}(\Sigma^{(\ell)}) \leq r_0 \leq r$, so that $\Sigma^{(\ell)} \in \Theta$. Letting $P_{\ell} \coloneqq \mathsf{N}_d(0,\Sigma^{(\ell)})^{\otimes n}$ for $\ell\in\{1,2\}$, we find by Taylor's theorem with the integral form of the remainder that
    \begin{align*}
        \mathrm{KL}(P_1,P_2) &= n\cdot \frac{\log(1/a^2) - (1 \!-\!a^2)}{2} = \frac{n}{2} \cdot \Bigl\{2(1 - a)^2 + 2 \int_{a}^{1} \frac{1}{t} \Bigl(1 - \frac{a}{t}\Bigr)^{2} \mathrm{d}t\Bigr\}\\
        &\leq 3n(1-a)^2
        = \frac{3}{4} \log\biggl( \frac{1}{4\delta(1-\delta)} \biggr) < \log\biggl( \frac{1}{4\delta(1-\delta)} \biggr).
    \end{align*}
    Therefore, combining Theorem~\ref{lemma:important-lower-bound} and Corollary~\ref{cor:high-prob-le-cam-kl}, we find that
    \begin{align*}
        \mathcal{M}(\delta,\mathcal{P}_{\Theta},\|\cdot\|_{\mathrm{op}}) \geq \frac{\|\Sigma^{(1)} - \Sigma^{(2)}\|_{\mathrm{op}}}{2} = \frac{(1-a^2)\sigma^2}{2} \geq \frac{\sigma^2}{4\sqrt{n}} \sqrt{\log\Bigl( \frac{1}{4\delta(1\!-\!\delta)} \Bigr) } \geq \frac{\sigma^2}{9} \sqrt{\frac{\log(1/\delta)}{n}}.
    \end{align*}
    On the other hand, when $\delta \in (0,e^{-n}/3)$, since $\mathcal{M}(\delta)$ is decreasing in $\delta$, we have
    \begin{align*}
        \mathcal{M}(\delta,\mathcal{P}_{\Theta},\|\cdot\|_{\mathrm{op}}) \geq \mathcal{M}(e^{-n},\mathcal{P}_{\Theta},\|\cdot\|_{\mathrm{op}}) \geq \frac{\sigma^2}{9}.
    \end{align*}
    Hence, for $\delta\in(0,1/4]$,
    \begin{align} \label{eq:covariance-matrix-delta-term}
        \mathcal{M}(\delta,\mathcal{P}_{\Theta},\|\cdot\|_{\mathrm{op}}) \gtrsim \sigma^2 \sqrt{\frac{\log(1/\delta)}{n}} \wedge \sigma^2.
    \end{align}

    Now assume that $r_0 \geq 20$ and define $\Phi \coloneqq \{0,1\}^{r_0}$. By the Gilbert--Varshamov lemma \citep[e.g.,][Lemma 4.7]{massart2007concentration}, there exists $\Phi_0 \subseteq \Phi$ with $|\Phi_0| \geq e^{r_0/8}$ and $d_{\mathrm{H}}(\phi,\phi') > r_0/4$ for all distinct $\phi,\phi'\in\Phi_0$, where $d_{\mathrm{H}}$ denotes the Hamming distance. Let $b\coloneqq \frac{\sigma^2}{6\sqrt{nr_0}}$, and for $\phi \in \Phi_0$, define $\Sigma^{(\phi)} \in \mathcal{S}_+^{d\times d}$ as well as $\Sigma^{(0)} \in \mathcal{S}^{d \times d}_{+}$ by
    \begin{align*}
        \Sigma^{(\phi)} \coloneqq \begin{pmatrix}
            \frac{\sigma^2}{2}I_{r_0} + b\phi\phi^\top & 0\\
            0 & 0
        \end{pmatrix} \quad\text{and}\quad 
        \Sigma^{(0)} \coloneqq \begin{pmatrix}
            \frac{\sigma^2}{2}I_{r_0} & 0\\
            0 & 0
        \end{pmatrix}.
    \end{align*}
    Then $\|\Sigma^{(\phi)}\|_{\mathrm{op}} = \frac{\sigma^2}{2} + b\|\phi\|_2^2 \leq \frac{\sigma^2}{2} + br_0 \leq \sigma^2$ and $\mathbf{r}(\Sigma^{(\phi)}) \leq r_0 \leq r$ for all $\phi\in\Phi_0$, so $\Sigma^{(\phi)} \in \Theta$.  For $\phi\in\Phi_0$, define $P^{(\phi)} \coloneqq \mathsf{N}_d(0,\Sigma^{(\phi)})^{\otimes n}$ so that $P^{(\phi)} \in \mathcal{P}_{\Sigma^{(\phi)}}$. Then, taking $Q \coloneqq \mathsf{N}_d(0,\Sigma^{(0)})^{\otimes n}$, we have for $\phi\in\Phi_0$ that
    \begin{align*}
        \mathrm{KL}(P^{(\phi)},Q) &= \frac{n}{2} \Biggl\{ \log\biggl( \frac{(\sigma^2/2)^{r_0}}{(b\|\phi\|_2^2 + \sigma^2/2)(\sigma^2/2)^{r_0-1}} \biggr) + \frac{2b\|\phi\|_2^2}{\sigma^2} \Biggr\}\\
        &= \frac{n}{2} \Biggl\{ \log\biggl( \frac{1}{1 + 2b\|\phi\|_2^2/\sigma^2} \biggr) + \frac{2b\|\phi\|_2^2}{\sigma^2} \Biggr\} \leq \frac{nb^2\|\phi\|_2^4}{\sigma^4} \leq \frac{r_0}{36}, 
    \end{align*}
    where the penultimate inequality follows because $\log(\frac{1}{1+x}) + x \leq x^2/2$ for $x\geq 0$ and the final inequality follows since $\|\phi\|_2^2 \leq r_0$. Therefore, 
    \begin{align} \label{eq:covariance-matrix-fano-term}
         \frac{|\Phi_0|^{-1} \sum_{\phi\in\Phi_0} \mathrm{KL}(P^{(\phi)},Q) + \log(2-|\Phi_0|^{-1})}{\log |\Phi_0|} \leq \frac{r_0/36 + \log(2)}{r_0/8} \leq \frac{1}{2},
    \end{align}
    where the final inequality holds under the assumption $r_0 \geq 20$.  Now fix distinct $\phi = (\phi_1,\ldots,\phi_{r_0})^\top$, $\phi' = (\phi_1',\ldots,\phi_{r_0}')^\top \in \Phi_0$, and define $S\coloneqq \bigl\{j\in[r_0] : \phi_j=1 \text{ and } \phi'_j=0\bigr\}$.  By reversing the roles of $\phi$ and $\phi'$ if necessary, we may assume that $|S| \geq r_0/8$ since $d_{\mathrm{H}}(\phi,\phi') > r_0/4$. Define $v=(v_1,\ldots,v_{r_0})^\top \in\{0,1\}^{r_0}$ by $v_j \coloneqq \mathbbm{1}_{\{j\in S\}}$ for $j\in[r_0]$. Then
    \begin{align} \label{eq:covariance-matrix-separation-term}
        \|\Sigma^{(\phi)} - \Sigma^{(\phi')}\|_{\mathrm{op}} \geq \frac{v^\top (\Sigma^{(\phi)} - \Sigma^{(\phi')}) v}{\|v\|_2^2} \geq \frac{b(r_0/8)^2}{r_0} \geq \frac{\sigma^2}{384} \sqrt{\frac{r_0}{n}}. 
    \end{align}
    Combining \eqref{eq:covariance-matrix-fano-term} and \eqref{eq:covariance-matrix-separation-term} with Theorem~\ref{lemma:important-lower-bound} and Lemma~\ref{lemma:fano's-minimax-quantile}, we deduce that for $\delta \in (0,1/2)$,
    \begin{align} \label{eq:covariance-matrix-expectation-term} 
        \mathcal{M}(\delta,\mathcal{P}_{\Theta},\|\cdot\|_{\mathrm{op}}) \geq \frac{\sigma^2}{768}\sqrt{\frac{r_0}{n}} \geq \frac{\sigma^2}{800}\sqrt{\frac{r}{n}}.
    \end{align}
    Hence, combining~\eqref{eq:covariance-matrix-delta-term} and~\eqref{eq:covariance-matrix-expectation-term} yields that for $r\geq 20$ and $\delta\in(0,1/4]$,
    \begin{align*}
        \mathcal{M}(\delta,\mathcal{P}_{\Theta},\|\cdot\|_{\mathrm{op}}) \gtrsim \sigma^2\sqrt{\frac{r}{n}} + \biggl( \sigma^2 \sqrt{\frac{\log(1/\delta)}{n}} \wedge \sigma^2 \biggr) \gtrsim \sigma^2 \sqrt{\frac{r+\log(1/\delta)}{n}} \wedge \sigma^2.
    \end{align*}
    On the other hand, for $r<20$ and $\delta\in(0,1/4]$, we have by \eqref{eq:covariance-matrix-delta-term} that
    \begin{align*}
        \mathcal{M}(\delta,\mathcal{P}_{\Theta},\|\cdot\|_{\mathrm{op}}) \gtrsim \sigma^2 \sqrt{\frac{\log(1/\delta)}{n}} \wedge \sigma^2 \gtrsim \sigma^2 \sqrt{\frac{r+\log(1/\delta)}{n}} \wedge \sigma^2.
    \end{align*}
    This proves the lower bound.

    For the upper bound, let $X_1, \ldots X_n \overset{\mathrm{iid}}{\sim} \mathsf{N}_d(0,\Sigma)$ where $\Sigma \in \Theta$. When $\delta\in[e^{-n},1/4]$, let $\hat{\Sigma} \coloneqq n^{-1} \sum_{i=1}^n X_iX_i^\top$. Then by \citet[Corollary~2]{koltchinskii2017concentration}, we have with probability at least $1-\delta$ that
    \begin{align*}
        \|\hat{\Sigma} - \Sigma\|_{\mathrm{op}} \lesssim \sigma^2\sqrt{\frac{r+\log(1/\delta)}{n}}.
    \end{align*}
    On the other hand, when $\delta \in (0,e^{-n})$, let $\hat{\Sigma} \coloneqq 0$, so that $\|\hat{\Sigma} - \Sigma\|_{\mathrm{op}} = \|\Sigma\|_{\mathrm{op}} \leq \sigma^2$. Thus
    \begin{align*}
        \mathcal{M}(\delta,\mathcal{P}_{\Theta},\|\cdot\|_{\mathrm{op}}) \lesssim \sigma^2\sqrt{\frac{r+\log(1/\delta)}{n}} \wedge \sigma^2,
    \end{align*}
    for $\delta\in(0,1/4]$.
\end{proof}

\subsection{Sparse linear regression}
 
In this section, we consider sparse linear regression with fixed design $X \in \mathbb{R}^{n \times d}$.  We will require the weighted restricted eigenvalue (WRE) condition of~\citet[][p.\ 3616]{bellec2018slope}.  To recall this, for $v = (v_1,\ldots,v_d)^\top \in \mathbb{R}^d$, let $v^\sharp = (v_1^\sharp,\ldots,v_d^\sharp)^\top \in [0,\infty)^d$ denote the vector obtained by ordering the absolute values of the components of $v$ in decreasing order.  For $j \in [d]$ and some $\sigma > 0$ (which will be taken to be the noise standard deviation), let $\lambda_j \coloneqq 6\sigma \sqrt{n^{-1}\log(2d/j)}$.  Now, for $c_0 > 0$, $s \in [d]$, define the cone
\[
\mathcal{C}_{\mathrm{WRE}}(s,c_0) \coloneqq \biggl\{v = (v_1,\ldots,v_d)^\top \in \mathbb{R}^d: \sum_{j=1}^d \lambda_j v_j^\sharp \leq (1+c_0)\|v\|_2\biggl(\sum_{j=1}^s \lambda_j^2\biggr)^{1/2}\biggr\}
\]
and let
\[
\vartheta(s,c_0) \coloneqq \min_{v \in \mathcal{C}_{\mathrm{WRE}}(s,c_0) \setminus \{0\}} \frac{\|Xv\|_2}{n^{1/2}\|v\|_2}.
\]
The following assumption imposes regularity conditions on our design matrix. 
\begin{assumption}\label{asm:design}
    Each column of $X \in \mathbb{R}^{n \times d}$ has Euclidean norm at most $\sqrt{n}$.  Moreover, there exist $c, C > 0$ such that when $d=1$, $X$ satisfies $\|X\|_2 \geq c\sqrt{n}$; when $d\geq 2$, $X$ satisfies $\vartheta(s \vee 2, 3) \geq c$ and $\max_{T\subseteq [d] : |T| \leq s\vee2} \|X_T\|_{\mathrm{op}} \leq C\sqrt{n}$, where $X_T \in \mathbb{R}^{n \times |T|}$ is the submatrix of~$X$ obtained by extracting the columns with indices in~$T$.
\end{assumption}
Existing results show that if $X\in\mathbb{R}^{n\times d}$ has independent $\mathsf{N}(0,1/2)$ entries, then there exist universal constants $c', C' > 0$ such that, when $d\geq 2$, Assumption~\ref{asm:design} holds with $c=1/2$ and $C = 2$ with probability at least $1-e^{-c'n}$ when $n \geq C's\log(2ed/s)$.  Indeed, the verification of the condition $\vartheta(s \vee 2, 3) \geq 1/2$ is given by~\citet[Theorem~8.3]{bellec2018slope}, and verification of the condition $\max_{T\subseteq [d] : |T| \leq s\vee2} \|X_T\|_{\mathrm{op}} / \sqrt{n} \leq 2$ can be obtained via concentration results for Gaussian ensembles~\citep[e.g.,][Theorem~6.1]{wainwright2019high}.

We note that the matching upper bound in Proposition~\ref{Prop:SparseLR} below is attained by the SLOPE estimator~\citep{bogdan2015slope,bellec2018slope}, which is $\delta$-independent.
\begin{prop}
\label{Prop:SparseLR}
    Let $n, d\in\mathbb{N}$, $s\in[d]$, $\delta\in(0,1/4]$, $\sigma>0$ and $\Theta \coloneqq \bigl\{ \theta \in \mathbb{R}^d : \|\theta\|_0 \leq s \bigr\}$.  Assume that $X \in \mathbb{R}^{n \times d}$ satisfies Assumption~\ref{asm:design} for some $c,C > 0$, and for $\theta \in \Theta$, let $\mathcal{P}_{\theta} \coloneqq \bigl\{\mathsf{N}_n(X\theta, \sigma^2 I_n)\bigr\}$.  
    Then, setting $L(\theta,\theta') \coloneqq n^{-1}\|X(\theta - \theta')\|_2^2$, we have
    \begin{align*}
        \mathcal{M}(\delta,\mathcal{P}_{\Theta},L) \asymp_{c,C} \sigma^2 \cdot \frac{s\log(ed/s) + \log(1/\delta)}{n}.
    \end{align*}
\end{prop}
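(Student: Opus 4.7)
The plan is to combine a high-probability upper bound via the SLOPE estimator with a matching lower bound split into a $\delta$-dependent piece (obtained from Corollary~\ref{cor:high-prob-le-cam-kl} applied to a one-sparse two-point construction) and a $\delta$-independent piece of order $\sigma^2 s\log(ed/s)/n$ (obtained from Fano's lemma applied to a Gilbert--Varshamov packing of sparse vectors).

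For the upper bound, the SLOPE estimator $\hat\theta$ of \citet{bogdan2015slope} with tuning sequence $\lambda_j = A\sigma\sqrt{n^{-1}\log(2d/j)}$ (for a universal $A$) satisfies, by \citet[Theorem~4.3]{bellec2018slope} under Assumption~\ref{asm:design}, the prediction guarantee: with probability at least $1-\delta$,
\[
n^{-1}\|X(\hat\theta - \theta)\|_2^2 \lesssim_{c,C} \sigma^2\cdot\frac{s\log(ed/s) + \log(1/\delta)}{n}.
\]
Since $\hat\theta$ is $\delta$-independent, this yields the matching upper bound and shows that SLOPE is simultaneously quantile-optimal. For the $\delta$-dependent lower bound, I pick any $j\in[d]$ with $\|Xe_j\|_2 > 0$ (whose existence follows from Assumption~\ref{asm:design}) and set $\theta_1 \coloneqq 0$ and $\theta_2 \coloneqq te_j$ with $t \coloneqq \sigma\sqrt{\log\bigl(1/(4\delta(1-\delta))\bigr)}/\|Xe_j\|_2$; both are $1$-sparse and hence lie in $\Theta$. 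A direct computation gives $\mathrm{KL}(P_1,P_2) = \frac{1}{2}\log\bigl(1/(4\delta(1-\delta))\bigr) < \log\bigl(1/(4\delta(1-\delta))\bigr)$, while $L(\theta_1,\theta_2) = n^{-1}\sigma^2\log\bigl(1/(4\delta(1-\delta))\bigr)$. Theorem~\ref{lemma:important-lower-bound} combined with Corollary~\ref{cor:high-prob-le-cam-kl} then yields $\mathcal{M}_-(\delta,\mathcal{P}_{\Theta},L) \gtrsim \sigma^2\log(1/\delta)/n$ for $\delta\in(0,1/4]$.

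The main substantive step is the $\sigma^2 s\log(ed/s)/n$ lower bound via Fano, for which I may assume $d\geq 2$ and $s\log(ed/s)$ exceeds a universal constant (otherwise the Le Cam bound already suffices since $\log(1/\delta)\geq \log 4$ for $\delta\leq 1/4$). Setting $s_0 \coloneqq \lfloor(s\vee 2)/2\rfloor$, a standard Gilbert--Varshamov argument produces $\Phi_0 \subseteq \{\phi\in\{0,1\}^d : \|\phi\|_0 = s_0\}$ with $\log|\Phi_0| \gtrsim s\log(ed/s)$ and pairwise Hamming distance at least $s_0/2$. I then define $\alpha \coloneqq (\sigma/C)\sqrt{\gamma\log(ed/s)/n}$, $\theta^{(\phi)} \coloneqq \alpha\phi$ and $P^{(\phi)} \coloneqq \mathsf{N}_n(X\theta^{(\phi)},\sigma^2 I_n)$, where $\gamma > 0$ is a small constant to be calibrated. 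Each $\theta^{(\phi)}$ is $s$-sparse, and each difference $\theta^{(\phi)}-\theta^{(\phi')}$ is at most $(s\vee 2)$-sparse, hence lies in $\mathcal{C}_{\mathrm{WRE}}(s\vee 2,3)$, so Assumption~\ref{asm:design} gives $L(\theta^{(\phi)},\theta^{(\phi')}) \geq c^2\alpha^2 d_{\mathrm{H}}(\phi,\phi') \gtrsim_{c,C} \sigma^2 s\log(ed/s)/n$. Taking $Q \coloneqq \mathsf{N}_n(0,\sigma^2 I_n)$, the operator-norm control in Assumption~\ref{asm:design} bounds $\mathrm{KL}(P^{(\phi)},Q) \leq \alpha^2 C^2 ns_0/(2\sigma^2) \lesssim \gamma s\log(ed/s)$. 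Choosing $\gamma$ small enough makes the Fano ratio in Lemma~\ref{lemma:fano's-minimax-quantile} at most $1/2$, so $\mathcal{M}_-(\delta) \gtrsim_{c,C} \sigma^2 s\log(ed/s)/n$ for $\delta\in(0,1/2)$. Summing this with the Le Cam bound gives the required matching lower bound.

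The main obstacle is the calibration at the Fano step: the differences $\theta^{(\phi)}-\theta^{(\phi')}$ must remain sparse enough to lie in $\mathcal{C}_{\mathrm{WRE}}(s\vee 2,3)$ (dictating the choice $s_0 = \lfloor(s\vee 2)/2\rfloor$ rather than $s$), and the scale $\alpha$ must be balanced so as to yield the correct rate in the pairwise separations while keeping the KL small enough for Fano's condition. Everything else is routine given the tools from Section~\ref{sec:setup}.
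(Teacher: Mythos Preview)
Your proposal is correct and follows essentially the same strategy as the paper: SLOPE for the upper bound, a one-sparse two-point Le Cam construction for the $\sigma^2\log(1/\delta)/n$ term, and Fano's lemma with a Gilbert--Varshamov packing of sparse vectors for the $\sigma^2 s\log(ed/s)/n$ term. Your Le Cam step is marginally cleaner than the paper's (you choose $t$ so the KL and the prediction loss are computed exactly, avoiding any appeal to the WRE condition at that stage), and your Fano packing uses constant-weight binary vectors with $s_0=\lfloor (s\vee 2)/2\rfloor$ rather than the paper's unit-norm sparse vectors from Lemma~\ref{lemma:GilbertVarshamov}, but these are cosmetic differences within the same argument.
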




\begin{proof}
Let $\theta_1 \coloneqq 0 \in \mathbb{R}^d$ and $\theta_2 \coloneqq e_1 \sqrt{\frac{\sigma^2}{n} \log \bigl( \frac{1}{4\delta(1-\delta)}} \bigr) \in \mathbb{R}^d$, so that $\theta_1,\theta_2 \in \Theta$. For $\ell\in\{1,2\}$, let $P_{\ell} \coloneqq \mathsf{N}_n(X\theta_{\ell}, \sigma^2 I_n)$. Then $\mathrm{KL}(P_1,P_2) = \frac{1}{2\sigma^2} \|X\theta_2\|_2^2 \leq \frac{n}{2\sigma^2}\|\theta_2\|_2^2 < \log \bigl( \frac{1}{4\delta(1-\delta)} \bigr)$. Therefore, combining Theorem~\ref{lemma:important-lower-bound}, Assumption~\ref{asm:design} (since $\theta_1 - \theta_2 \in \mathcal{C}_{\mathrm{WRE}}(s\vee2, 3)$ when $d\geq 2$) and Corollary~\ref{cor:high-prob-le-cam-kl}, we have for $\delta \in (0,1/4]$ that
    \begin{align} \label{eq:sparse-linear-regression-delta-term}
        \mathcal{M}(\delta,\mathcal{P}_{\Theta},L) \geq \frac{1}{4n}\|X(\theta_1 - \theta_2)\|_2^2 \geq \frac{c^2}{4}\|\theta_1 - \theta_2\|_2^2 \geq \frac{c^2\sigma^2\log(1/\delta)}{20n}.
    \end{align}

    To capture the first term in the lower bound, let us first consider the case where $d \geq 4e$.  By Lemma~\ref{lemma:GilbertVarshamov}, there exist $M \in \mathbb{N}$ with 
    \[
    \log M \geq \frac{3\bigl(1 \vee \lfloor s/2\rfloor \wedge \lfloor d/(4e) \rfloor\bigr)}{4} \log\biggl(\frac{d}{4\bigl(1 \vee \lfloor s/2\rfloor \wedge \lfloor d/(4e) \rfloor\bigr)} \biggr)
    \]
    and $\mathcal{V} \coloneqq \bigl\{ v^{(1)},\ldots,v^{(M)} \bigr\}$ such that $\mathcal{V}$ forms a Euclidean norm $(1/2)$-packing of the set $\bigl\{ \theta \in \mathbb{R}^d : \|\theta\|_0 \leq 1 \vee \lfloor s/2 \rfloor \wedge \lfloor d/(4e) \rfloor,\, \|\theta\|_2 \leq 1 \bigr\}$. For $m\in[M]$, define $\theta^{(m)} \coloneqq v^{(m)} \sqrt{\frac{\sigma^2 \log M}{4C^2n}}$, so that $\theta^{(m)} \in \Theta$, and define $P^{(m)} \coloneqq \mathsf{N}_n(X\theta^{(m)}, \sigma^2 I_n)$. Further define $Q\coloneqq \mathsf{N}_n(0, \sigma^2 I_n)$ so that $\mathrm{KL}(P^{(m)}, Q) = \frac{1}{2\sigma^2}\|X\theta^{(m)}\|_2^2 \leq \frac{\log M}{8}$ for $m \in [M]$.  Since $\log M \geq 3/4$ and since $M \mapsto \log(2-M^{-1})/\log M$ is decreasing for $M \geq e^{3/4}$, we have 
    \begin{align*}
        \frac{M^{-1}\sum_{m=1}^M \mathrm{KL}(P^{(m)}, Q) + \log(2-M^{-1})}{\log M} \leq \frac{1}{8} + \frac{\log(2-M^{-1})}{\log M} \leq \frac{7}{10}.
    \end{align*}
    Since $\theta^{(m)} - \theta^{(m')} \in \mathcal{C}_{\mathrm{WRE}}(s \vee 2, 3)$ for distinct $m, m' \in [M]$, we have by Lemma~\ref{lemma:fano's-minimax-quantile}, that for $\delta\in(0,1/4]$,
    \begin{align}
        \mathcal{M}(\delta,\mathcal{P}_{\Theta},L) &\geq \frac{1}{4n}\cdot \min_{m\neq m'}\|X(\theta^{(m)} - \theta^{(m')})\|_2^2 \geq \frac{c^2}{4} \min_{m\neq m'}\|\theta^{(m)} - \theta^{(m')}\|_2^2 \nonumber\\
        &> \frac{c^2\sigma^2\{s/4 \wedge 3d/(32e)\}\log\bigl(\frac{d}{4s} \bigr)}{2^6 C^2n} \geq \frac{c^2\sigma^2s\log(ed/s)}{2^{13}C^2n}. \label{eq:sparse-linear-regression-expectation-term-1}
    \end{align}
From~\eqref{eq:sparse-linear-regression-delta-term} and \eqref{eq:sparse-linear-regression-expectation-term-1}, we deduce that for $d \geq 4e$ and $\delta\in(0,1/4]$,
\[
\mathcal{M}(\delta,\mathcal{P}_{\Theta},L) \geq \frac{c^2\sigma^2\log(1/\delta)}{40n} + \frac{c^2\sigma^2s\log(ed/s)}{2^{14}C^2n}.
\]
On the other hand, when $d < 4e$, we have by~\eqref{eq:sparse-linear-regression-delta-term} that for $\delta\in(0,1/4]$,
    \begin{align*}
        \mathcal{M}(\delta,\mathcal{P}_{\Theta},L) \geq \frac{c^2\sigma^2\log(1/\delta)}{20n} \geq c^2\sigma^2\cdot \frac{s\log(ed/s) + \log(1/\delta)}{200n}.
    \end{align*}
    This proves the lower bound.

    The upper bound is achieved, for example, by the SLOPE estimator \citep{bogdan2015slope}, with a $\delta$-independent tuning parameter. In particular, applying \citet[Eq.\ (6.2) and (6.5)]{bellec2018slope} with $\gamma = \tau = 1/4$ in their notation yields that for any $\delta \in (0,1)$,
    \[
    \mathcal{M}(\delta,\mathcal{P}_{\Theta},L) \leq 100(4+\sqrt{2})\sigma^2\biggl(\frac{s\log(ed/s)}{c^2n} + \frac{\log(1/\delta)}{n}\biggr),
    \]
    as required.
\end{proof}

\subsection{Nonparametric density estimation}

Turning to a nonparametric statistical problem, we now consider density estimation at a point.  
To fix the setting, let $\beta,\gamma > 0$ and $m \coloneqq \lceil \beta \rceil - 1$. The \emph{H\"{o}lder class} $\mathcal{F}(\beta,\gamma)$ of density functions is the set of all Borel measurable functions $f: \mathbb{R} \to [0,\infty)$ satisfying that $\int_{-\infty}^{\infty} f(x)\,\mathrm{d}x = 1$, that $f$ is $m$-times differentiable and that
\begin{align*}
    |f^{(m)}(x) - f^{(m)}(x')| \leq \gamma |x-x'|^{\beta-m},
\end{align*}
for all $x,x'\in\mathbb{R}$.

Proposition~\ref{prop:KDE} determines an explicit expression for the minimax quantiles, up to multiplicative factors depending only on $\beta$.  We establish, using Lepski's method, that the upper bound is attainable by a $\delta$-independent estimator in Lemma~\ref{lemma:kde-ub-lepski}. 
\begin{prop}\label{prop:KDE}
    Let $n\in\mathbb{N}$, $\delta\in(0,1/4]$, $\Theta\coloneqq \mathcal{F}(\beta, \gamma)$ be the H\"{o}lder class of density functions and for $f\in\Theta$, define $\mathcal{P}_f \coloneqq \{P_f^{\otimes n}\}$ where $P_f$ is the distribution with density $f$. Fix $x_0 \in \mathbb{R}$ and let $L(f_1,f_2) \coloneqq \{f_1(x_0) - f_2(x_0)\}^2$. Then
    \begin{align*}
        \mathcal{M}(\delta, \mathcal{P}_{\Theta}, L) \asymp_{\beta}  \gamma^{2/(\beta+1)} \cdot \biggl\{ \biggl( \frac{\log(1/\delta)}{n} \biggr)^{2\beta/(2\beta+1)} \wedge 1\biggr\}.
    \end{align*}
\end{prop}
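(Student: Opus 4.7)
The plan is to establish the lower bound via Corollary~\ref{cor:high-prob-le-cam-kl} together with Theorem~\ref{lemma:important-lower-bound}, using the classical bump-function construction, and to read off the matching upper bound from Lemma~\ref{lemma:kde-ub-lepski}, whose Lepski-type kernel density estimator is $\delta$-independent.

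For the lower bound, I would fix a reference density $f_0 \in \mathcal{F}(\beta,\gamma/2)$ that is bounded below by a positive constant $c_0$ on a neighbourhood of $x_0$, together with a compactly supported $C^\infty$ bump $\psi$ on $[-1,1]$ satisfying $\psi(0) = 1$ and $\int \psi = 0$. For a bandwidth $h > 0$ to be chosen, set
\begin{equation*}
f_h(x) \coloneqq f_0(x) + c_\beta\, \gamma\, h^\beta\, \psi\bigl((x-x_0)/h\bigr),
\end{equation*}
with $c_\beta > 0$ small enough that $f_h \in \mathcal{F}(\beta,\gamma)$ (the $m$-th derivative of $\psi(\cdot/h)$ scales as $h^{-m}$ and its $(\beta-m)$-Hölder modulus as $h^{-(\beta-m)}$, so the perturbation contributes $O_\beta(\gamma)$ to the Hölder seminorm) and $f_h \geq 0$ whenever $h$ is below a threshold. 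The two key quantities are then $L(f_0,f_h) = c_\beta^2 \gamma^2 h^{2\beta}$, and by a Taylor expansion of $t\mapsto t\log t$ combined with the lower bound on $f_0$,
\begin{equation*}
\mathrm{KL}\bigl(P_{f_0}^{\otimes n}, P_{f_h}^{\otimes n}\bigr) \lesssim_\beta n\gamma^2 h^{2\beta+1}.
\end{equation*}
Setting $h_\delta$ so that the KL equals $\log\bigl(1/(4\delta(1-\delta))\bigr)$ gives $h_\delta \asymp_\beta \bigl(\log(1/\delta)/(n\gamma^2)\bigr)^{1/(2\beta+1)}$, and Corollary~\ref{cor:high-prob-le-cam-kl} yields the claimed lower bound in the main regime $\log(1/\delta)/n \lesssim_\beta 1$.

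When $\log(1/\delta)/n$ is so large that $h_\delta$ would exceed the admissible range, I would instead take $h$ to be a fixed constant chosen as the largest value for which $f_h$ remains in $\mathcal{F}(\beta,\gamma)$; this produces the $\delta$-free saturated term in the bound, and the corresponding KL divergence is then uniformly bounded so that Corollary~\ref{cor:high-prob-le-cam-kl} still applies. The matching upper bound is supplied by Lemma~\ref{lemma:kde-ub-lepski}, whose single $\delta$-independent Lepski-adaptive kernel density estimator attains the minimax quantile simultaneously across the range $\delta \in (0,1/4]$, thereby also exhibiting adaptation to the quantile level.

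The main obstacle is verifying that $f_h$ lies in $\mathcal{F}(\beta,\gamma)$ with the correct $\beta$- and $\gamma$-dependent constants; this requires careful bookkeeping of the $m$-th derivative Hölder modulus with $m = \lceil\beta\rceil - 1$, simultaneous maintenance of nonnegativity and unit mass as $h$ varies, and the clean gluing of the main and saturated regimes to yield the single $\wedge 1$ expression in the statement.
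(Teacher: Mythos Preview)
Your approach---perturbing a fixed reference density $f_0$ by a localised bump---is sound in outline and is the classical route, but there is a genuine gap in the $\gamma$-bookkeeping. You claim $\mathrm{KL}(P_{f_0}^{\otimes n}, P_{f_h}^{\otimes n}) \lesssim_\beta n\gamma^2 h^{2\beta+1}$, but the KL upper bound (via Taylor expansion or the $\chi^2$-divergence) necessarily carries a factor $1/c_0$, and $c_0$ cannot be a $\beta$-only constant: every density in $\mathcal{F}(\beta,\gamma/2)$ satisfies $\|f_0\|_\infty \lesssim_\beta \gamma^{1/(\beta+1)}$, so $c_0 \lesssim_\beta \gamma^{1/(\beta+1)}$ as well. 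With the correct (and attainable) scaling $c_0 \asymp_\beta \gamma^{1/(\beta+1)}$, the KL becomes $\lesssim_\beta n\gamma^{(2\beta+1)/(\beta+1)} h^{2\beta+1}$, the optimal bandwidth becomes $h_\delta \asymp_\beta \gamma^{-1/(\beta+1)}\bigl(\log(1/\delta)/n\bigr)^{1/(2\beta+1)}$, and the loss then recovers the correct $\gamma^{2/(\beta+1)}$ prefactor. As written, your displayed formulas would produce $\gamma^{2/(2\beta+1)}$, which does not match the statement. The same scaling enters the saturated regime: the largest admissible $h$ (dictated by nonnegativity of $f_h$, i.e.\ $c_\beta\gamma h^\beta\|\psi\|_\infty \leq c_0$) is of order $\gamma^{-1/(\beta+1)}$, not a $\gamma$-free constant.

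The paper takes a different route that sidesteps this issue entirely. Rather than perturbing a pre-existing reference density, it constructs both alternatives $f_0,f_1$ symmetrically from scratch using a compactly supported smooth kernel $K$: with $\tilde\gamma \coloneqq \gamma/\|K\|_{C^\beta}$, it forms $g(x) = \tilde\gamma^{1/(\beta+1)} K(\tilde\gamma^{1/(\beta+1)} x)$ and a smaller rescaled bump $h(x) = \lambda\tilde\gamma^{1/(\beta+1)} K(\lambda^{-1/\beta}\tilde\gamma^{1/(\beta+1)} x)$, then places $g+h$ and $g$ at two disjoint locations, swapped between $f_0$ and $f_1$. Because $h \leq \lambda g$ pointwise, the KL reduces to a single explicit integral on the support of $h$ and yields $\mathrm{KL}(f_0^n,f_1^n) \leq (n/2)\lambda^{(2\beta+1)/\beta}$ with $\lambda \in (0,1]$ a dimensionless tuning parameter; the $\gamma$-scaling is then automatic in the separation $|f_0(0)-f_1(0)| \geq \lambda\tilde\gamma^{1/(\beta+1)}/3$. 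The upper bound via Lemma~\ref{lemma:kde-ub-lepski} is exactly as you propose.
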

\begin{remark}
\citet[][Corollary~2 and Lemma~16]{polyanskiy2021dualizing} obtain that for the subclass $\mathcal{F}_{[-1,1]}(\beta,\gamma)$ of $\mathcal{F}(\beta,\gamma)$ consisting of densities supported on $[-1,1]$, and for $\delta \in [e^{-2n},1/16]$, we have
\[
\mathcal{M}(\delta) \asymp_{\beta,\gamma} \biggl( \frac{\log(1/\delta)}{n} \biggr)^{2\beta/(2\beta+1)},
\]
with the upper bound attained by a $\delta$-dependent estimator.
\end{remark}
\begin{proof}
By the translation invariance of $\mathcal{F}(\beta,\gamma)$, the minimax quantile $\mathcal{M}(\delta,\mathcal{P}_\Theta,L)$ is the same for all $x_0\in\mathbb{R}$, so assume without loss of generality that $x_0 = 0$. 
Let $m \coloneqq \lceil\beta\rceil - 1$ and take $K: \mathbb{R} \rightarrow [0,\infty)$ to be the infinitely differentiable density given by $K(x)\propto e^{-1/(1-4x^2)}\mathbbm{1}_{\{|x| < 1/2\}}$, which satisfies 
\[
\|K\|_{C^\beta} \coloneqq \sup_{x \neq x'}\frac{|K^{(m)}(x) - K^{(m)}(x')|}{|x-x'|^{\beta - m}} < \infty.
\]
Define $\tilde{\gamma}\coloneqq \gamma/\|K\|_{C^\beta}$, and for $\lambda\in (0,1]$ that will be chosen later to depend on $n$, $\beta$ and $\delta$, define $g,h:\mathbb{R} \rightarrow [0,\infty)$ by
\begin{equation}
\label{Eq:gh}
g(x) \coloneqq \tilde{\gamma}^{1/(\beta+1)}K\bigl(\tilde{\gamma}^{1/(\beta+1)}x\bigr),\qquad h(x) \coloneqq \lambda \tilde{\gamma}^{1/(\beta+1)}K\bigl(\lambda^{-1/\beta}\tilde{\gamma}^{1/(\beta+1)}x\bigr).
\end{equation}
Then $g \in \mathcal{F}(\beta,\gamma)$ because it is $m$-times differentiable, and for $x,x' \in \mathbb{R}$,
 \begin{align*}
 \bigl|g^{(m)}(x) - g^{(m)}(x')\bigr| &= \tilde{\gamma}^{\frac{m+1}{\beta+1}}\bigl|K^{(m)}\bigl(\tilde{\gamma}^{1/(\beta+1)}x\bigr) - K^{(m)}\bigl(\tilde{\gamma}^{1/(\beta+1)}x'\bigr)\bigr| \\
 &\leq \tilde{\gamma}^{\frac{m+1}{\beta+1}}\|K\|_{C^\beta}\bigl|\tilde{\gamma}^{1/(\beta+1)}x - \tilde{\gamma}^{1/(\beta+1)}x'\bigr|^{\beta-m} = \gamma|x-x'|^{\beta-m}.
 \end{align*}
Similarly, $\bigl|h^{(m)}(x) - h^{(m)}(x')\bigr| \leq \gamma|x-x'|^{\beta-m}$ for $x,x' \in \mathbb{R}$, and $\int_{-\infty}^\infty h = \lambda^{(\beta+1)/\beta}$.  Moreover,
\[
h(x) \leq \lambda \tilde{\gamma}^{1/(\beta+1)} K\bigl(\tilde{\gamma}^{1/(\beta+1)}x\bigr) = \lambda g(x)
\]
for all $x \in \mathbb{R}$.  Now define $f_0,f_1: \mathbb{R} \rightarrow [0,\infty)$ by
\begin{align}
\label{Eq:f0f1}
f_0(x) &\coloneqq \frac{(g+h)(x)+g(2\tilde{\gamma}^{-1/(\beta+1)}-x)}{2+\lambda^{(\beta+1)/\beta}}, \\ \label{Eq:f0f12}
f_1(x) &\coloneqq\frac{g(x)+(g+h)(2\tilde{\gamma}^{-1/(\beta+1)}-x)}{2+\lambda^{(\beta+1)/\beta}};
\end{align}
see Figure~\ref{Fig:LeCam}.
\begin{figure}
    \centering
    \includegraphics[width=\textwidth]{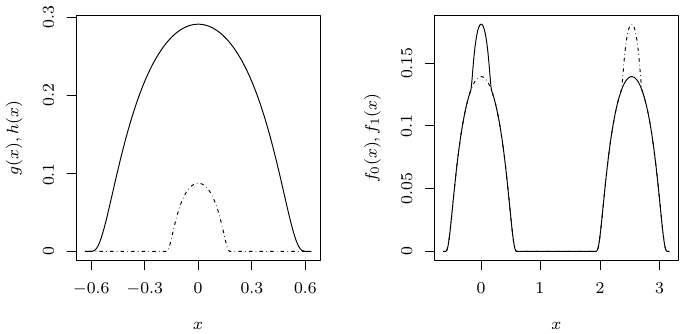}
    \caption{\label{Fig:LeCam}Illustration of the construction in Proposition~\ref{prop:KDE}.  Left: The functions $g$ (solid) and $h$ (dot-dashed) from~\eqref{Eq:gh}.  Right: The functions $f_0$ (solid) and $f_1$ (dot-dashed) from~\eqref{Eq:f0f1} and~\eqref{Eq:f0f12}.}
\end{figure}
Since $g,h$ are zero outside the interval $[-\tilde{\gamma}^{-1/(\beta+1)}/2,\tilde{\gamma}^{-1/(\beta+1)}/2]$, we deduce that $f_0,f_1\in\mathcal{F}(\beta,\gamma)$ and since $K(0) \geq 1$,
\[
f_0(0)-f_1(0)=\frac{h(0)}{2+\lambda^{(\beta+1)/\beta}}=\frac{\lambda \tilde{\gamma}^{1/(\beta+1)} K(0)}{2+\lambda^{(\beta+1)/\beta}}\geq\frac{\lambda \tilde{\gamma}^{1/(\beta+1)}}{3}.
\]
In addition, writing $I\coloneqq[-\lambda^{1/\beta}\tilde{\gamma}^{-1/(\beta+1)}/2,\lambda^{1/\beta}\tilde{\gamma}^{-1/(\beta+1)}/2]$ for the support of $h$ and $I' \coloneqq 2\tilde{\gamma}^{-1/(\beta+1)} + I$, we have $f_0 = f_1$ on $\mathbb{R}\setminus (I\cup I')$. Thus, since $\log(1+\lambda) \leq \lambda$, it follows that the product densities $\bigl\{f_j^n:j \in \{0,1\}\bigr\}$ given by $f_j^n(x_1,\ldots,x_n) \coloneqq \prod_{i=1}^n f_j(x_i)$ satisfy
\begin{align*}
\mathrm{KL}(f_0^n,f_1^n) &= n\mathrm{KL}(f_0,f_1) = n\int_{I\cup I'} f_0 \log \Bigl(\frac{f_0}{f_1}\Bigr) \\
&= \frac{n}{2+\lambda^{(\beta+1)/\beta}}\int_I \,\biggl\{(g+h)\log \Bigl(\frac{g+h}{g}\Bigr) + g \log\Bigl(\frac{g}{g+h}\Bigr)\biggr\} \\
&=\frac{n}{2+\lambda^{(\beta+1)/\beta}}\int_I h\log\biggl(1+\frac{h}{g}\biggr)\\
&\leq\frac{n}{2}\log(1+\lambda)\int_I h \leq \frac{n}{2}\lambda^{(2\beta+1)/\beta}.
\end{align*}
Taking $\lambda = \bigl\{\frac{1}{n}\log\bigl(\frac{1}{4\delta(1-\delta)}\bigr)\bigr\}^{\beta/(2\beta+1)} \wedge 1 \in (0,1]$, we have that $\mathrm{KL}(f_0^n,f_1^n) \leq \frac{1}{2}\log\bigl(\frac{1}{4\delta(1-\delta)}\bigr) < \log\bigl(\frac{1}{4\delta(1-\delta)}\bigr)$, so by Theorem~\ref{lemma:important-lower-bound} and Corollary~\ref{cor:high-prob-le-cam-kl}, for $\delta \in (0,1/4]$,
\begin{align*}
\mathcal{M}(\delta, \mathcal{P}_{\Theta}, L) \geq \mathcal{M}_-(\delta, \mathcal{P}_{\Theta}, L) &\geq \biggl(\frac{\lambda\tilde{\gamma}^{1/(\beta+1)}}{2 \cdot 3}\biggr)^2 \\
&\geq \frac{\gamma^{2/(\beta+1)}}{36 \cdot 5^{2\beta/(2\beta+1)}\|K\|_{C^\beta}^{2/(\beta+1)}} \biggl\{ \biggl( \frac{\log(1/\delta)}{n} \biggr)^{2\beta/(2\beta+1)} \wedge 1\biggr\}.
\end{align*}
This completes the proofs of the lower bound.  The upper bound follows from Lemma~\ref{lemma:kde-ub-lepski}, after fixing a bounded kernel supported on $[-1,1]$.  
\end{proof}

\subsection{Isotonic regression}

Other natural nonparametric estimation problems are provided by shape-constrained models \citep{groeneboom2014nonparametric,samworth2018editorial}, and here we consider isotonic regression.  The matching upper bound is attained by the projected isotonic least squares estimator, which is $\delta$-independent.
\begin{prop}\label{prop:isotonic-book}
    Let $n\geq 2$, $\delta\in(0,1/4]$, $\Theta \coloneqq \bigl\{ \theta=(\theta_1,\ldots,\theta_n)^\top \in[0,1]^n : \theta_1\leq \cdots \leq \theta_n \bigr\}$ and $\mathcal{P}_{\theta} \coloneqq \bigl\{ \mathsf{N}_n(\theta, I_n) \bigr\}$ for $\theta \in \Theta$. Then
    \begin{align*}
        \mathcal{M}(\delta,\mathcal{P}_{\Theta}, n^{-1}\|\cdot\|_2^2) \asymp \biggl( \frac{1}{n^{2/3}} + \frac{\log(1/\delta)}{n} \biggr) \wedge 1.
    \end{align*}
\end{prop}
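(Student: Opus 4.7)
The plan is to follow closely the template of the proof of Proposition~\ref{prop:mean-estimation}: establish the two additive summands in the lower bound separately, with the $\log(1/\delta)/n$ term coming from Corollary~\ref{cor:high-prob-le-cam-kl} and the $n^{-2/3}$ term coming from Assouad's lemma combined with Theorem~\ref{lemma:expectation-lb-to-high-prob-lb} and Proposition~\ref{cor:boost-delta}. A matching upper bound will be furnished by the (projected) isotonic least squares estimator, which is $\delta$-independent. The overall $\wedge 1$ is enforced by the trivial bound $n^{-1}\|\theta_1 - \theta_2\|_2^2 \leq 1$ for $\theta_1, \theta_2 \in \Theta \subseteq [0,1]^n$.

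For the quantile-dependent summand, I would apply Corollary~\ref{cor:high-prob-le-cam-kl} to the pair of constant monotone sequences $\theta_1 \coloneqq 0 \in \Theta$ and $\theta_2 \coloneqq \tau \mathbf{1} \in \Theta$, with $\tau \coloneqq \bigl\{\tfrac{1}{n}\log\bigl(\tfrac{1}{4\delta(1-\delta)}\bigr)\bigr\}^{1/2} \wedge 1$, so that $\tau \in [0,1]$ and $\theta_2 \in \Theta$. A direct calculation gives $\mathrm{KL}(P_1, P_2) = n\tau^2/2 \leq \log\bigl(\tfrac{1}{4\delta(1-\delta)}\bigr)$: the case $\tau < 1$ is immediate, while the case $\tau = 1$ forces $\log(1/\delta) \geq n$, whence $n\tau^2/2 = n/2 < \log\bigl(\tfrac{1}{4\delta(1-\delta)}\bigr)$. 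Combining with Theorem~\ref{lemma:important-lower-bound} therefore yields $\mathcal{M}_-(\delta) \gtrsim \bigl(\tfrac{\log(1/\delta)}{n}\bigr) \wedge 1$ for all $\delta \in (0, 1/4]$.

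For the $n^{-2/3}$ summand, I would carry out the canonical block-wise Assouad construction for isotonic regression. Set $K \coloneqq \lceil n^{1/3} \rceil$ and $m \coloneqq \lfloor n/K \rfloor$, and partition $[n]$ into blocks $B_1, \ldots, B_K$ (any residual indices being absorbed into $B_K$). For $\omega = (\omega_1, \ldots, \omega_K)^\top \in \{0,1\}^K$, define $\theta^{(\omega)}$ to be piecewise constant on the blocks, taking the value $\tfrac{k}{2K} + \tfrac{\omega_k}{4K}$ on $B_k$. The jump between consecutive blocks is at least $\tfrac{1}{4K} > 0$ and all values lie in $[0, 3/4]$, so $\theta^{(\omega)} \in \Theta$. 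For $\omega, \omega'$ differing only in the $k$th coordinate, $\|\theta^{(\omega)} - \theta^{(\omega')}\|_2^2 = m/(16K^2)$ and the corresponding Kullback--Leibler divergence is $m/(32K^2) \lesssim 1$ by the choice of $K$, so Pinsker's inequality bounds the corresponding total variation distance uniformly away from $1$. Assouad's lemma (Lemma~\ref{lemma:assouad}) then delivers a local minimax risk lower bound of order $n^{-2/3}$ on $\Theta_0 \coloneqq \{\theta^{(\omega)} : \omega \in \{0,1\}^K\}$. Since the diameter of $\Theta_0$ in the metric $d(\theta_1, \theta_2) \coloneqq n^{-1/2}\|\theta_1 - \theta_2\|_2$ is of order $n^{-1/3}$, Theorem~\ref{lemma:expectation-lb-to-high-prob-lb} with sufficiently small universal $\epsilon > 0$ yields $\mathcal{M}_-(\delta) \gtrsim n^{-2/3}$ for $\delta$ in some interval $(0, \delta_+)$, and a constant number of applications of Proposition~\ref{cor:boost-delta} extends this to $\delta \in (0, 1/4]$, losing only constant factors.

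For the upper bound, I would employ the $\delta$-independent projected isotonic least squares estimator $\hat{\theta} \in \argmin_{\tau \in \Theta} \|y - \tau\|_2^2$. Standard high-probability risk bounds for this estimator---deducible, for example, via a chaining argument on the monotone cone, or by composing an expected risk bound with Gaussian concentration applied to the Lipschitz map $y \mapsto \|\hat{\theta}(y) - \theta^*\|_2$---give $n^{-1}\|\hat{\theta} - \theta^*\|_2^2 \lesssim n^{-2/3} + \log(1/\delta)/n$ with probability at least $1-\delta$, while the trivial cap $n^{-1}\|\hat{\theta} - \theta^*\|_2^2 \leq 1$ follows from $\hat{\theta}, \theta^* \in [0,1]^n$. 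The principal technical obstacle, just as in Proposition~\ref{prop:mean-estimation}, is the careful numerical bookkeeping in the Assouad construction---in particular, a quantitative choice of $\epsilon$ in Theorem~\ref{lemma:expectation-lb-to-high-prob-lb} that renders $\delta_+$ large enough for a fixed (constant) number of iterations of Proposition~\ref{cor:boost-delta} to cover the full range $\delta \in (0, 1/4]$.
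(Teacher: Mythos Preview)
Your proposal is correct and follows essentially the same architecture as the paper's proof: Corollary~\ref{cor:high-prob-le-cam-kl} for the $\log(1/\delta)/n$ term, a local minimax risk bound combined with Theorem~\ref{lemma:expectation-lb-to-high-prob-lb} and Proposition~\ref{cor:boost-delta} for the $n^{-2/3}$ term, and projected isotonic least squares (via \citet{bellec2018sharp}) for the upper bound. Two minor differences are worth noting: (i) you handle the cap $\wedge 1$ in the Le~Cam step by truncating $\tau$ at $1$ directly, whereas the paper splits on whether $\delta \gtrless e^{-n}/3$ and invokes monotonicity of $\mathcal{M}(\cdot)$; (ii) you supply your own block-wise Assouad construction, whereas the paper simply cites the finite set $\Theta_0$ from \citet[Theorem~5.3]{chatterjee2015risk} together with the diameter estimate $D \asymp n^{1/6}$ there --- your construction is standard and works, though be careful that the last block $B_K$ may have size up to $m+K-1$, so the equality $\|\theta^{(\omega)}-\theta^{(\omega')}\|_2^2 = m/(16K^2)$ should be an inequality (this does not affect the argument).
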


\begin{proof}
    First consider the case where $\delta\in[e^{-n}/3, 1/4]$.
    Let $\theta^{(1)} \coloneqq 0$, $\theta^{(2)} \coloneqq \sqrt{\frac{1}{n}\log\bigl(\frac{1}{4\delta(1-\delta)}\bigr)} \,\bm{1}_n$ and $P^{(\ell)} \coloneqq \mathsf{N}_n(\theta^{(\ell)}, I_n)$ for $\ell\in\{1,2\}$. Then $\theta^{(\ell)} \in \Theta$ and $P^{(\ell)} \in \mathcal{P}_{\theta^{(\ell)}}$ for $\ell\in\{1,2\}$ and 
    \begin{align*}
        \mathrm{KL}(P^{(1)},P^{(2)}) = \frac{1}{2}\log\biggl(\frac{1}{4\delta(1-\delta)}\biggr) < \log\biggl( \frac{1}{4\delta(1-\delta)} \biggr).
    \end{align*}
    Therefore, by Corollary~\ref{cor:high-prob-le-cam-kl}, 
    \begin{align} \label{eq:isotonic-regression-delta-term1}
        \mathcal{M}(\delta,\mathcal{P}_{\Theta}, n^{-1}\|\cdot\|_2^2) \geq \frac{\|\theta^{(1)} - \theta^{(2)}\|_2^2}{4n} \geq \frac{\log(1/\delta)}{20n},
    \end{align}
    for $\delta\in[e^{-n}/3, 1/4]$. Next, for the case where $\delta\in(0, e^{-n}/3)$, we have by~\eqref{eq:isotonic-regression-delta-term1} that
    \begin{align}\label{eq:isotonic-regression-delta-term2}
        \mathcal{M}(\delta,\mathcal{P}_{\Theta}, n^{-1}\|\cdot\|_2^2) \geq \mathcal{M}(e^{-n},\mathcal{P}_{\Theta}, n^{-1}\|\cdot\|_2^2) \geq \frac{1}{20}.
    \end{align}
    Combining~\eqref{eq:isotonic-regression-delta-term1} and~\eqref{eq:isotonic-regression-delta-term2} yields
    \begin{align}\label{eq:isotonic-regression-delta-term}
        \mathcal{M}(\delta,\mathcal{P}_{\Theta}, n^{-1}\|\cdot\|_2^2) \geq \frac{1}{20} \biggl( \frac{\log(1/\delta)}{n} \wedge 1 \biggr),
    \end{align}
    for $\delta\in(0,1/4]$.

    For the second part of the lower bound, we let $\Theta_0 \subseteq \Theta$ denote the finite set constructed in the proof of \citet[][Theorem~5.3]{chatterjee2015risk} to obtain
    \begin{align*}
        \inf_{\hat{\theta} \in \hat{\Theta}} \max_{\theta_0\in\Theta_0} \sup_{P_{\theta_0} \in \mathcal{P}_{\theta_0}} \mathbb{E}_{P_{\theta_0}} \bigl( n^{-1} \|\hat{\theta} - \theta_0\|_2^2\bigr) \geq \frac{1}{2^9 \cdot 3^{1/3}} \cdot n^{-2/3}.
    \end{align*}
    In this construction, $D \coloneqq \sup_{\theta,\theta' \in \Theta_0} \| \theta - \theta' \|_2 \in \bigl[\frac{n^{1/6}}{2^3\cdot 3^{1/6}}, n^{1/6}\bigr]$.  Hence by Theorem~\ref{lemma:expectation-lb-to-high-prob-lb} with $\epsilon = 2^{-6}$, we obtain that for $\delta\in(0,1/1000]$,
    \begin{align*} 
        \mathcal{M}_- (\delta,\mathcal{P}_{\Theta},n^{-1}\|\cdot\|_2^2) \geq \frac{1}{2^{18} \cdot 3^{1/3}}\cdot n^{-2/3}.
    \end{align*}  
    Then, applying Proposition~\ref{cor:boost-delta} with $k=4$ in conjunction with Theorem~\ref{lemma:important-lower-bound}, we deduce that for $\delta\in(0,1/4]$, 
    \begin{align} \label{eq:isotonic-regression-expectation-term-boosted}
        \mathcal{M} (\delta,\mathcal{P}_{\Theta},n^{-1}\|\cdot\|_2^2) \geq \mathcal{M}_- (\delta,\mathcal{P}_{\Theta},n^{-1}\|\cdot\|_2^2) \geq  \frac{1}{2^{26} \cdot 3^{3}}\cdot n^{-2/3}.
    \end{align}
    Combining \eqref{eq:isotonic-regression-delta-term} and \eqref{eq:isotonic-regression-expectation-term-boosted} proves the lower bound.
  

    For the upper bound, let $\hat{\theta}$ denote the isotonic least squares estimator.  By \citet[Corollary~3.3]{bellec2018sharp}, for every $\delta \in (0,1]$, we have with probability at least $1-\delta$ that 
    \begin{align*}
        n^{-1}\|\hat{\theta} - \theta\|_2^2 \lesssim \frac{1}{n^{2/3}} + \frac{\log(1/\delta)}{n}.
    \end{align*}
    Now define $\tilde{\theta} = (\tilde{\theta}_1,\ldots,\tilde{\theta}_n)^\top \in \mathbb{R}^n$ by $\tilde{\theta}_i \coloneqq 0 \vee \hat{\theta}_i \wedge 1$ for $i\in[n]$. Then, since $n^{-1}\|\tilde{\theta} - \theta\|_2^2 \leq n^{-1}\|\hat{\theta} - \theta\|_2^2$ and $n^{-1}\|\tilde{\theta} - \theta\|_2^2 \leq 1$, we deduce that 
    \begin{align*}
        \mathcal{M}(\delta,\mathcal{P}_{\Theta}, n^{-1}\|\cdot\|_2^2) \lesssim \biggl( \frac{1}{n^{2/3}} + \frac{\log(1/\delta)}{n} \biggr) \wedge 1
    \end{align*}
    for $\delta\in(0,1]$.
\end{proof}

\subsection{Stochastic convex optimisation} \label{sec:SCO}
As our final example, we move away from the traditional setting of statistical estimation and consider stochastic convex optimisation \citep{duchi2018introductory}, where the goal is to understand the iteration complexity of minimising a convex function in expectation, given access only to queries at random observations.  To do so, we require some preliminaries.  Let $R > 0$, let $\mathcal{X} \subseteq \mathbb{R}^d$ be closed and convex such that $\mathbb{B}_2^d(R) \subseteq \mathcal{X}$, and let $\mathcal{Y}$ be a measurable space.  For $P \in \mathcal{Q}(\mathcal{Y})$ and $\gamma > 0$, let $\mathcal{F}(P,\gamma)$ denote the set of functions $f: \mathbb{R}^d \times \mathcal{Y} \rightarrow \mathbb{R}$ for which 
\begin{enumerate}
    \item[\emph{(i)}]  $f(x,\cdot): \mathcal{Y} \to \mathbb{R}$ is $P$-integrable for each $x \in \mathcal{X}$;
    \item[\emph{(ii)}] $f(\cdot,y): \mathbb{R}^d \to \mathbb{R}$ is convex for each $y \in \mathcal{Y}$ with subdifferential at $x \in \mathbb{R}^d$ denoted by $\partial_x f(x,y)$;
    \item[\emph{(iii)}] for each $x \in \mathcal{X}$, define the function $g_x:\mathcal{Y} \rightarrow \mathbb{R}^d$ by $g_x(y) \coloneqq \argmin_{x' \in \partial_x f(x, y)} \| x' \|_2$, and assume that $\sup_{x\in\mathcal{X}} \mathbb{E}_P\bigl\{ \|g_x(Y)\|_2^2 \bigr\} \leq \gamma^2$.
\end{enumerate}
The selected subgradient $g_x$ in part~\emph{(iii)} of this definition is well-defined as the subdifferential is compact and convex \citep[][Theorem~23.4 and p.~215]{rockafellar1997convex}, and moreover $g_x$ is measurable on $\mathcal{Y}$ for each $x \in \mathcal{X}$, by~\citet[][Theorem~18.19]{aliprantis2007infinite}.  For $f \in \mathcal{F}(P,\gamma)$, define $F_P: \mathcal{X} \rightarrow \mathbb{R}$ by $F_P(x) \coloneqq \mathbb{E}_{P}\{f(x; Y)\}$, so that $F_P$ is convex on $\mathcal{X}$.  The \emph{optimality gap} $L^{\mathsf{opt}}(\cdot; P, f): \mathcal{X} \rightarrow [0, \infty)$ is defined to be $L^{\mathsf{opt}}(x; P, f) \coloneqq F_P(x) - \inf_{x' \in \mathcal{X}} F_P(x')$. For $P_1,P_2 \in \mathcal{Q}(\mathcal{Y})$ and $f \in \mathcal{F}(P_1,\gamma) \cap \mathcal{F}(P_2,\gamma)$, the \emph{$f$-separation} between $P_1$ and $P_2$ is defined as 
\begin{align*}
    \mathsf{sep}(P_1, P_2; f) \coloneqq \sup\Bigl\{\Delta \geq 0:\; & L^{\mathsf{opt}}(x; P_2, f) > \Delta \text{ for all $x \in \mathcal{X}$ with } L^{\mathsf{opt}}(x; P_1, f) \leq \Delta\Bigr\}. 
\end{align*}
Now define the set 
\begin{align*}
\mathcal{G}(\gamma,R) \coloneqq \Bigl\{(P,f) : P \in \mathcal{Q}(\mathcal{Y}),\, f \in \mathcal{F}(P,\gamma)\text{ and } \| x^{\star} \|_2 \leq R \text{ for some } x^{\star} \in \argmin_{x' \in \mathcal{X}} F_P(x')\Bigr\}.
\end{align*}
For each $t \in \mathbb{N}$, let $\hat{\mathcal{X}}_t$ denote the set of all measurable functions from $\mathcal{Y}^t$ to $\mathcal{X}$; in particular, this includes methods such as stochastic (sub)gradient descent (SGD) or stochastic versions of Newton's algorithm.  For $\delta \in (0,1]$ and $T \in \mathbb{N}$, the minimax quantile $\mathcal{M}^{\mathsf{opt}}$ for the optimality gap~\citep[cf.][Eq.~(1)]{carmon2024price} is given by
\begin{align*}
    \mathcal{M}^{\mathsf{opt}}(\delta;T) \coloneqq \inf_{\hat{x}_{\delta} \in \hat{\mathcal{X}}_T} \sup_{(P, f) \in \mathcal{G}(\gamma, R)} \mathrm{Quantile}\bigl(1 - \delta; L^{\mathsf{opt}}(\hat{x}_{\delta}; P,f), P^{\otimes T}\bigr).
\end{align*}
By a virtually identical argument to that given in the proof of Theorem~\ref{lemma:important-lower-bound}, 
\begin{align}\label{ineq:M-opt-lb}
    \mathcal{M}^{\mathsf{opt}}(\delta; T) \geq \inf\Bigl\{r \in [0, \infty):  \inf_{\hat{x}_{\delta} \in \hat{\mathcal{X}}_T} \sup_{(P, f) \in \mathcal{G}(\gamma, R)} P^{\otimes T}\bigl\{L^{\mathsf{opt}}(\hat{x}_{\delta};P,f) > r\bigr\} \leq \delta \Bigr\} \eqqcolon \mathcal{M}_{-}^{\mathsf{opt}}(\delta; T).
\end{align}
We next present a variant of Lemma~\ref{lemma:high-prob-le-cam-tv}, specialised to the current optimisation setting.  It can also be understood as a high-probability variant of, e.g.,~\citet[][Eq. (5.2.3)]{duchi2018introductory}.

\begin{lemma}\label{lem:SCO-Le-Cam}
Let $T \in \mathbb{N}$, $\gamma, R > 0$ and $\delta\in(0,1/2)$.  Suppose $(P_1,f), (P_2,f) \in \mathcal{G}(\gamma, R)$ and $\mathrm{TV}(P_1^{\otimes T}, P_2^{\otimes T}) < 1 - 2\delta$.  Then $\mathcal{M}^{\mathsf{opt}}_{-}(\delta; T) \geq \mathsf{sep}(P_1, P_2; f)$.
\end{lemma}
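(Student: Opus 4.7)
The plan is to adapt the proof of the high-probability Le Cam lemma (Lemma~\ref{lemma:high-prob-le-cam-tv}) to the stochastic optimisation setting, using the asymmetric $f$-separation in place of the parameter separation $d(\theta_1, \theta_2)/2$.  First I would fix $\Delta \in [0, \mathsf{sep}(P_1, P_2; f))$, so that by the definition of $\mathsf{sep}$, every $x \in \mathcal{X}$ with $L^{\mathsf{opt}}(x; P_1, f) \leq \Delta$ satisfies $L^{\mathsf{opt}}(x; P_2, f) > \Delta$.  Fixing an arbitrary estimator $\hat{x} \in \hat{\mathcal{X}}_T$, I would define the events
\[
A \coloneqq \bigl\{L^{\mathsf{opt}}(\hat{x}; P_1, f) > \Delta\bigr\}, \qquad B \coloneqq \bigl\{L^{\mathsf{opt}}(\hat{x}; P_2, f) > \Delta\bigr\}
\]
in $\mathcal{Y}^T$, and observe that the separation property forces $A^c \subseteq B$.

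Next, by the standard Le Cam inequality, $P_1^{\otimes T}(A) + P_2^{\otimes T}(A^c) \geq 1 - \mathrm{TV}(P_1^{\otimes T}, P_2^{\otimes T})$; combined with $P_2^{\otimes T}(A^c) \leq P_2^{\otimes T}(B)$ and the hypothesis $\mathrm{TV}(P_1^{\otimes T}, P_2^{\otimes T}) < 1 - 2\delta$, this gives $P_1^{\otimes T}(A) + P_2^{\otimes T}(B) > 2\delta$, whence $\max\{P_1^{\otimes T}(A), P_2^{\otimes T}(B)\} > \delta$.  Since both $(P_1, f), (P_2, f) \in \mathcal{G}(\gamma, R)$, this yields the uniform-in-$\hat{x}$ bound
\[
\sup_{(P, f') \in \mathcal{G}(\gamma, R)} P^{\otimes T}\bigl\{L^{\mathsf{opt}}(\hat{x}; P, f') > \Delta\bigr\} \geq \frac{1 - \mathrm{TV}(P_1^{\otimes T}, P_2^{\otimes T})}{2} > \delta,
\]
where crucially the lower bound on the right-hand side is independent of $\hat{x}$.

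Finally, taking the infimum over $\hat{x} \in \hat{\mathcal{X}}_T$ preserves the strict inequality, and the monotonicity of $r \mapsto P^{\otimes T}\{L^{\mathsf{opt}}(\hat{x}; P, f') > r\}$ in $r$ extends the bound to every $r \leq \Delta$.  Hence $\Delta$ does not lie in the defining set of $\mathcal{M}_{-}^{\mathsf{opt}}(\delta; T)$ given in~\eqref{ineq:M-opt-lb}, so $\mathcal{M}_{-}^{\mathsf{opt}}(\delta; T) \geq \Delta$; letting $\Delta \nearrow \mathsf{sep}(P_1, P_2; f)$ completes the argument.  The only genuine subtlety is that the definition of $\mathsf{sep}$ is asymmetric in $(P_1, P_2)$, so one must carefully orient the containment as $A^c \subseteq B$ rather than in the reverse direction; once this is in place the proof is a direct transcription of the two-point method to the present setting, with the constant $(1 - \mathrm{TV})/2$ ensuring the infimum over estimators inherits the required strictness.
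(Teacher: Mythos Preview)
Your proof is correct and follows essentially the same route as the paper's: both fix a level below $\mathsf{sep}(P_1,P_2;f)$, use the separation property to obtain the containment $\{L^{\mathsf{opt}}(\hat{x};P_1,f)\leq \Delta\}\subseteq\{L^{\mathsf{opt}}(\hat{x};P_2,f)>\Delta\}$, apply the total variation bound to get the estimator-free lower bound $(1-\mathrm{TV})/2>\delta$, and then let the level approach $\mathsf{sep}$. The only cosmetic difference is that the paper writes the Le Cam step as a chain of inequalities on the averaged probabilities, whereas you phrase it via the events $A,B$ and the inclusion $A^c\subseteq B$; the content is identical.
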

\begin{proof}
    Given $\epsilon > 0$, choose $r \in [0,\infty)$ such that $r > \mathsf{sep}(P_1, P_2; f) - \epsilon$ and such that for every $x \in \mathcal{X}$ we have $L^{\mathsf{opt}}(x; P_2, f) > r$ whenever $L^{\mathsf{opt}}(x; P_1, f) \leq r$.  Then
    \begin{align*}
        \inf_{\hat{x}_{\delta} \in \hat{\mathcal{X}}_T} \sup_{(P, f) \in \mathcal{G}(\gamma, R)} &P^{\otimes T}\bigl\{L^{\mathsf{opt}}(\hat{x}_{\delta};P,f) > r\bigr\} \\
        &\geq \frac{1}{2} \inf_{\hat{x}_{\delta} \in \hat{\mathcal{X}}_T} \Bigl\{P_1^{\otimes T}\bigl\{L^{\mathsf{opt}}(\hat{x}_{\delta};P_1,f) > r\bigr\} + P_2^{\otimes T}\bigl\{L^{\mathsf{opt}}(\hat{x}_{\delta};P_2,f) > r\bigr\}\Bigr\}\\
        &\geq \frac{1}{2} \inf_{\hat{x}_{\delta} \in \hat{\mathcal{X}}_T} \Bigl\{ 1 - P_1^{\otimes T}\bigl\{L^{\mathsf{opt}}(\hat{x}_{\delta};P_2,f) > r\bigr\} + P_2^{\otimes T}\bigl\{L^{\mathsf{opt}}(\hat{x}_{\delta};P_2,f) > r\bigr\}\Bigr\}\\
        &\geq \frac{1}{2} \bigl\{1 - \mathrm{TV}(P_1^{\otimes T}, P_2^{\otimes T})\bigr\} > \delta.
    \end{align*}
    We deduce that $\mathcal{M}_{-}(\delta,T) \geq r > \mathsf{sep}(P_1, P_2; f) - \epsilon$.  Since $\epsilon > 0$ was arbitrary, the result follows.
\end{proof}
\begin{figure}
    \begin{center}
\begin{tikzpicture}{scale=0.9}
\node[text width=60pt] at (7.65,3.9) {{\footnotesize gradient $\gamma s$}};
  \draw [-latex] (7.3,4.1) to [out=70,in=300] (7.18,4.5);
\node[text width=60pt] at (8.7,7.2) {{\footnotesize gradient $\gamma$}};
\draw [-latex] (8.55,7) to [out=290,in=160] (9.05,6.7);
\begin{axis}[
    legend pos=north west,
    axis lines = left,
    x label style={at={(axis description cs:1.04,0.084)},anchor=north},
    xlabel = \(x\),
    xmin = -2.4, xmax = 2.4,
    ymin = -0.3, ymax = 4.9,
    axis x line shift=-0.3,
    yticklabel={\empty},
    xticklabel={\empty},
    extra x ticks={-1.5, 0, 1.5},
    extra x tick labels={$-R$, $0$, $R$},
    extra y ticks = {0, 1.5},
    extra y tick labels ={$0$, $\gamma s R$},
    legend style={at={(0.12,1)},anchor=north west},
]
\addplot [
    domain=-2.5:2.5, 
    samples=1000, 
    color=black,
    line width = 1.25pt,
]
{(x < - 1.5) * (-2)*x + (-1.5 < x) * (x < 1.5) * (-1) * (x - 1.5) + (x > 1.5) * (x - 1.5)};
\addlegendentry{$L^{\mathsf{opt}}(x; P_2, f)$}

\addplot [
    domain=-2.5:2.5, 
    samples=1000, 
    color=gray,
    line width = 1.25pt,
]
{(x < - 1.5) * (-1) * (x + 1.5) + (-1.5 < x) * (x < 1.5)  * (x + 1.5) + (x > 1.5) * 2 * x};
\addlegendentry{$L^{\mathsf{opt}}(x; P_1, f)$}
\addplot [
    domain=-2.5:2.5, 
    samples=1000, 
    color=gray,
    line width = 1.25pt,
    dashed,
]
{1.5};
\addlegendentry{$\mathsf{sep}(P_1, P_2;f)$}
\end{axis}
\draw[-Latex,thick] (-0.1,0.5)--(10.48,0.5);
\draw[-Latex,thick] (0,-0.1)--(0,8.82);
\end{tikzpicture}
    \end{center}
    \caption{\label{Fig:Sep} Illustration of the functions in the proof of Proposition~\ref{prop:SCO-rate} with $f:\mathcal{X} \times \{-1,1\}$ given by $f(x,y) = \gamma|x+yR|$.}
\end{figure}
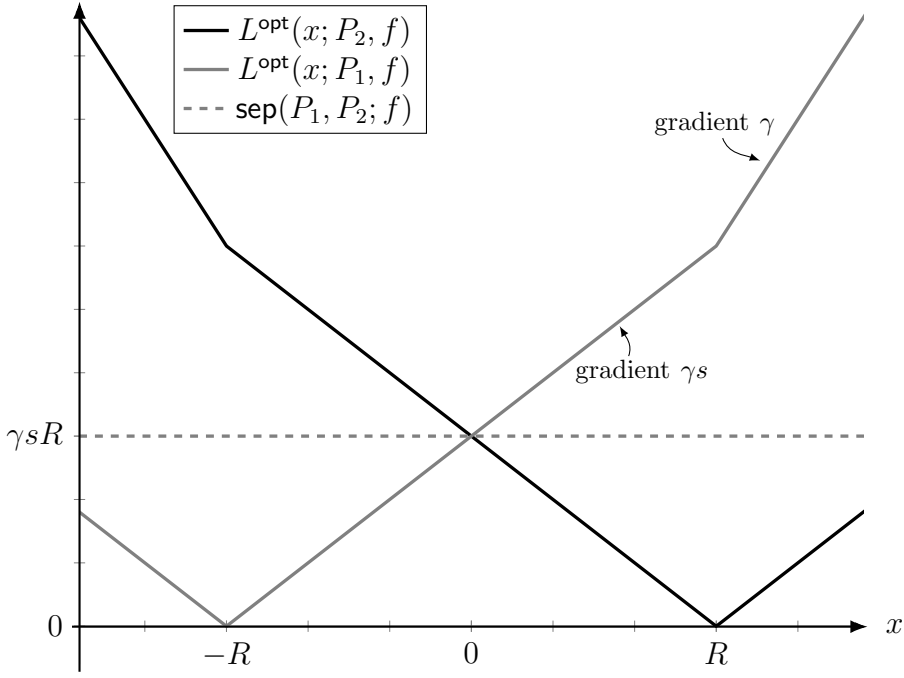

The following result, first obtained by~\citet[][Proposition 1]{carmon2024price}, then provides tight bounds on the minimax $(1-\delta)$th quantile over $\mathcal{G}(\gamma, R)$.  The upper bound is attained by clipped SGD, where the truncation level is $\delta$-dependent.  We provide here a short proof of the lower bound using our techniques in conjunction with the lower bound construction of~\citet[Eq. (5.2.7)]{duchi2018introductory}.
\begin{prop}\label{prop:SCO-rate}
Let $T \in \mathbb{N}$, $\gamma, R > 0$ and $\delta \in (0, 1/4]$.  Then
\[
\mathcal{M}^{\mathsf{opt}}(\delta; T) \asymp \gamma R \cdot\biggl\{\sqrt{\frac{\log(1/\delta)}{T}} \wedge 1\biggr\}.
\]
\end{prop}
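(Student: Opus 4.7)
The plan is to combine Lemma~\ref{lem:SCO-Le-Cam} with a one-dimensional Le Cam construction, following~\citet[Eq.~(5.2.7)]{duchi2018introductory}, for the lower bound, and to invoke the clipped SGD analysis of~\citet{carmon2024price} for the matching upper bound.

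For the lower bound, I would take $\mathcal{Y} \coloneqq \{-1, +1\}$ and define $f : \mathbb{R}^d \times \mathcal{Y} \to \mathbb{R}$ by $f(x, y) \coloneqq \gamma |e_1^\top x + yR|$, where $e_1 \in \mathbb{R}^d$ is the first standard basis vector. Each $f(\cdot, y)$ is convex and $\gamma$-Lipschitz, so $f \in \mathcal{F}(P, \gamma)$ for every $P \in \mathcal{Q}(\mathcal{Y})$. For $s \in (0, 1/2]$, I would introduce the Bernoulli-type distributions $P_1, P_2$ on $\mathcal{Y}$ given by $P_1(\{1\}) \coloneqq (1+s)/2 \eqqcolon 1 - P_2(\{1\})$. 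A short calculation then shows that $F_{P_\ell}$ attains its minimum on $\mathcal{X}$ at a point of Euclidean norm $R$ for $\ell \in \{1,2\}$, so $(P_\ell, f) \in \mathcal{G}(\gamma, R)$.

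The crux is to compute $\mathsf{sep}(P_1, P_2; f)$ and to control the total variation distance. Writing $u \coloneqq e_1^\top x$, a direct case analysis on the sign of $u \pm R$ yields the piecewise-linear identities $L^{\mathsf{opt}}(x; P_1, f) = \gamma s (u + R)$ and $L^{\mathsf{opt}}(x; P_2, f) = \gamma s (R - u)$ for $u \in [-R, R]$, with larger values for $|u| > R$. Consequently $L^{\mathsf{opt}}(x; P_1, f) + L^{\mathsf{opt}}(x; P_2, f) \geq 2 \gamma s R$ for all $x \in \mathcal{X}$, which combined with the boundary behaviour leads to $\mathsf{sep}(P_1, P_2; f) = \gamma s R$; see Figure~\ref{Fig:Sep}. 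On the information-theoretic side, $\mathrm{KL}(P_1, P_2) = s \log\bigl((1+s)/(1-s)\bigr) \leq 4 s^2$ for $s \in [0, 1/2]$, so tensorisation combined with the Bretagnolle--Huber inequality yields
\begin{align*}
\mathrm{TV}(P_1^{\otimes T}, P_2^{\otimes T}) \leq \sqrt{1 - e^{-4 T s^2}}.
\end{align*}

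To conclude, I would select $s \coloneqq c \bigl(\sqrt{\log(1/\delta)/T} \wedge 1\bigr)$ for a sufficiently small universal constant $c > 0$, which ensures that $4 T s^2 < \log\bigl(1/(4\delta(1-\delta))\bigr)$ and hence $\mathrm{TV}(P_1^{\otimes T}, P_2^{\otimes T}) < 1 - 2\delta$ for $\delta \in (0, 1/4]$. Applying Lemma~\ref{lem:SCO-Le-Cam} together with~\eqref{ineq:M-opt-lb} then yields $\mathcal{M}^{\mathsf{opt}}(\delta; T) \geq \gamma s R \gtrsim \gamma R \bigl(\sqrt{\log(1/\delta)/T} \wedge 1\bigr)$. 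The main obstacle is more bookkeeping than conceptual: verifying that the growth of $L^{\mathsf{opt}}(\cdot; P_\ell, f)$ outside $[-R, R]$ does not produce an $x \in \mathcal{X}$ that is simultaneously a near-optimiser of both $F_{P_1}$ and $F_{P_2}$, which would reduce $\mathsf{sep}(P_1, P_2; f)$ below $\gamma s R$. For the matching upper bound, I would cite the clipped SGD analysis of~\citet[Proposition~1]{carmon2024price}, augmented by the trivial bound $\mathcal{M}^{\mathsf{opt}}(\delta; T) \leq \gamma R$ obtained from the iterate $\hat{x} \equiv 0$, valid because the $\gamma$-Lipschitzness of $F_P$ on $\mathcal{X}$ (via Jensen's inequality applied to $g_x$) together with $\|x^\star\|_2 \leq R$ gives $L^{\mathsf{opt}}(0; P, f) \leq \gamma R$.
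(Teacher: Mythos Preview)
Your proposal is correct and follows essentially the same approach as the paper: the same one-dimensional Le Cam construction with $f(x,y)=\gamma|e_1^\top x + yR|$ and biased Bernoulli distributions, the same separation computation $\mathsf{sep}(P_1,P_2;f)=\gamma s R$, the same Bretagnolle--Huber step, and the same appeal to clipped SGD plus the trivial estimator for the upper bound. The only cosmetic differences are that the paper first handles $d=1$ and then embeds, and chooses $s$ implicitly via $s=g^{-1}\bigl(\frac{1}{2T}\log\frac{1}{4\delta(1-\delta)}\bigr)$ before lower-bounding $g^{-1}$, whereas you fix $s=c\bigl(\sqrt{\log(1/\delta)/T}\wedge 1\bigr)$ directly and upper-bound $g(s)\leq 4s^2$; both routes are equivalent.
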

\begin{proof}
For the lower bound, we initially consider the case $d = 1$ where $\mathcal{X} \supseteq [-R,R]$.  Take $\mathcal{Y} = \{-1, 1\}$ and consider the function $f: \mathbb{R} \times \mathcal{Y} \rightarrow \mathbb{R}$ given by $f(x, y) \coloneqq \gamma  \lvert x + yR \rvert$.  Let $g: [0,1) \rightarrow \mathbb{R}$ be the strictly increasing function given by $g(x) \coloneqq x\log\bigl(\frac{1 + x}{1 - x}\bigr)$, and set $s \coloneqq g^{-1}\bigl(\frac{1}{2T}\log\bigl(\frac{1}{4\delta(1 - \delta)}\bigr)\bigr) \in [0,1)$.  Now let $P_1 = \mathsf{Ber}(\frac{1 + s}{2})$ and $P_2 = \mathsf{Ber}(\frac{1 - s}{2})$, so that $\mathrm{KL}(P_1, P_2) = g(s)$, 
\[
\inf_{x \in \mathcal{X}} F_{P_1}(x) = F_{P_1}(-R) = \gamma R(1 - s) = F_{P_2}(R) = \inf_{x \in \mathcal{X}} F_{P_2}(x),
\]
and $(P_1,f), (P_2,f) \in \mathcal{G}(\gamma,R)$.  Further, $\mathsf{sep}(P_1, P_2;f) = F_{P_1}(0) - \gamma R (1 - s) = \gamma s R$; see Figure~\ref{Fig:Sep}.    Applying the Bretagnolle--Huber inequality, we deduce that
\[
\mathrm{TV}\bigl(P_1^{\otimes T}, P_2^{\otimes T}\bigr) \leq \sqrt{1 - \exp\bigl\{-T \cdot \mathrm{KL}(P_1, P_2)\bigr\}} = \bigl\{1 - \sqrt{4\delta(1-\delta)}\bigr\}^{1/2}
< 1 - 2\delta.
\]
Thus, by Lemma~\ref{lem:SCO-Le-Cam}, $\mathcal{M}_{-}^{\mathsf{opt}}(\delta;T) \geq \gamma s R$.  Now consider the case where $\log(1/\delta) \leq T$.  Then, since $\delta \in (0,1/4]$ and $g^{-1}$ is a strictly increasing function, 
\[
s \geq g^{-1}\biggl(\frac{\log(1/\delta)}{10T}\biggr) \geq  \sqrt{\frac{\log(1/\delta)}{30T}},
\]
where the final inequality follows since $g(x) \leq 3x^2$ for $x \in [0,4/5]$, so that $g^{-1}(w) \geq (w/3)^{1/2}$ for $w \in [0,7/4]$.  On the other hand, when $\log(1/\delta) > T$, we have $s > g^{-1}(1/10) \geq 1/5$.  Thus, by~\eqref{ineq:M-opt-lb},
\[
\mathcal{M}^{\mathsf{opt}}(\delta;T) \geq \mathcal{M}_{-}^{\mathsf{opt}}(\delta;T) \geq \frac{\gamma R}{30^{1/2}} \cdot \biggl\{\sqrt{\frac{\log(1/\delta)}{T}} \wedge 1\biggr\}.
\]
Turning to the more general case where $d > 1$ and $\mathcal{X} \supseteq \mathbb{B}_2^{d}(R)$, we consider the function $\tilde{f}: \mathbb{R}^d \times \mathcal{Y}$ given by $\tilde{f}(x, y) \coloneqq f(x^{\top} e_1, y)$, where $f$ is as defined above.  The same lower bound then follows.   

To obtain a matching upper bound, we set $\hat{x}_{T} = 0$ if $\log(1/\delta) > T$ and apply clipped SGD with a $\delta$-dependent clipping parameter otherwise.  The conclusion then follows from~\citet[Theorem 4]{carmon2024price}.
\end{proof}

\section{Proofs from Section~\ref{sec:setup}} \label{sec:proofs-setup}

\begin{proof}[Proof of Proposition~\ref{lemma:minimax-quantile-to-minimax-risk}]
    For $\theta\in\Theta$, $P_{\theta} \in \mathcal{P}_{\theta}$ and $\hat{\theta} \in \hat{\Theta}$, define $r(P_{\theta},\hat{\theta}) \coloneqq \mathrm{Quantile} \bigl( 1-\delta; P_{\theta}, L(\hat{\theta}, \theta) \bigr)$. Then 
    \begin{align*}
        \inf_{\hat{\theta} \in \hat{\Theta}} \sup_{\theta \in \Theta} \sup_{P_{\theta} \in \mathcal{P}_{\theta}} \mathbb{E}_{P_{\theta}} L(\hat{\theta}, \theta) &\geq \inf_{\hat{\theta} \in \hat{\Theta}} \sup_{\theta \in \Theta} \sup_{P_{\theta} \in \mathcal{P}_{\theta}}  P_{\theta}\bigl\{ L(\hat{\theta},\theta) \geq r(P_{\theta},\hat{\theta}) \bigr\} \cdot r(P_{\theta},\hat{\theta}) \\
        &\geq \inf_{\hat{\theta} \in \hat{\Theta}} \sup_{\theta \in \Theta} \sup_{P_{\theta} \in \mathcal{P}_{\theta}} \delta \cdot r(P_{\theta},\hat{\theta}) = \delta \mathcal{M}(\delta),
    \end{align*}
    as desired.
\end{proof}

\begin{proof}[Proof of Theorem~\ref{lemma:important-lower-bound}]
We begin with the lower bound on $\mathcal{M}(\delta)$.  For $\theta\in\Theta$, $P_{\theta} \in \mathcal{P}_{\theta}$ and $\hat{\theta} \in \hat{\Theta}$,
    define $r(P_{\theta},\hat{\theta}) \coloneqq \mathrm{Quantile} \bigl( 1-\delta; P_{\theta} , L(\hat{\theta}, \theta) \bigr)$, so that $P_{\theta} \bigl\{ L(\hat{\theta}, \theta) > r(P_{\theta},\hat{\theta}) \bigr\} \leq \delta$.  Then
    \begin{align*}
        P_{\theta} \biggl\{ L(\hat{\theta}, \theta) > \sup_{\theta'\in\Theta} \sup_{P_{\theta'}\in\mathcal{P}_{\theta'}} r(P_{\theta'},\hat{\theta}) \biggr\} \leq \delta,
    \end{align*}
    for all $\theta\in\Theta$ and $P_{\theta} \in \mathcal{P}_{\theta}$, so that
    \begin{align} \label{eq:exchange-inf-and-sup}
        \sup_{\theta\in\Theta} \sup_{P_{\theta} \in \mathcal{P}_{\theta}} r(P_{\theta},\hat{\theta}) \geq \inf \biggl\{ r \in [0,\infty):\sup_{\theta\in\Theta} \sup_{P_{\theta} \in \mathcal{P}_{\theta}} P_{\theta} \bigl\{ L(\hat{\theta}, \theta) > r \bigr\} \leq \delta \biggr\} \eqqcolon r(\hat{\theta}).
    \end{align}
    Further, if $r > \inf_{\hat{\theta} \in \hat{\Theta}} r(\hat{\theta})$, then there exists $\tilde{\theta} \in \hat{\Theta}$ and $\epsilon>0$ such that $r(\tilde{\theta}) \leq r-\epsilon$, so 
    \begin{align*}
        \inf_{\hat{\theta} \in \hat{\Theta}} \sup_{\theta\in\Theta} \sup_{P_{\theta} \in \mathcal{P}_{\theta}} P_{\theta} \bigl\{ L(\hat{\theta}, \theta) > r \bigr\} &\leq \sup_{\theta\in\Theta} \sup_{P_{\theta} \in \mathcal{P}_{\theta}} P_\theta \bigl\{ L(\tilde{\theta}, \theta) > r \bigr\}\\
        &\leq \sup_{\theta\in\Theta} \sup_{P_{\theta} \in \mathcal{P}_{\theta}} P_\theta \bigl\{ L(\tilde{\theta}, \theta) > r(\tilde{\theta}) + \epsilon \bigr\} \leq \delta. \numberthis \label{eq:exchange-inf-and-inf}
    \end{align*}    
    Hence, 
    \begin{align*}
        \mathcal{M}_-(\delta) &= \inf \Bigl\{ r\in[0,\infty) : \inf_{\hat{\theta} \in \hat{\Theta}} \sup_{\theta\in\Theta} \sup_{P_{\theta} \in \mathcal{P}_{\theta}} P_{\theta} \bigl\{ L(\hat{\theta}, \theta) > r \bigr\}\leq \delta \Bigr\}\\
        \overset{(i)}&{\leq} \inf\Bigl\{ r \in [0,\infty): r > \inf_{\hat{\theta} \in \hat{\Theta}} r(\hat{\theta}) \Bigr\}\\ &= \inf_{\hat{\theta}\in \hat{\Theta}} r(\hat{\theta})
        \overset{(ii)}{\leq} \inf_{\hat{\theta} \in \hat{\Theta}} \sup_{\theta\in\Theta} \sup_{P_{\theta} \in \mathcal{P}_{\theta}} r(P_{\theta},\hat{\theta}) = \mathcal{M}(\delta),
    \end{align*}
    where inequality $(i)$ follows from~\eqref{eq:exchange-inf-and-inf} and inequality $(ii)$ follows from~\eqref{eq:exchange-inf-and-sup}.

For the upper bound, let $\epsilon>0$ and $\xi\in(0,\delta)$.  There exists $r\leq \mathcal{M}_-(\delta-\xi) + \epsilon$ such that
    \begin{align*}
        \inf_{\hat{\theta} \in \hat{\Theta}} \sup_{\theta\in\Theta} \sup_{P_{\theta} \in \mathcal{P}_{\theta}} P_{\theta} \bigl\{ L(\hat{\theta}, \theta) > r \bigr\} \leq \delta - \xi.
    \end{align*}
    It follows that there exists $\tilde{\theta} \in \hat{\Theta}$ such that
    \begin{align*}
        \sup_{\theta\in\Theta} \sup_{P_{\theta} \in \mathcal{P}_{\theta}} P_{\theta} \bigl\{ L(\tilde{\theta}, \theta) > r \bigr\} \leq \delta.
    \end{align*}
    Hence, $\mathcal{M}(\delta) \leq r \leq \mathcal{M}_-(\delta-\xi)+\epsilon$.
    Since $\epsilon>0$ was arbitrary, we deduce that $\mathcal{M}(\delta) \leq \mathcal{M}_-(\delta-\xi)$.

For the final claim, observe that $\delta \mapsto \mathcal{M}_-(\delta)$ is a decreasing function on $(0,1]$, so it has at most countably many discontinuities.  But if $\mathcal{M}_-(\cdot)$ is continuous at $\delta$, then we must have $\mathcal{M}(\delta) = \mathcal{M}_-(\delta)$, by~\eqref{Eq:LowerUpperBound}.
\end{proof}

\begin{proof}[Proof of Lemma~\ref{lemma:high-prob-le-cam-tv}]
    There is nothing to prove in the case $g(\eta)=0$, so we may assume without loss of generality that $g(\eta) > 0$.
    For $a\in(0,1)$, define $\tilde{g}_a : [0,\infty) \to \{0,1\}$ by $\tilde{g}_a(x) \coloneqq \mathbbm{1}_{\{g(x) > ag(\eta)\}}$ for $x\in[0,\infty)$, so that $\tilde{g}_a$ is an increasing function. By applying Le Cam's two-point lemma (stated as Lemma~\ref{lemma:le-cam} for convenience), we obtain
    \begin{align*}
        \inf_{\hat{\theta} \in \hat{\Theta}} \sup_{\theta\in\Theta} \sup_{P_{\theta}\in\mathcal{P}_{\theta}} P_{\theta} \bigl\{ L(\hat{\theta}, \theta) > ag(\eta) \bigr\} &= \inf_{\hat{\theta} \in \hat{\Theta}} \sup_{\theta\in\Theta} \sup_{P_{\theta}\in\mathcal{P}_{\theta}} \mathbb{E}_{P_{\theta}} \bigl\{ \tilde{g}_a\bigl( d(\hat{\theta}, \theta) \bigr) \bigr\}\\
        &\geq \frac{\tilde{g}_a(\eta)}{2} \bigl( 1- \mathrm{TV}(P_1, P_2) \bigr) > \delta.
    \end{align*}
    We deduce that $\mathcal{M}_-(\delta)\geq ag(\eta)$.  Since $a\in(0,1)$ was arbitrary, the conclusion follows.
\end{proof}

\begin{proof}[Proof of Lemma~\ref{lemma:fano's-minimax-quantile}]
    Again, we may assume without loss of generality that $g(\eta) > 0$.  
    For $a \in(0,1)$, define $\tilde{g}_a : [0,\infty) \to \{0,1\}$ by $\tilde{g}_a(x) \coloneqq \mathbbm{1}_{\{g(x) > ag(\eta)\}}$. By Fano's lemma (stated as Lemma~\ref{lemma:fano} for convenience), 
    \begin{align*}
        \inf_{\hat{\theta} \in \hat{\Theta}} \sup_{\theta\in\Theta} \sup_{P_{\theta}\in\mathcal{P}_{\theta}} &P_{\theta} \bigl\{ L(\hat{\theta}, \theta) > ag(\eta) \bigr\} = \inf_{\hat{\theta} \in \hat{\Theta}} \sup_{\theta\in\Theta} \sup_{P_{\theta}\in\mathcal{P}_{\theta}} \mathbb{E}_{P_{\theta}} \bigl\{ \tilde{g}_a\bigl( d(\hat{\theta}, \theta) \bigr) \bigr\}\\
        &\geq \tilde{g}_a(\eta) \biggl\{ 1 - \frac{M^{-1} \inf_{Q\in\mathcal{Q}(\mathcal{X})} \sum_{j=1}^M \mathrm{KL}(P_j,Q) + \log(2-M^{-1})}{\log M} \biggr\} \geq \epsilon > \delta.
    \end{align*}
    Since $a\in(0,1)$ was arbitrary, the conclusion follows.
\end{proof}

\begin{figure}
    \centering
    \begin{tikzpicture}[thick,scale=1, every node/.style={scale=1}]
    
    \begin{scope}
    \clip (-8,2) rectangle (8,8);
    \draw[fill=gray!20] (0,2) circle(3);
    \end{scope}

    \begin{scope}
    \clip (-8,2) rectangle (8,8);
    \draw[fill=white] (0,-2) circle(5);
    \end{scope}
    
    \begin{scope}
    \clip (-8,2) rectangle (8,8);
    \draw[densely dashed](0,2) circle(4);
    \end{scope}
    
    \begin{scope}
        \clip (-5,0.2) rectangle (-2.4, 2);
        \draw[densely dashed] (-3,2) circle (1);
    \end{scope}
    
    \begin{scope}
        \clip (5,0.2) rectangle (2.4, 2);
        \draw[densely dashed] (3,2) circle (1);
    \end{scope}

    \begin{scope}
    \clip (-8,1.2) rectangle (8,8);
    \draw[densely dashed] (0,-2) circle(4);
    \end{scope}

    \draw[>=stealth, <->] (0,5) -- (0, 6);
    \node[fill=white, scale=0.75] at (0,5.5){$\epsilon D$};

    \node at (-1.2,3.7) {$\Theta_0$};
    \node at (-3.2,1.4) {$\Theta_1$};

    \filldraw[black] (2.8,4) circle (1.5pt);
    \node[scale=0.75, above] at (2.8,4) {$\theta^{(1)}$};

    \filldraw[black] (1,3.5) circle (1.5pt);
    \node[scale=0.75, left] at (0.95,3.5) {$\theta_0$};

    \filldraw[black] (2,7) circle (1.5pt);
    \node[scale=0.75, left] at (1.95,7.07) {$\hat{\theta}(x)$};

    \draw[dashed] (2,7) -- (1, 3.5);
    \draw[dashed] (1, 3.5) -- (2.8,4);
\end{tikzpicture}
\caption{\label{fig:projection} Illustration of the construction of $\hat{\theta}^{(1)}$ in the proof of Theorem~\ref{lemma:expectation-lb-to-high-prob-lb}.  If $x\in S(\hat{\theta})$, then we set $\hat{\theta}^{(1)}(x) \coloneqq \hat{\theta}(x)$; otherwise we set $\hat{\theta}^{(1)}(x) \coloneqq \theta^{(1)} \in \Theta_1$.}
\end{figure}
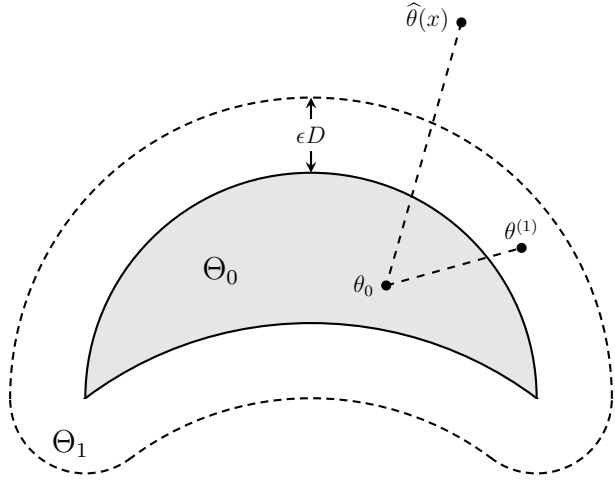


\begin{proof}[Proof of Theorem~\ref{lemma:expectation-lb-to-high-prob-lb}]
    For $\theta\in\Theta$ and $r\geq 0$, let $B(\theta, r) \coloneqq \bigl\{ \theta'\in\Theta : d(\theta,\theta') < r \bigr\}$ be the open ball in $\Theta$ centred at $\theta$ with radius $r$.  Fix $\epsilon>0$ and define $\Theta_1 \coloneqq \bigcup_{\theta_0 \in \Theta_0} B(\theta_0, \epsilon  D)$, so that $d(\theta_0,\theta_1) < (1+\epsilon )D$ for all $\theta_0\in\Theta_0$ and $\theta_1 \in \Theta_1$.  Let $\hat{\Theta}_1$ denote the set of all measurable functions from $\mathcal{X}$ to $\Theta_1$.  For any $\tilde{\theta} \in \hat{\Theta}_1$, we have
    \begin{align*}
        \Delta &\leq \sup_{\theta_0 \in \Theta_0} \sup_{P_{\theta_0} \in \mathcal{P}_{\theta_0}} \mathbb{E}_{P_{\theta_0}} L( \tilde{\theta}, \theta_0)\\
        &=  \sup_{\theta_0 \in \Theta_0} \sup_{P_{\theta_0} \in \mathcal{P}_{\theta_0}} \Bigl\{ \mathbb{E}_{P_{\theta_0}} \bigl\{ L( \tilde{\theta}, \theta_0) \mathbbm{1}_{\{L( \tilde{\theta}, \theta_0) \leq g(\epsilon  D) \}} \bigr\} + \mathbb{E}_{P_{\theta_0}} \bigl\{ L( \tilde{\theta}, \theta_0) \mathbbm{1}_{\{L( \tilde{\theta}, \theta_0) > g(\epsilon  D) \}} \bigr\} \Bigr\}\\
        &\leq g(\epsilon  D) + g\bigl((1+\epsilon )D\bigr)  \sup_{\theta_0 \in \Theta_0} \sup_{P_{\theta_0} \in \mathcal{P}_{\theta_0}} P_{\theta_0} \bigl(L(\tilde{\theta}, \theta_0) > g(\epsilon  D) \bigr).
    \end{align*}
    Hence,
    \begin{align}
        \inf_{\tilde{\theta} \in \hat{\Theta}_1} \sup_{\theta_0 \in \Theta_0} \sup_{P_{\theta_0} \in \mathcal{P}_{\theta_0}} P_{\theta_0} \bigl(L(\tilde{\theta}, \theta_0) > g(\epsilon  D) \bigr) \geq \frac{\Delta - g(\epsilon  D)}{g\bigl((1+\epsilon )D\bigr)}. \label{eq:reduction-lemma-lb1}
    \end{align}
    Now for any $\hat{\theta} \in \hat{\Theta}$, let
    \begin{align*}
        S(\hat{\theta}) \coloneqq \bigl\{ x\in\mathcal{X} : \hat{\theta}(x) \in \Theta_1 \bigr\} \in \mathcal{A}.
    \end{align*}
    Fix $\theta^{(1)} \in \Theta_1$ and $\hat{\theta} \in \hat{\Theta}$, and define a new estimator $\hat{\theta}^{(1)} \in \hat{\Theta}_1$ by
    \begin{align*}
        \hat{\theta}^{(1)}(x) \coloneqq \begin{cases}
            \hat{\theta}(x) \quad&\text{if } x\in S(\hat{\theta})\\
            \theta^{(1)} \quad&\text{if } x\notin S(\hat{\theta});
        \end{cases}
    \end{align*}
    see Figure~\ref{fig:projection}.  Then for any $\xi>0$, $\theta_0\in\Theta_0$ and $P_{\theta_0} \in \mathcal{P}_{\theta_0}$, we have
    \begin{align*}
        P_{\theta_0} \bigl(L(\hat{\theta},\theta_0) > g(\epsilon  D) - \xi \bigr) &\overset{(i)}{=} P_{\theta_0}\bigl(S(\hat{\theta})^{\mathrm{c}}\bigr) + P_{\theta_0} \bigl( S(\hat{\theta})  \cap \bigl\{L(\hat{\theta},\theta_0) > g(\epsilon  D) - \xi \bigr\}\bigr) \\
        &= P_{\theta_0}\bigl(S(\hat{\theta})^{\mathrm{c}}\bigr) + P_{\theta_0} \bigl(S(\hat{\theta}) \cap \bigl\{L(\hat{\theta}^{(1)},\theta_0) > g(\epsilon  D) - \xi\bigr\} \bigr) \\
        &\geq P_{\theta_0} \bigl( L(\hat{\theta}^{(1)},\theta_0) > g(\epsilon  D) - \xi \bigr),
    \end{align*}
    where $(i)$ follows since $g\bigl(d(\hat{\theta}(x), \theta_0)\bigr) \geq g(\epsilon  D) > g(\epsilon  D) - \xi$ for all $x\in S(\hat{\theta})^{\mathrm{c}}$.  Therefore, 
    \begin{align*}
        \inf_{\hat{\theta} \in \hat{\Theta}} \sup_{\theta \in \Theta} &\sup_{P_{\theta} \in \mathcal{P}_{\theta}}  P_{\theta} \bigl(L(\hat{\theta}, \theta) > g(\epsilon  D) - \xi \bigr)\\
        &\geq \inf_{\hat{\theta} \in \hat{\Theta}} \sup_{\theta_0 \in \Theta_0} \sup_{P_{\theta_0} \in \mathcal{P}_{\theta_0}}  P_{\theta_0} \bigl(L(\hat{\theta}^{(1)}, \theta_0) > g(\epsilon  D) - \xi \bigr)\\
        &\geq \inf_{\tilde{\theta} \in \hat{\Theta}_1} \sup_{\theta_0 \in \Theta_0} \sup_{P_{\theta_0} \in \mathcal{P}_{\theta_0}} P_{\theta_0} \bigl(L(\tilde{\theta}, \theta_0) > g(\epsilon  D) - \xi \bigr) \geq \frac{\Delta - g(\epsilon  D)}{g\bigl((1+\epsilon )D\bigr)},
    \end{align*}
    where the final inequality follows from~\eqref{eq:reduction-lemma-lb1}.  We conclude that $\mathcal{M}_{-}(\delta) \geq g(\epsilon  D) - \xi$ for all $\delta \in \bigl(0, \frac{\Delta - g(\epsilon  D)}{g((1+\epsilon )D)} \bigr)$, and since $\xi>0$ was arbitrary, the result follows.
\end{proof}

We now turn to the proof of our boosting result, Proposition~\ref{cor:boost-delta}, which requires two preliminary lemmas.  Let $(\mathcal{Y},\mathcal{B})$ be a measurable space, and suppose that $\Theta$ and $\mathcal{Z}$ are topological spaces.  A function $f:\mathcal{Y} \times \Theta \rightarrow \mathcal{Z}$ is a \emph{Carath\'{e}odory function} if:
\begin{enumerate}[(i)]
\item[\emph{(i)}] the function $f(\cdot,\theta):\mathcal{Y} \rightarrow \mathcal{Z}$ is measurable (when $\mathcal{Z}$ is equipped with its Borel $\sigma$-algebra) for each $\theta \in \Theta$;
\item[\emph{(ii)}] the function $f(y,\cdot):\Theta \rightarrow \mathcal{Z}$ is continuous for each $y \in \mathcal{Y}$.
\end{enumerate}

\begin{lemma}\label{lemma:boost-delta}
    For $m\in\mathbb{N}$, let $\mathcal{P}_{\theta}^{\otimes m} \coloneqq \{ P_{\theta}^{\otimes m} \in \mathcal{Q}(\mathcal{X}^m) : P_{\theta} \in \mathcal{P}_{\theta} \}$ and $\mathcal{P}_{\Theta}^{\otimes m} \coloneqq \{\mathcal{P}_{\theta}^{\otimes m} : \theta\in\Theta\}$. Suppose that $\Theta$ is a non-empty Polish space, that $L:\Theta \times \Theta \rightarrow [0,\infty)$ is a Carath\'eodory function, and there exists $A>0$ such that $L(\theta_1,\theta_2) \leq A\bigl\{ L(\theta_1,\theta_3) + L(\theta_2,\theta_3) \bigr\}$ for all $\theta_1,\theta_2,\theta_3\in\Theta$.
    Then, for any $\delta\in(0,1]$, we have
    \begin{align*}
         \mathcal{M}_-(\delta, \mathcal{P}_{\Theta}, L) \geq \frac{1}{2A}\mathcal{M}_-\bigl(\delta^3+3\delta^2(1-\delta), \mathcal{P}_{\Theta}^{\otimes 3}, L\bigr).
    \end{align*}
\end{lemma}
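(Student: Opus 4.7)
The plan is to establish the equivalent assertion that
\[
\mathcal{M}_-\bigl(\delta^3+3\delta^2(1-\delta), \mathcal{P}_\Theta^{\otimes 3}, L\bigr) \leq 2A \cdot \mathcal{M}_-(\delta, \mathcal{P}_\Theta, L),
\]
which is vacuous if the right-hand side is infinite.  Otherwise, I would fix $r > \mathcal{M}_-(\delta,\mathcal{P}_\Theta,L)$ together with an auxiliary $\tau>0$ so that the definition of $\mathcal{M}_-$ supplies a one-sample estimator $\hat{\theta}\in\hat{\Theta}$ satisfying
\[
\sup_{\theta\in\Theta}\,\sup_{P_\theta\in\mathcal{P}_\theta}\,P_\theta\bigl\{L\bigl(\hat{\theta}(X),\theta\bigr)>r\bigr\}\leq\delta+\tau.
\]
Given three independent samples $X_1, X_2, X_3$, I would write $\hat{\theta}_i \coloneqq \hat{\theta}(X_i)$, and for a further parameter $\xi > 0$ define a boosted three-sample estimator $\tilde{\theta} : \mathcal{X}^3 \to \Theta$ as a measurable $\xi$-approximate minimizer (over $\theta' \in \Theta$) of
\[
\rho(\theta'; x_1, x_2, x_3) \coloneqq \text{second-smallest element of }\bigl\{L\bigl(\hat{\theta}(x_i), \theta'\bigr) : i \in \{1, 2, 3\}\bigr\}.
\]

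For the probabilistic analysis, let $G$ denote the event that at least two of $L(\hat{\theta}_i,\theta)$ are bounded by $r$.  Since the three failure events are independent Bernoulli trials with parameter at most $\delta+\tau$ and $p\mapsto 3p^2-2p^3$ is increasing on $[0,1]$,
\[
P_\theta^{\otimes 3}(G^{\mathrm c}) \leq 3(\delta+\tau)^2(1-\delta-\tau) + (\delta+\tau)^3.
\]
On $G$, I may assume $L(\hat{\theta}_1,\theta), L(\hat{\theta}_2,\theta)\leq r$, so that $\rho(\theta;X)\leq r$, whence the approximate-minimizing property of $\tilde{\theta}$ forces $\rho(\tilde{\theta};X)\leq r+\xi$.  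Consequently, at least two indices $j, k \in \{1, 2, 3\}$ satisfy $L(\hat{\theta}_j, \tilde{\theta}) \vee L(\hat{\theta}_k, \tilde{\theta}) \leq r + \xi$; since two size-two subsets of $\{1, 2, 3\}$ must intersect, one such index---say $j$---additionally has $L(\hat{\theta}_j,\theta) \leq r$.  Invoking the quasi-triangle inequality (together with the symmetry of $L$, which holds throughout the paper where $L = g\circ d$) yields
\[
L(\tilde{\theta}, \theta) \leq A\bigl\{L(\tilde{\theta}, \hat{\theta}_j) + L(\theta, \hat{\theta}_j)\bigr\} \leq A(2r + \xi).
\]
Letting $\tau, \xi \downarrow 0$ and then $r \downarrow \mathcal{M}_-(\delta, \mathcal{P}_\Theta, L)$ completes the argument.

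The principal technical obstacle is selecting $\tilde{\theta}$ measurably on $\mathcal{X}^3$.  Since $\Theta$ is Polish, the Carath\'eodory property of $L$ descends to $\rho$: continuity of $\rho(\cdot\, ; x)$ for fixed $x$ follows from continuity of each $L(\hat{\theta}(x_i), \cdot)$ and of the second-order statistic on $\mathbb{R}^3$, while measurability of $\rho(\theta'; \cdot)$ for fixed $\theta'$ follows because $\hat{\theta}$ and $L(\cdot, \theta')$ are each measurable.  Passing to a countable dense subset of $\Theta$ (using continuity to justify the reduction) then allows a measurable $\xi$-approximate minimizer to be selected by hand, or one may invoke Aliprantis--Border Theorem~18.19 (already used in Section~\ref{sec:SCO}) as an off-the-shelf measurable selection result.
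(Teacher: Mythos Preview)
Your proposal is correct and follows essentially the same route as the paper: aggregate three independent copies of a near-optimal one-sample estimator via a majority/median construction, apply pigeonhole together with the quasi-triangle inequality on the good event, and handle measurability via a selection argument on the Polish space $\Theta$. The only cosmetic difference is that you frame the aggregated estimator as a $\xi$-approximate minimizer of the second-order statistic $\rho$, whereas the paper selects from (the closure of) the sublevel-set correspondence $\{\theta':\text{at least two of }L(\tilde\theta(x_\ell),\theta')<r\}$; the two constructions are equivalent in content, and your countable-dense-subset route to measurability is the appropriate one (Aliprantis--Border Theorem~18.19 itself requires compact-valued correspondences, so it does not apply directly to approximate minimizers).
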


\begin{proof}
    Fix $\epsilon>0$, there exists a measurable function $\tilde{\theta} : \mathcal{X} \to \Theta$ such that for all $\theta\in\Theta$,
    \begin{align*}
        P_{\theta}\bigl\{ L(\tilde{\theta}, \theta) < \mathcal{M}_-(\delta,\mathcal{P}_{\Theta}, L) + \epsilon \bigr\} \geq 1-\delta.
    \end{align*}
    Now fix $\theta\in\Theta$, and define the event
    \begin{align*}
    E \coloneqq \biggl\{ (x_1,x_2,x_3) \in\mathcal{X}^3 : \sum_{\ell=1}^3 \mathbbm{1}_{\{L(\tilde{\theta}(x_{\ell}), \theta) < \mathcal{M}_-(\delta,\mathcal{P}_{\Theta}, L) + \epsilon\}} \geq 2 \biggr\}
    \end{align*}
    so that $P_{\theta}^{\otimes 3}(E) \geq 1 - \delta^3 - 3\delta^2 ( 1- \delta)$.  Further, define $\psi : \mathcal{X}^3 \rightarrow \mathrm{Pow}(\Theta)$ by
    \begin{align*}
        \psi(x_1,x_2,x_3) \coloneqq \biggl\{ \theta'\in\Theta : \sum_{\ell=1}^3 \mathbbm{1}_{\{L(\tilde{\theta}(x_{\ell}), \theta') < \mathcal{M}_-(\delta,\mathcal{P}_{\Theta}, L) + \epsilon\}} \geq 2 \biggr\}.
    \end{align*}
    For $(x_1,x_2,x_3)\in\mathcal{X}^3$, let $\bar{\psi}(x_1,x_2,x_3)$ denote the closure of $\psi(x_1,x_2,x_3)$ in $\Theta$, and define $S : \mathcal{X}^3 \rightarrow \mathrm{Pow}(\Theta)$ by
    \begin{align*}
        S(x_1,x_2,x_3) \coloneqq \begin{cases}
            \bar{\psi}(x_1,x_2,x_3) \quad&\text{when } \psi(x_1,x_2,x_3)\neq \emptyset\\
            \Theta &\text{otherwise}.
        \end{cases}
    \end{align*}
    By Lemma~\ref{lemma:measurable-selector}, there exists a measurable function $\hat{\theta} : \mathcal{X}^3 \to \Theta$ such that $\hat{\theta}(x_1,x_2,x_3) \in S(x_1,x_2,x_3)$ for all $(x_1,x_2,x_3) \in \mathcal{X}^3$.
    For any $(x_1,x_2,x_3) \in E$, we have $\theta\in \psi(x_1,x_2,x_3)$, so $\psi(x_1,x_2,x_3)$ is non-empty; for such $(x_1,x_2,x_3)$, it follows that  $\hat{\theta}(x_1,x_2,x_3) \in \bar{\psi}(x_1,x_2,x_3)$, so 
    \[
    \sum_{\ell=1}^3 \mathbbm{1}_{\{L(\tilde{\theta}(x_{\ell}), \hat{\theta}(x_1,x_2,x_3)) \leq \mathcal{M}_-(\delta,\mathcal{P}_{\Theta}, L) + \epsilon\}} \geq 2.
    \]
    Therefore, for all $(x_1,x_2,x_3) \in E$, there exists $k\in \{1,2,3\}$ such that both $L\bigl(\tilde{\theta}(x_k), \theta \bigr) < \mathcal{M}_-(\delta,\mathcal{P}_{\Theta}, L) + \epsilon$ and $L\bigl(\tilde{\theta}(x_k), \hat{\theta}(x_1,x_2,x_3)\bigr) \leq \mathcal{M}_-(\delta,\mathcal{P}_{\Theta}, L) + \epsilon$.  Thus, for all $(x_1,x_2,x_3) \in E$,
    \begin{align*}
        L\bigl(\hat{\theta}(x_1,x_2,x_3),\theta\bigr) \leq A\bigl\{ L(\tilde{\theta}(x_k), \theta) + L\bigl(\tilde{\theta}(x_k), \hat{\theta}(x_1,x_2,x_3)\bigr) \bigr\} \leq 2A \mathcal{M}_-(\delta,\mathcal{P}_{\Theta}, L) + 2A\epsilon,
    \end{align*}
    and we deduce that
    \begin{align*}
        P_{\theta}^{\otimes 3} \bigl\{ L(\hat{\theta},\theta) \leq 2A \mathcal{M}_-(\delta,\mathcal{P}_{\Theta}, L) + 2A\epsilon \bigr\} \geq P_{\theta}^{\otimes 3}(E) \geq 1 - \delta^3 - 3\delta^2(1-\delta).
    \end{align*}
    This holds for all $\theta\in\Theta$, so
    \begin{align*}
        \mathcal{M}_-\bigl(\delta^3+3\delta^2(1-\delta), \mathcal{P}_{\Theta}^{\otimes 3}, L\bigr) \leq 2A \mathcal{M}_-(\delta,\mathcal{P}_{\Theta}, L) + 2A\epsilon,
    \end{align*}
    and the result follows since $\epsilon > 0$ was arbitrary.
\end{proof}
The following technical lemma on the existence of a measurable selector is based on the theory of correspondences, so we now present some basic definitions, following \citet{aliprantis2007infinite}.  A \emph{correspondence} $\varphi$ from a set $\mathcal{Y}$ to a set $\mathcal{Z}$, written $\varphi: \mathcal{Y} \twoheadrightarrow \mathcal{Z}$, assigns to each $y \in \mathcal{Y}$ a subset $\varphi(y)$ of $\mathcal{Z}$; in other words, it is a function from $\mathcal{Y}$ to $\mathrm{Pow}(\mathcal{Z})$.  Its \emph{lower inverse} $\varphi^{\mathrm{L}}:\mathrm{Pow}(\mathcal{Z}) \rightarrow \mathrm{Pow}(\mathcal{Y})$ is defined by
\[
\varphi^{\mathrm{L}}(B) \coloneqq \{y \in \mathcal{Y}: \varphi(y) \cap B \neq \emptyset\}.
\]
When $\mathcal{Z}$ is a topological space, the \emph{closure correspondence} $\bar{\varphi}: \mathcal{Y} \twoheadrightarrow \mathcal{Z}$ of $\varphi$ assigns to each $y \in \mathcal{Y}$ the subset of $\mathcal{Z}$ given by the closure of $\varphi(y)$.  If, in addition, $(\mathcal{Y},\mathcal{B})$ is a measurable space, we say that the correspondence $\varphi: \mathcal{Y} \twoheadrightarrow \mathcal{Z}$ is \emph{measurable} if $\varphi^{\mathrm{L}}(F) \in \mathcal{B}$ for every closed subset $F$ of $\mathcal{Z}$, and \emph{weakly measurable} if $\varphi^{\mathrm{L}}(G) \in \mathcal{B}$ for every open subset $G$ of $\mathcal{Z}$. 
\begin{lemma} \label{lemma:measurable-selector}
    Let $(\mathcal{X},\mathcal{A})$ be a measurable space, let $\Theta$ be a non-empty Polish space and suppose that $L:\Theta \times \Theta \rightarrow [0,\infty)$ is a Carath\'{e}odory function.  Let $\tilde{\theta}: \mathcal{X} \to \Theta$ be measurable, let $r \in (0,\infty)$ and define a correspondence $\psi: \mathcal{X}^3 \twoheadrightarrow \Theta$ by
    \begin{align*}
        \psi(x_1,x_2,x_3) \coloneqq \biggl\{ \theta'\in\Theta : \sum_{\ell=1}^3 \mathbbm{1}_{\{L(\tilde{\theta}(x_{\ell}), \theta') < r\}} \geq 2 \biggr\}.
    \end{align*}
    Further, define a non-empty correspondence $S : \mathcal{X}^3 \twoheadrightarrow \Theta$ by
    \begin{align*}
        S(x_1,x_2,x_3) \coloneqq \begin{cases}
            \bar{\psi}(x_1,x_2,x_3) \quad&\text{when } \psi(x_1,x_2,x_3)\neq \emptyset\\
            \Theta &\text{otherwise}.
        \end{cases}
    \end{align*}
    Then there exists a measurable function $\hat{\theta} : \mathcal{X}^3 \to \Theta$ such that $\hat{\theta}(x_1,x_2,x_3) \in S(x_1,x_2,x_3)$ for all $(x_1,x_2,x_3) \in \mathcal{X}^3$.
\end{lemma}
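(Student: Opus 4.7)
My plan is to apply the Kuratowski--Ryll-Nardzewski measurable selection theorem~\citep[][Theorem~18.13]{aliprantis2007infinite}, which guarantees the existence of a measurable selector for any weakly measurable correspondence from a measurable space into a Polish space whose values are non-empty and closed. The values of $S$ are non-empty by construction and closed in $\Theta$ (either as the closure $\bar\psi(x_1,x_2,x_3)$ of $\psi(x_1,x_2,x_3)$ or as all of $\Theta$), so the heart of the argument is to verify weak measurability of $S$: for every open $G \subseteq \Theta$, the set $S^{\mathrm{L}}(G) \coloneqq \bigl\{(x_1,x_2,x_3) \in \mathcal{X}^3 : S(x_1,x_2,x_3) \cap G \neq \emptyset\bigr\}$ must lie in the product $\sigma$-algebra $\mathcal{A}^{\otimes 3}$.

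The first step is a reduction. For non-empty open $G$, openness yields $\bar\psi(x_1,x_2,x_3) \cap G \neq \emptyset$ if and only if $\psi(x_1,x_2,x_3) \cap G \neq \emptyset$ whenever $\psi(x_1,x_2,x_3)$ is itself non-empty; this is because every point of $\bar\psi \cap G$ has a neighbourhood inside $G$ that must intersect $\psi$. Consequently,
\[
S^{\mathrm{L}}(G) = \bigl(\mathcal{X}^3 \setminus \psi^{\mathrm{L}}(\Theta)\bigr) \cup \psi^{\mathrm{L}}(G),
\]
so it suffices to show $\psi^{\mathrm{L}}(G) \in \mathcal{A}^{\otimes 3}$ for every open $G \subseteq \Theta$, with the choice $G = \Theta$ simultaneously handling the first term.

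Since $\tilde{\theta}$ is measurable and $L$ is Carath\'eodory, for each $\ell \in \{1,2,3\}$ the map $\theta' \mapsto L\bigl(\tilde{\theta}(x_\ell),\theta'\bigr)$ is continuous for each fixed $(x_1,x_2,x_3)$, while $(x_1,x_2,x_3) \mapsto L\bigl(\tilde{\theta}(x_\ell),\theta'\bigr)$ is measurable for each fixed $\theta'$. Thus the sublevel set $A_\ell(x_1,x_2,x_3) \coloneqq \bigl\{\theta' \in \Theta : L(\tilde{\theta}(x_\ell),\theta') < r\bigr\}$ is open, and
\[
\psi^{\mathrm{L}}(G) = \bigcup_{1 \leq i < j \leq 3} \Bigl\{(x_1,x_2,x_3) : \exists\, \theta' \in G \text{ with } L\bigl(\tilde{\theta}(x_i),\theta'\bigr) \vee L\bigl(\tilde{\theta}(x_j),\theta'\bigr) < r\Bigr\}.
\]
Fixing a countable dense subset $D \subseteq \Theta$, the set $D \cap G$ is dense in $G$ (as $G$ is open). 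By continuity of $\theta' \mapsto L\bigl(\tilde{\theta}(x_i),\theta'\bigr) \vee L\bigl(\tilde{\theta}(x_j),\theta'\bigr)$, the existence of $\theta' \in G$ making this maximum smaller than $r$ is equivalent to the existence of $\theta' \in D \cap G$ with the same property. Each bracketed set is therefore the countable union, indexed by $\theta' \in D \cap G$, of the measurable sets $\bigl\{(x_1,x_2,x_3) : L(\tilde{\theta}(x_i),\theta') < r,\; L(\tilde{\theta}(x_j),\theta') < r\bigr\}$, which establishes the desired measurability.

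The main obstacle is the correct bookkeeping around the dichotomy in the definition of $S$ (between $\psi$ being empty and non-empty); once the reduction to measurability of $\psi^{\mathrm{L}}(G)$ is in place, the Polish/Carath\'eodory structure combined with the countable dense approximation makes the remaining verification routine. Applying the Kuratowski--Ryll-Nardzewski theorem then yields the required measurable selector $\hat{\theta}$.
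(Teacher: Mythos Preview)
Your proof is correct and follows essentially the same route as the paper: verify that $S$ is a weakly measurable correspondence with non-empty closed values into a Polish space, and apply the Kuratowski--Ryll-Nardzewski selection theorem \citep[][Theorem~18.13]{aliprantis2007infinite}. The only difference is that the paper establishes weak measurability of $\psi$ by packaging the three losses into a Carath\'eodory map $\mathcal{X}^3 \times \Theta \to [0,\infty)^3$, writing $\psi$ as the preimage of an open set, and citing \citet[][Lemmas~18.2, 18.3 and~18.7]{aliprantis2007infinite} as black boxes, whereas you give the more elementary, self-contained verification via a countable dense subset of $\Theta$.
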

\begin{proof}
    The map $(x_1,x_2,x_3,\theta') \mapsto \bigl(L\bigl(\tilde{\theta}(x_1), \theta'\bigr),L\bigl(\tilde{\theta}(x_2), \theta'\bigr),L\bigl(\tilde{\theta}(x_3), \theta'\bigr)\bigr)$ is a Carath\'{e}odory function from $\mathcal{X}^3 \times \Theta$ to $[0,\infty)^3$. Define $G\coloneqq \bigl\{(v_1,v_2,v_3)^\top\in[0,\infty)^3 : \sum_{\ell=1}^3 \mathbbm{1}_{\{v_{\ell} < r\}} \geq 2\bigr\}$, which is an open subset of $[0,\infty)^3$. Then, by \citet[Lemma~18.7]{aliprantis2007infinite}, the correspondence $\psi$, which can be equivalently expressed as $\psi(x_1,x_2,x_3) = \bigl\{ \theta'\in\Theta : \bigl(L(\tilde{\theta}(x_{\ell}), \theta')\bigr)_{\ell=1}^3 \in G \bigr\}$, is measurable, so by \citet[Lemma~18.2]{aliprantis2007infinite} it is also weakly measurable.  It follows that the lower inverse $\psi^{\mathrm{L}}: \mathrm{Pow}(\Theta) \rightarrow \mathrm{Pow}(\mathcal{X}^3)$ satisfies $\psi^{\mathrm{L}}(\Theta) = \{(x_1,x_2,x_3) \in \mathcal{X}^3 : \psi(x_1,x_2,x_3) \neq \emptyset\} \in \mathcal{A}^{\otimes 3}$, so $U \coloneqq  \psi^{\mathrm{L}}(\Theta)^{\mathrm{c}} \in \mathcal{A}^{\otimes 3}$.
    Now define a non-empty correspondence $\varphi:\mathcal{X}^3 \twoheadrightarrow \Theta$ by
    \begin{align*}
        \varphi(x_1,x_2,x_3) \coloneqq \begin{cases}
            \psi(x_1,x_2,x_3) \quad&\text{if } (x_1,x_2,x_3) \in U^{\mathrm{c}} \\
            \Theta \quad&\text{if } (x_1,x_2,x_3) \in U.
        \end{cases}
    \end{align*}
    Then, for any non-empty open set $V\subseteq \Theta$, we have $\psi^{\mathrm{L}}(V) \in \mathcal{A}^{\otimes 3}$, so \begin{align*}
        \varphi^{\mathrm{L}}(V) &= \bigl\{ (x_1,x_2,x_3) \in \mathcal{X}^3 : \varphi(x_1,x_2,x_3) \cap V \neq \emptyset \bigr\}\\
        &= \bigl\{ (x_1,x_2,x_3) \in \mathcal{X}^3 : \psi(x_1,x_2,x_3) \cap V \neq \emptyset \bigr\} \cup U = \psi^{\mathrm{L}}(V) \cup U \in \mathcal{A}^{\otimes 3}.
    \end{align*}
    Since we also have $\varphi^{\mathrm{L}}(\emptyset) = \emptyset \in \mathcal{A}^{\otimes 3}$, we deduce that $\varphi$ is weakly measurable, and by \citet[Lemma~18.3]{aliprantis2007infinite}, its closure correspondence $\bar{\varphi}$ is also weakly measurable. Therefore, by \citet[Theorem~18.13]{aliprantis2007infinite}, $\bar{\varphi}$ admits a measurable selector $\hat{\theta}:\mathcal{X}^3 \to \Theta$; in other words, $\hat{\theta}$ is measurable, with $\hat{\theta}(x_1,x_2,x_3) \in \bar{\varphi}(x_1,x_2,x_3)$ for all $(x_1,x_2,x_3) \in \mathcal{X}^3$.  Since $\bar{\varphi} = S$, the conclusion follows.
\end{proof}
We are now in a position to complete the proof of Proposition~\ref{cor:boost-delta}.
\begin{proof}[Proof of Proposition~\ref{cor:boost-delta}]
    The sequence $x, h(x), h^{\circ 2}(x), h^{\circ 3}(x),\ldots$ is strictly decreasing and converges to zero for $x\in(0,1/2)$.  It follows that there exists $k \equiv k(\delta_-,\delta_+) \in\mathbb{N}$ such that $h^{\circ k}(\delta_+) \leq \delta_-$. Moreover, since $h^{\circ k}(\cdot)$ is increasing on $(0,1/2)$, we have $h^{\circ k}(\delta) \leq h^{\circ k}(\delta_+) \leq \delta_-$ for all $\delta\in(0,\delta_+]$.  Since $d(\cdot,\theta)$ and $d(\theta,\cdot)$ are continuous for all $\theta\in\Theta$, and since $g$ is continuous, we have that $L$ is a Carath\'{e}odory function.  Hence, by iteratively applying Lemma~\ref{lemma:boost-delta} $k$ times, we deduce that
    \begin{align*}
        \mathcal{M}_{-}(\delta,\mathcal{P}_{\Theta},L) \geq \frac{1}{(2A)^k} \cdot \mathcal{M}_{-}\bigl(h^{\circ k}(\delta), \mathcal{P}_{\Theta}^{\otimes 3^k},L\bigr) \geq \frac{1}{(2A)^k} \cdot \mathcal{M}_{-}\bigl(\delta_-, \mathcal{P}_{\Theta}^{\otimes 3^k},L\bigr),
    \end{align*}
as required.
\end{proof}

\noindent \textbf{Acknowledgements:} The research of all three authors was supported by the third author's European Research Council Advanced Grant 101019498.  We are very grateful to Thomas Berrett, Saminul Haque and Alexandre Tsybakov for helpful feedback on an earlier draft.

{
\bibliographystyle{custom}
\bibliography{bibliography}
}

\appendix

\section{Minimax risk lower bounds}

In this section, we state some general techniques for minimax risk lower bounds from \cite{samworth2024statistics}; similar statements can be found in the books by \citet{tsybakov2009introduction} and \citet{wainwright2019high}.  We work in the setting of Section~\ref{sec:setup}. 

\begin{lemma}[Le Cam's two-point lemma] \label{lemma:le-cam}
    Let $\theta_1,\theta_2 \in \Theta$, $P_1 \in \mathcal{P}_{\theta_1}$ and $P_2 \in \mathcal{P}_{\theta_2}$. Then, writing $\eta \coloneqq \frac{1}{2} d(\theta_1,\theta_2)$ and $\Theta_0 \coloneqq \{\theta_1,\theta_2\}$, we have
    \begin{align*}
        \inf_{\hat{\theta} \in \hat{\Theta}} \sup_{\theta \in \Theta} \sup_{P_{\theta} \in \mathcal{P}_{\theta}} \mathbb{E}_{P_{\theta}} L(\hat{\theta},\theta) \geq \inf_{\hat{\theta} \in \hat{\Theta}} \max_{\theta_0 \in \Theta_0} \sup_{P_{\theta_0} \in \mathcal{P}_{\theta_0}} \mathbb{E}_{P_{\theta_0}} L(\hat{\theta},\theta_0) \geq \frac{g(\eta)}{2} \bigl\{ 1 - \mathrm{TV}(P_1,P_2) \bigr\}.
    \end{align*}
\end{lemma}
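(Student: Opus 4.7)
The first inequality is essentially tautological, so I would dispense with it in one line: since $\Theta_0 \subseteq \Theta$ and, for $\ell \in \{1,2\}$, $P_\ell \in \mathcal{P}_{\theta_\ell}$, the outer suprema over $\theta$ and over $P_\theta \in \mathcal{P}_\theta$ each dominate their counterparts restricted to $\Theta_0$ and to $\{P_1, P_2\}$ respectively, uniformly in $\hat{\theta}$.

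For the substantive second inequality, my plan is the classical reduction from estimation to testing. Fix $\hat{\theta} \in \hat{\Theta}$, bound the maximum of two non-negative quantities below by their average, and write
\begin{align*}
\max_{\ell \in \{1,2\}} \mathbb{E}_{P_\ell} L(\hat{\theta}, \theta_\ell) \geq \tfrac{1}{2}\bigl\{ \mathbb{E}_{P_1} L(\hat{\theta}, \theta_1) + \mathbb{E}_{P_2} L(\hat{\theta}, \theta_2) \bigr\}.
\end{align*}
Next I would define the measurable set $A \coloneqq \{x \in \mathcal{X}: d(\hat{\theta}(x), \theta_1) \geq \eta\}$, which is measurable since $d(\cdot,\theta_1)$ is continuous and $\hat{\theta}$ is measurable. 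On $A^c$, the triangle inequality yields $d(\hat{\theta}(x),\theta_2) \geq d(\theta_1,\theta_2) - d(\hat{\theta}(x),\theta_1) > \eta$, and then monotonicity of $g$ gives $L(\hat{\theta}(x),\theta_1) \geq g(\eta)$ on $A$ and $L(\hat{\theta}(x),\theta_2) \geq g(\eta)$ on $A^c$.

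Integrating these pointwise bounds against $P_1$ and $P_2$ respectively, the display in the previous paragraph is at least $\frac{g(\eta)}{2}\{P_1(A) + P_2(A^c)\} = \frac{g(\eta)}{2}\{1 + P_1(A) - P_2(A)\}$. Invoking the variational characterisation $|P_1(A) - P_2(A)| \leq \mathrm{TV}(P_1,P_2)$ closes out the bound at $\frac{g(\eta)}{2}\{1 - \mathrm{TV}(P_1,P_2)\}$. Taking the infimum over $\hat{\theta} \in \hat{\Theta}$ on the left, and noting that the right-hand side is independent of $\hat{\theta}$, delivers the claim.

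There is no real obstacle here; the only subtlety worth flagging in the write-up is the measurability of $A$, which relies on the continuity of $d(\cdot,\theta_1)$ with respect to the topology on $\Theta$ together with measurability of $\hat{\theta}$. Everything else is a direct unpacking of definitions, so the proof should be short.
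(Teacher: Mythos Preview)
Your proof is correct and is precisely the standard reduction-to-testing argument for Le Cam's two-point lemma. Note, however, that the paper does not actually supply a proof of this statement: it is listed in the appendix as a background result, with a citation to \citet{samworth2024statistics} (and the books of \citet{tsybakov2009introduction} and \citet{wainwright2019high}) in lieu of proof. So there is no ``paper's own proof'' to compare against; your argument is exactly what one would find in those references, and nothing is missing.
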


\begin{lemma}[Fano's lemma] \label{lemma:fano}
    Let $M\geq 2$, $\theta_1,\ldots,\theta_M \in \Theta$, and $P_j \in \mathcal{P}_{\theta_j}$ for $j\in[M]$. Then, writing $\eta \coloneqq \frac{1}{2} \min_{1\leq j < k \leq M} d(\theta_j,\theta_k)$ and $\Theta_0 \coloneqq \{\theta_j : j\in[M]\}$, we have
    \begin{align*}
        \inf_{\hat{\theta} \in \hat{\Theta}} \sup_{\theta \in \Theta} \sup_{P_{\theta} \in \mathcal{P}_{\theta}} \mathbb{E}_{P_{\theta}} L(\hat{\theta},\theta) &\geq \inf_{\hat{\theta} \in \hat{\Theta}} \max_{\theta_0 \in \Theta_0} \sup_{P_{\theta_0} \in \mathcal{P}_{\theta_0}} \mathbb{E}_{P_{\theta_0}} L(\hat{\theta},\theta_0)\\
        &\geq g(\eta) \biggl\{ 1 - \frac{M^{-1} \inf_{Q\in\mathcal{Q}(\mathcal{X})}\sum_{j=1}^M \mathrm{KL}(P_j,Q) + \log(2-M^{-1})}{\log M} \biggr\}.
    \end{align*}
\end{lemma}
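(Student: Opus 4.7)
The plan is a standard reduction from minimax estimation to multi-way hypothesis testing, followed by an application of Fano's inequality with a mild sharpening that replaces the usual $\log 2$ by $\log(2 - M^{-1})$.

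The first inequality is immediate: $\Theta_0 \subseteq \Theta$ and each $P_j \in \mathcal{P}_{\theta_j}$, so both the outer supremum over $\Theta$ and the inner supremum over $\mathcal{P}_\theta$ can only decrease (or stay equal) when restricted to $\Theta_0$ and to the chosen $P_j$ respectively.  I therefore focus on the second inequality.

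Given any $\hat\theta \in \hat\Theta$, I would first convert it to a test $\hat J : \mathcal{X} \to [M]$ by taking a measurable selector of $\argmin_{j \in [M]} d(\hat\theta(x),\theta_j)$ (measurability is routine since $d$ generates the Borel $\sigma$-algebra on $\Theta$ and the $\theta_j$ are fixed).  The geometric point is that if $d(\hat\theta(x),\theta_j) < \eta$, then by the triangle inequality combined with the definition of $\eta$ we must have $\hat J(x) = j$; contrapositively, $\{\hat J \neq j\} \subseteq \{d(\hat\theta,\theta_j) \geq \eta\}$.  Since $g$ is increasing this yields the pointwise bound $L(\hat\theta,\theta_j) \geq g(\eta)\, \mathbbm{1}_{\{\hat J \neq j\}}$, whence
\[
\max_{j \in [M]} \mathbb{E}_{P_j} L(\hat\theta,\theta_j) \;\geq\; \frac{g(\eta)}{M}\sum_{j=1}^M P_j(\hat J \neq j) \;\eqqcolon\; g(\eta)\,p_e,
\]
where $p_e = \mathbb{P}(\hat J \neq J)$ in the associated Bayesian problem with $J$ uniform on $[M]$ and $X \mid J = j \sim P_j$.

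Next I would assemble the standard chain: for any $Q \in \mathcal{Q}(\mathcal{X})$, the variational/Jensen bound on mutual information and the data processing inequality give
\[
\log M - \frac{1}{M}\sum_{j=1}^M \mathrm{KL}(P_j,Q) \;\leq\; \log M - I(J;X) \;\leq\; H(J \mid \hat J) \;\leq\; H(p_e) + p_e\log(M-1),
\]
the last step being Fano's inequality.  To obtain the stated constant I would then prove the one-variable calculus lemma
\[
H(p) + p\log(M-1) \;\leq\; p\log M + \log\bigl(2 - M^{-1}\bigr) \qquad \text{for } p \in [0,1],
\]
which is verified by maximising $f(p) \coloneqq H(p) - p\log\bigl(M/(M-1)\bigr)$ over $[0,1]$; the optimiser is $p^{\star} = (M-1)/(2M-1)$, and a direct substitution gives $f(p^{\star}) = \log(2 - M^{-1})$.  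Rearranging the combined inequality yields $p_e \geq 1 - \{M^{-1}\sum_j \mathrm{KL}(P_j,Q) + \log(2-M^{-1})\}/\log M$; taking the infimum over $Q$ and multiplying by $g(\eta)$ gives the claim.

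The only non-routine step is the refined entropy bound in the final paragraph: the crude inequality $H(p_e) \leq \log 2$ would give only $\log 2$ in place of $\log(2-M^{-1})$, so the calculus optimisation is needed to achieve the stated form.  All other steps are standard information-theoretic manipulations, and the measurable-selector point can be dispatched by a one-line appeal to the separability of $[M]$.
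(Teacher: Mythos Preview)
Your proof is correct. The paper itself does not supply a proof of this lemma: it is stated in the appendix as a standard minimax risk lower bound, cited from \cite{samworth2024statistics} (with pointers to \citet{tsybakov2009introduction} and \citet{wainwright2019high}), so there is no paper-side argument to compare against. Your derivation---reduction to a minimum-distance test, the inclusion $\{\hat J\neq j\}\subseteq\{d(\hat\theta,\theta_j)\geq\eta\}$, the Fano/data-processing chain, and the one-variable optimisation yielding $\log(2-M^{-1})$ rather than $\log 2$---is the standard route and all steps check out (in particular, the maximiser $p^\star=(M-1)/(2M-1)$ indeed gives $f(p^\star)=\log\bigl((2M-1)/M\bigr)=\log(2-M^{-1})$).
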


\begin{lemma}[Assouad's lemma] \label{lemma:assouad}
    Let $m\in\mathbb{N}$, $\Phi\coloneqq \{0,1\}^m$ For $\phi\in\Phi$, let $\theta_{\phi} \in \Theta$ and $P_{\phi} \in \mathcal{P}_{\theta_{\phi}}$. For $\phi,\phi' \in \Phi$, write $\phi \sim \phi'$ whenever $\phi$ and $\phi'$ differ in precisely one coordinate, and $\phi \sim_j \phi'$ when that coordinate is the $j$th. Suppose now that the loss function is of the form
    \begin{align*}
        L(\theta_1,\theta_2) \coloneqq \sum_{j=1}^m g\bigl(d_j(\theta_1,\theta_2)\bigr),
    \end{align*}
    for $\theta_1,\theta_2 \in \Theta$, where $d_1,\ldots,d_m$ are pseudo-metrics on $\Theta$ with $d_j(\theta_{\phi}, \theta_{\phi'}) \geq \alpha_j$ whenever $\phi \sim_j \phi'$, and where $g$ is an increasing function satisfying $g(x+y) \leq A\{g(x) + g(y)\}$ for all $x,y\in[0,\infty)$ and some $A>0$. Then, writing $\Theta_0 \coloneqq \{ \theta_{\phi} : \phi\in\Phi \}$, we have
    \begin{align*}
        \inf_{\hat{\theta} \in \hat{\Theta}} \sup_{\theta \in \Theta} \sup_{P_{\theta} \in \mathcal{P}_{\theta}} \mathbb{E}_{P_{\theta}} L(\hat{\theta},\theta) &\geq \inf_{\hat{\theta} \in \hat{\Theta}} \max_{\theta_0 \in \Theta_0} \sup_{P_{\theta_0} \in \mathcal{P}_{\theta_0}} \mathbb{E}_{P_{\theta_0}} L(\hat{\theta},\theta_0)\\
        &\geq \frac{1}{2A} \Bigl\{ 1 - \max_{\phi,\phi'\in\Phi : \phi\sim\phi'} \mathrm{TV}(P_{\phi}, P_{\phi'}) \Bigr\} \sum_{j=1}^d g(\alpha_j).
    \end{align*}
\end{lemma}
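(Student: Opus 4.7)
The plan is the classical three-step Assouad argument: reduce the supremum over $\Theta$ to a uniform average over the hypercube vertices $\{\theta_\phi : \phi \in \Phi\}$, exploit the additive coordinate structure of $L$, and, for each coordinate, apply a Le Cam--type two-point bound to pairs of vertices differing in that single bit.

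To begin, I would lower bound the supremum by the uniform average over $\Phi$, and then push the coordinate sum outside the infimum over $\hat\theta$ using $\inf(A+B) \geq \inf A + \inf B$:
\[
\inf_{\hat\theta \in \hat\Theta} \max_{\theta_0 \in \Theta_0} \sup_{P_{\theta_0} \in \mathcal{P}_{\theta_0}} \mathbb{E}_{P_{\theta_0}} L(\hat\theta, \theta_0) \geq \sum_{j=1}^m \inf_{\hat\theta \in \hat\Theta} \frac{1}{|\Phi|} \sum_{\phi \in \Phi} \mathbb{E}_{P_\phi} g\bigl(d_j(\hat\theta, \theta_\phi)\bigr),
\]
so it suffices to bound each coordinate summand. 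Fix $j \in [m]$ and, for $\phi \in \Phi$, write $\phi^j$ for the vector obtained by flipping the $j$-th coordinate of $\phi$, so that $\phi \sim_j \phi^j$ and hence $d_j(\theta_\phi, \theta_{\phi^j}) \geq \alpha_j$ by assumption. The triangle inequality for $d_j$, monotonicity of $g$, and the quasi-subadditivity hypothesis $g(x+y) \leq A\bigl\{g(x)+g(y)\bigr\}$ together give the pointwise bound
\[
g\bigl(d_j(\hat\theta, \theta_\phi)\bigr) + g\bigl(d_j(\hat\theta, \theta_{\phi^j})\bigr) \geq \frac{g(\alpha_j)}{A},
\]
valid for every $\hat\theta \in \hat\Theta$ and every realisation of the data.

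To convert this pointwise inequality into an expectation bound without losing a factor of two, I would integrate against the infimum measure $\mu_\phi \coloneqq P_\phi \wedge P_{\phi^j}$, whose total mass is $1 - \mathrm{TV}(P_\phi, P_{\phi^j})$. Since $\mu_\phi \leq P_\phi$ and $\mu_\phi \leq P_{\phi^j}$ and the integrand is non-negative,
\[
\mathbb{E}_{P_\phi} g\bigl(d_j(\hat\theta, \theta_\phi)\bigr) + \mathbb{E}_{P_{\phi^j}} g\bigl(d_j(\hat\theta, \theta_{\phi^j})\bigr) \geq \frac{g(\alpha_j)}{A}\bigl\{1 - \mathrm{TV}(P_\phi, P_{\phi^j})\bigr\}.
\]
Summing over $\phi \in \Phi$ and noting that the involution $\phi \leftrightarrow \phi^j$ causes each term on the left to appear twice yields
\[
\frac{1}{|\Phi|} \sum_{\phi \in \Phi} \mathbb{E}_{P_\phi} g\bigl(d_j(\hat\theta, \theta_\phi)\bigr) \geq \frac{g(\alpha_j)}{2A}\Bigl\{1 - \max_{\phi \sim \phi'} \mathrm{TV}(P_\phi, P_{\phi'})\Bigr\}.
\]
Summing over $j \in [m]$ then gives the claimed lower bound.

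The main delicate point is securing the constant $1/(2A)$ rather than $1/(4A)$. A more naive route, defining the thresholding event $E_{\phi, j} \coloneqq \bigl\{g(d_j(\hat\theta, \theta_\phi)) \geq g(\alpha_j)/(2A)\bigr\}$ and combining Markov's inequality with a Le Cam--type bound $P_\phi(E_{\phi,j}) + P_{\phi^j}(E_{\phi^j, j}) \geq 1 - \mathrm{TV}(P_\phi, P_{\phi^j})$, loses an unnecessary factor of two. Integrating the pointwise inequality directly against the common part $P_\phi \wedge P_{\phi^j}$ is what avoids this loss.
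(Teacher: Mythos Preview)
Your proof is correct and follows the standard Assouad argument. Note, however, that the paper does not actually prove this lemma: it is stated in the appendix as a known result quoted from \citet{samworth2024statistics}, with the remark that similar statements appear in \citet{tsybakov2009introduction} and \citet{wainwright2019high}. There is therefore no paper-proof to compare against; your argument is the classical one, carried out cleanly in the present generality (coordinate pseudo-metrics $d_j$ and the quasi-subadditivity condition on $g$), and your use of the infimum measure $P_\phi \wedge P_{\phi^j}$ to secure the constant $1/(2A)$ rather than $1/(4A)$ is exactly the right device.
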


\section{Auxiliary lemmas}

Our first auxiliary lemma is a minor modification of~\citet[Theorem 5.1]{chen2018robust} that we apply in the proof of Corollary~\ref{cor:huber-mean}.  To introduce the setting, let $\mathcal{Y}$ be a measurable space; for $\theta \in \Theta$, let $\mathcal{R}_{\theta} \subseteq \mathcal{Q}(\mathcal{Y})$ and let $\mathcal{R}_{\Theta} \coloneqq \{\mathcal{R}_{\theta} : \theta\in\Theta\}$.

\begin{defn}
\label{defn:modulus-of-continuity}
    Let $\varepsilon \in [0,1)$. We define the \emph{modulus of continuity} of $\mathcal{R}_{\Theta}$ with respect to $\varepsilon$ and $g$ to be \
    \begin{align*} 
        \omega(\varepsilon, \mathcal{R}_{\Theta}, g) \coloneqq \sup\, \biggl\{g\biggl( \frac{d(\theta_1, \theta_2)}{2} \biggr) :\; &\theta_1, \theta_2 \in \Theta \text{ and there exist } R_{\theta_1} \in \mathcal{R}_{\theta_1}, \nonumber\\
        &R_{\theta_2} \in \mathcal{R}_{\theta_2} \text{ such that } \mathrm{TV}(R_{\theta_1}, R_{\theta_2}) \leq \frac{\varepsilon}{1 - \varepsilon} \biggr\}.
    \end{align*}
\end{defn}
\begin{lemma} \label{thm:general-lower-bound-huber}
    For $n \in \mathbb{N}$, $\theta \in \Theta$ and $\varepsilon \in [0,1)$, let $\mathcal{X} = \mathcal{Y}^n$ and
    \begin{align*}
    \mathcal{P}_{\theta} \equiv \mathcal{P}_{\theta, \varepsilon} \coloneqq \bigl\{P^{\otimes n} \in \mathcal{Q}(\mathcal{X}):\, P = (1-\varepsilon)R_{\theta} + \varepsilon Q: R_{\theta} \in \mathcal{R}_{\theta}\, \text{ and } Q \in \mathcal{Q}(\mathcal{Y}) \bigr\}
    \end{align*}
    denote the class of contaminated distributions.  Then, for all $\delta \in (0, 1/2)$, 
    \[
    \mathcal{M}_{-}(\delta) \geq \omega(\varepsilon, \mathcal{R}_{\Theta}, g). 
    \]

\end{lemma}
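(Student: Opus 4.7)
The plan is to reduce the lower bound to our high-probability Le Cam lemma (Lemma~\ref{lemma:high-prob-le-cam-tv}) by showing that whenever $\theta_1, \theta_2 \in \Theta$ and $R_{\theta_1} \in \mathcal{R}_{\theta_1}$, $R_{\theta_2} \in \mathcal{R}_{\theta_2}$ satisfy $\mathrm{TV}(R_{\theta_1}, R_{\theta_2}) \leq \varepsilon/(1-\varepsilon)$, there is a single distribution $P \in \mathcal{Q}(\mathcal{Y})$ that belongs to \emph{both} Huber-contamination neighbourhoods; that is, $P^{\otimes n} \in \mathcal{P}_{\theta_1,\varepsilon}$ and $P^{\otimes n} \in \mathcal{P}_{\theta_2,\varepsilon}$. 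Once such a $P$ is produced, $\mathrm{TV}(P^{\otimes n}, P^{\otimes n}) = 0 < 1-2\delta$ for every $\delta \in (0,1/2)$, so Lemma~\ref{lemma:high-prob-le-cam-tv} immediately yields $\mathcal{M}_{-}(\delta) \geq g\bigl(d(\theta_1,\theta_2)/2\bigr)$. Taking the supremum over all admissible quadruples $(\theta_1,\theta_2,R_{\theta_1},R_{\theta_2})$ then gives $\mathcal{M}_{-}(\delta) \geq \omega(\varepsilon,\mathcal{R}_{\Theta},g)$, as required.

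The heart of the argument is therefore the construction of $P$, which I would carry out via the Hahn--Jordan decomposition. Writing $\eta \coloneqq \mathrm{TV}(R_{\theta_1}, R_{\theta_2})$ and working with densities $r_1, r_2$ with respect to $R_{\theta_1} + R_{\theta_2}$, I would decompose $R_{\theta_1} - R_{\theta_2} = A - B$ with $A, B$ mutually singular non-negative measures of total mass~$\eta$, and set
\begin{align*}
    Q_1 \coloneqq \frac{1-\varepsilon}{\varepsilon} B + \biggl(1 - \frac{(1-\varepsilon)\eta}{\varepsilon}\biggr) \nu, \qquad Q_2 \coloneqq \frac{1-\varepsilon}{\varepsilon} A + \biggl(1 - \frac{(1-\varepsilon)\eta}{\varepsilon}\biggr) \nu,
\end{align*}
where $\nu$ is an arbitrary probability measure on $\mathcal{Y}$. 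The hypothesis $\eta \leq \varepsilon/(1-\varepsilon)$ is used exactly to ensure $Q_1, Q_2$ are non-negative (and hence probability measures, since each has total mass $1$). A direct check using $R_{\theta_1} + B = R_{\theta_2} + A = R_{\theta_1} \vee R_{\theta_2}$ shows that
\begin{align*}
    (1-\varepsilon) R_{\theta_1} + \varepsilon Q_1 = (1-\varepsilon)(R_{\theta_1} \vee R_{\theta_2}) + \varepsilon\bigl(1 - \tfrac{(1-\varepsilon)\eta}{\varepsilon}\bigr)\nu = (1-\varepsilon) R_{\theta_2} + \varepsilon Q_2,
\end{align*}
so the common value defines the desired $P$.

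I expect the main obstacle to be purely bookkeeping: verifying that the edge case when $\varepsilon = 0$ (in which the assumption forces $R_{\theta_1} = R_{\theta_2}$ and the construction trivialises) and the constraint $\eta \leq \varepsilon/(1-\varepsilon)$ line up cleanly with the non-negativity requirements on $Q_1$ and $Q_2$, and that the supremum in Definition~\ref{defn:modulus-of-continuity} is correctly passed through the inequality (including the case where $\omega = 0$, which is vacuous). With the common distribution $P$ in hand, the remainder of the proof is essentially a one-line appeal to Lemma~\ref{lemma:high-prob-le-cam-tv}, since $P^{\otimes n}$ realises the Huber model for both $\theta_1$ and $\theta_2$ simultaneously and makes the two hypotheses indistinguishable from data.
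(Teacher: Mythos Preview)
Your proposal is correct and follows essentially the same route as the paper: both arguments construct a single distribution $P$ lying in both Huber neighbourhoods via the positive and negative parts of $R_{\theta_1}-R_{\theta_2}$, and then invoke Lemma~\ref{lemma:high-prob-le-cam-tv} with zero total variation before passing to the supremum. The only cosmetic difference is that the paper absorbs the slack by passing to $\varepsilon'\leq\varepsilon$ with $\mathrm{TV}(R_{\theta_1},R_{\theta_2})=\varepsilon'/(1-\varepsilon')$ (so that $P$ has density $(1-\varepsilon')(r_1\vee r_2)$ exactly, and one then re-expresses $P$ in the $\varepsilon$-contamination form), whereas you keep $\varepsilon$ fixed and fill the leftover mass with an auxiliary probability measure~$\nu$.
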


\begin{proof}
    We apply the same proof strategy as \citet[Theorem 5.1]{chen2018robust}.  Consider any $R_1 \in \mathcal{R}_{\theta_1}$ and $R_2 \in \mathcal{R}_{\theta_2}$ that satisfy $\mathrm{TV} (R_1, R_2) \leq \varepsilon / (1 - \varepsilon)$. Then there exists $\varepsilon' \in [0,\varepsilon]$ such that $\mathrm{TV}(R_1, R_2) = \varepsilon'/(1 - \varepsilon')$.  
    Next, for $\ell \in \{1, 2\}$, define densities 
    \begin{align*}
        r_\ell \coloneqq \frac{\mathrm{d}R_\ell}{\mathrm{d}R_1 + \mathrm{d}R_2}. 
    \end{align*} 
    We also define probability distributions $Q_1$ and $Q_2$ on $\mathcal{X}$ through their densities 
    \begin{align*}
        &\frac{\mathrm{d}Q_1}{\mathrm{d}R_1 + \mathrm{d}R_2} \coloneqq \frac{(r_2 - r_1) \mathbbm{1}_{\{r_2 \geq r_1\}}}{\mathrm{TV}(R_1, R_2)} \quad\text{and} \quad\frac{\mathrm{d}Q_2}{\mathrm{d}R_1 + \mathrm{d}R_2} \coloneqq \frac{(r_1 - r_2) \mathbbm{1}_{\{r_1 \geq r_2\}}}{\mathrm{TV}(R_1, R_2)}.
    \end{align*}
    Now define $P_\ell \coloneqq (1-\varepsilon') R_\ell + \varepsilon' Q_\ell$ for $\ell \in \{1,2\}$.  Then $P_{\ell} \in \mathcal{P}_{\theta_{\ell}}$ for $\ell \in \{1,2\}$. Moreover, direct calculation yields 
    \begin{align*}
        \frac{\mathrm{d} P_1}{\mathrm{d}R_1 + \mathrm{d}R_2} &= (1-\varepsilon')r_1 + \varepsilon' \frac{(r_2 - r_1) \mathbbm{1}_{\{r_2 \geq r_1\}}}{\varepsilon' / (1-\varepsilon')} = (1-\varepsilon')\bigl\{r_1 + (r_2 - r_1) \mathbbm{1}_{\{r_2 \geq r_1\}}\bigr\} \\
        &= (1-\varepsilon')\bigl\{r_2 + (r_1 - r_2) \mathbbm{1}_{\{r_1 \geq r_2\}}\bigr\} = \frac{\mathrm{d} P_2}{\mathrm{d} R_1 + \mathrm{d} R_2},
    \end{align*}
    so $P_1 = P_2$.  Thus, by Lemma~\ref{lemma:high-prob-le-cam-tv},
    \[
    \inf_{\hat{\theta} \in \hat{\Theta}} \sup_{\theta \in \Theta} \sup_{P \in \mathcal{P}_{\theta}} \; P \biggl\{ L(\hat{\theta}, \theta) \geq g\biggl(\frac{d(\theta_1, \theta_2)}{2}\biggr)\biggr\} \geq \frac{1}{2} \bigl\{ 1 - \mathrm{TV}(P_1^{\otimes n}, P_2^{\otimes n}) \bigr\} = \frac{1}{2},
    \]
    where the inequality follows since $P_1^{\otimes n}, P_2^{\otimes n} \in \mathcal{P}_{\Theta}$.  Further, since $R_1, R_2 \in \mathcal{R}_{\Theta}$ with $\mathrm{TV} (R_1, R_2) \leq \varepsilon / (1 - \varepsilon)$ were arbitrary, for any $t > 0$, we can find $\theta^{(t)}_1, \theta^{(t)}_2 \in \Theta$, $R_1^{(t)} \in \mathcal{R}_{\theta_1}^{(t)}$ and $R_2^{(t)} \in \mathcal{R}_{\theta_2}^{(t)}$ such that $\mathrm{TV}(R_1^{(t)}, R_2^{(t)}) \leq \varepsilon/(1-\varepsilon)$ and $g\bigl(d(\theta_{1}^{(t)}, \theta_2^{(t)})/2\bigr) \geq \omega(\varepsilon, \mathcal{R}_{\Theta}, g) - t$.  Hence, by taking $t \to 0$, we deduce that
    \begin{align*}
        \inf_{\hat{\theta} \in \hat{\Theta}} \sup_{\theta \in \Theta} \sup_{P \in \mathcal{P}_{\theta}} P \biggl\{ L\big(\hat{\theta},\, \theta \big) \geq \omega(\varepsilon, \mathcal{R}_{\Theta}, g) \biggr\} \geq \frac{1}{2},
    \end{align*}
    so the result follows.
\end{proof}

Lemma~\ref{lemma:matrix-bernstein} below applies a non-central version of the matrix Bernstein inequality \citep{tropp2012user,zhu2022high} to provide a high-probability upper bound on the operator norm of the difference between a sample covariance matrix and its population counterpart.  
\begin{lemma} \label{lemma:matrix-bernstein}
    Let $n,d\in\mathbb{N}$, $\Sigma\in\mathcal{S}_+^{d\times d}$ and $X_1,\ldots,X_n \overset{\mathrm{iid}}{\sim} \mathsf{N}_d(0,\Sigma)$. Then 
    \begin{align} \label{eq:matrix-bernstein-variance-term}
        \biggl\| \frac{1}{n}\sum_{i=1}^n \mathbb{E} \bigl\{(X_iX_i^\top - \Sigma)^2\bigr\} \biggr\|_{\mathrm{op}} \asymp \biggl\| \frac{1}{n}\sum_{i=1}^n \mathbb{E} \bigl\{(X_iX_i^\top)^2\bigr\} \biggr\|_{\mathrm{op}} \asymp \|\Sigma\|_{\mathrm{op}}^2 \mathbf{r}(\Sigma).
    \end{align}
    Moreover, writing $\hat{\Sigma} \coloneqq n^{-1}\sum_{i=1}^n X_iX_i^\top$, if $\frac{\mathbf{r}(\Sigma)\{\log(1/\delta) + \log (8d)\}}{n} \leq 1$, then with probability at least $1-\delta$, we have
    \begin{align} \label{eq:matrix-bernstein-ub}
        \|\hat{\Sigma} - \Sigma\|_{\mathrm{op}} \leq 513\|\Sigma\|_{\mathrm{op}} \sqrt{\frac{\mathbf{r}(\Sigma)\{\log(1/\delta) + \log (8d)\}}{n}}.
    \end{align}
\end{lemma}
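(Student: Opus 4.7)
The plan is to establish the two claimed equivalences in~\eqref{eq:matrix-bernstein-variance-term} first, and then deduce the high-probability bound~\eqref{eq:matrix-bernstein-ub} via Tropp's matrix Bernstein inequality combined with a truncation argument.

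For the variance identity, I would reduce to a single-sample expectation by i.i.d.\ symmetry, and exploit the rank-one factorisation $(XX^\top)^2 = \|X\|_2^2\, XX^\top$ valid for any $X\in\mathbb{R}^d$. Applying Isserlis' theorem to the Gaussian fourth moments $\mathbb{E}(X_jX_kX_lX_m) = \Sigma_{jk}\Sigma_{lm}+\Sigma_{jl}\Sigma_{km}+\Sigma_{jm}\Sigma_{kl}$ yields $\mathbb{E}\{\|X\|_2^2\,XX^\top\} = \tr(\Sigma)\Sigma + 2\Sigma^2$, and expanding $(XX^\top-\Sigma)^2 = (XX^\top)^2 - \Sigma\, XX^\top - XX^\top \Sigma + \Sigma^2$ gives $\mathbb{E}\{(XX^\top-\Sigma)^2\} = \tr(\Sigma)\Sigma + \Sigma^2$. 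Both matrices are polynomials in $\Sigma$ with non-negative coefficients, so their operator norms equal $\|\Sigma\|_{\mathrm{op}}\bigl(\tr(\Sigma)+2\|\Sigma\|_{\mathrm{op}}\bigr)$ and $\|\Sigma\|_{\mathrm{op}}\bigl(\tr(\Sigma)+\|\Sigma\|_{\mathrm{op}}\bigr)$ respectively. Since $\|\Sigma\|_{\mathrm{op}}\leq \tr(\Sigma)\leq d\|\Sigma\|_{\mathrm{op}}$, both are $\asymp \|\Sigma\|_{\mathrm{op}}\tr(\Sigma) = \|\Sigma\|_{\mathrm{op}}^2\mathbf{r}(\Sigma)$, establishing~\eqref{eq:matrix-bernstein-variance-term}.

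For the concentration bound, the key obstacle is that $X_iX_i^\top$ is unbounded, so the classical matrix Bernstein inequality does not apply directly. I would therefore truncate: choose $B \coloneqq C\|\Sigma\|_{\mathrm{op}}\bigl(\mathbf{r}(\Sigma)+\log(2n/\delta)\bigr)$ for a sufficiently large universal constant $C$, and define $Y_i \coloneqq X_iX_i^\top\mathbbm{1}_{\{\|X_i\|_2^2\leq B\}}$. The Laurent--Massart tail bound for Gaussian quadratic forms gives $\mathbb{P}(\|X_i\|_2^2 > B)\leq \delta/(2n)$, so by a union bound we have $\hat{\Sigma}=n^{-1}\sum_{i=1}^n Y_i$ with probability at least $1-\delta/2$. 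On this event, I would split
\[
\hat{\Sigma}-\Sigma \;=\; \frac{1}{n}\sum_{i=1}^n (Y_i-\mathbb{E}Y_i) \;+\; (\mathbb{E}Y_1-\Sigma).
\]
The fluctuation term is handled by Tropp's matrix Bernstein inequality, using $\|Y_i-\mathbb{E}Y_i\|_{\mathrm{op}}\leq 2B$ almost surely together with the variance proxy $\|\mathbb{E}(Y_i-\mathbb{E}Y_i)^2\|_{\mathrm{op}} \leq \|\mathbb{E}\{(X_iX_i^\top-\Sigma)^2\}\|_{\mathrm{op}}\lesssim \|\Sigma\|_{\mathrm{op}}^2\mathbf{r}(\Sigma)$ inherited from Part~1 (truncation only decreases the variance, up to a second-order correction). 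The bias $\|\mathbb{E}Y_1-\Sigma\|_{\mathrm{op}} = \bigl\|\mathbb{E}\bigl(XX^\top\mathbbm{1}_{\{\|X\|_2^2>B\}}\bigr)\bigr\|_{\mathrm{op}} \leq \mathbb{E}\bigl(\|X\|_2^2\mathbbm{1}_{\{\|X\|_2^2>B\}}\bigr)$ can be controlled by integrating the sub-exponential tail of $\|X\|_2^2$, and is of smaller order than the fluctuation term under our choice of~$B$.

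The main obstacle will be pinning down the explicit constant $513$: the truncation level, the Bernstein tolerance, and the bias estimate must be chosen in concert, and the hypothesis $\mathbf{r}(\Sigma)\{\log(1/\delta)+\log(8d)\}\leq n$ is precisely what guarantees that the Bernstein bound sits in its sub-Gaussian (square-root) regime, so that the linear-in-$B$ term $B\log(d/\delta)/n$ is absorbed into the dominant $\|\Sigma\|_{\mathrm{op}}\sqrt{\mathbf{r}(\Sigma)\{\log(1/\delta)+\log(8d)\}/n}$ contribution. Carefully tracking each of these constants, and using the factor $\log(8d)$ (rather than the more common $\log d$) to absorb the $\log 2$ arising from the Bernstein tail factor of $2d$, should deliver the stated numerical constant.
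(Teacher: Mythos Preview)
Your treatment of the variance identities~\eqref{eq:matrix-bernstein-variance-term} is correct and essentially the same as the paper's: the paper diagonalises $\Sigma$ first and computes the diagonal of $\mathbb{E}\bigl(\|X_1\|_2^2 X_1X_1^\top\bigr)$ entrywise, whereas you use Isserlis' theorem to obtain $\mathbb{E}\{(XX^\top)^2\}=\tr(\Sigma)\Sigma+2\Sigma^2$ directly; both yield the same two-sided bounds.

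The concentration argument, however, has a genuine gap. Because you union-bound the truncation event over all $n$ samples, your truncation level is forced to be $B\asymp\|\Sigma\|_{\mathrm{op}}\bigl(\mathbf{r}(\Sigma)+\log(2n/\delta)\bigr)$, and this $\log(n/\delta)$ propagates into the linear Bernstein term $B\log(d/\delta)/n$. The hypothesis $\mathbf{r}(\Sigma)\{\log(1/\delta)+\log(8d)\}\leq n$ does \emph{not} absorb this term into the square-root term: take $\mathbf{r}(\Sigma)=1$ and $\log(8d/\delta)=n$ (so the hypothesis is an equality); then $\log(2n/\delta)\asymp n$, hence $B\log(d/\delta)/n\asymp\|\Sigma\|_{\mathrm{op}}\,n$, which dwarfs the claimed bound $513\|\Sigma\|_{\mathrm{op}}$. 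So your assertion that ``the hypothesis\ldots is precisely what guarantees that the Bernstein bound sits in its sub-Gaussian regime'' is false for this route.

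The paper avoids truncation entirely by invoking a \emph{non-central} matrix Bernstein inequality \citep[][Lemma~3]{zhu2022high}, which needs only a factorial moment-growth condition $\mathbb{E}\{(X_1X_1^\top)^p\}\preceq \tfrac{p!}{2}R^{p-2}V$ for $p\geq 2$. This is verified with $R=16\tr(\Sigma)$ and $V=16\tr(\Sigma)\,\Sigma$ via sub-exponential moment bounds on $\|X_1\|_2^2$, yielding directly $\mathbb{P}\bigl(\|\hat{\Sigma}-\Sigma\|_{\mathrm{op}}\geq t\bigr)\leq 8d\exp\bigl\{-nt^2/\bigl(2^9(\|\Sigma\|_{\mathrm{op}}+t)\tr(\Sigma)\bigr)\bigr\}$. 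The constant $513\approx\sqrt{2^9\cdot 514}$ then falls out by inverting this tail and using the hypothesis to ensure $t\leq 513\|\Sigma\|_{\mathrm{op}}$; no $\log n$ ever enters, and this exact constant would not be recoverable from the truncation approach even in the regimes where that approach does give the right order.
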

\begin{remark}
From the bounds on the variance parameter in~\eqref{eq:matrix-bernstein-variance-term}, we see that the upper bound in~\eqref{eq:matrix-bernstein-ub} is the tightest (up to multiplicative constants) that one can obtain from the matrix Bernstein inequality in the sub-Gaussian regime (i.e. when $\frac{\mathbf{r}(\Sigma)\{\log(1/\delta) + \log (8d)\}}{n} \leq 1$).  In particular, the $\log(1/\delta)$ term in~\eqref{eq:matrix-bernstein-ub} appears multiplicatively with the effective rank, in stark contrast with Proposition~\ref{prop:covariance-matrix-estimation}, where the $\log(1/\delta)$ term appears only additively.      
\end{remark}

\begin{proof}
    Since the operator norm is invariant to orthogonal transformations, we may assume without loss of generality that $\Sigma$ is diagonal, so that $\Sigma = \mathrm{diag}(\lambda_1,\ldots,\lambda_d)$ where $\lambda_1 \geq \cdots \geq \lambda_d \geq 0$. Then, writing $X_1 = (X_{11}, \ldots, X_{1d}) \in \mathbb{R}^d$, we have 
    \begin{align*}
        \mathbb{E} \bigl\{(X_1X_1^\top - \Sigma)^2\bigr\} &= \mathbb{E} \bigl\{(X_1X_1^\top)^2\bigr\} - \Sigma^2\\
        &= \mathbb{E} \bigl(\|X_1\|_2^2 X_1X_1^\top\bigr) - \Sigma^2\\
        &= \mathrm{diag}\bigl( \mathbb{E}(\|X_1\|_2^2X_{11}^2), \ldots, \mathbb{E}(\|X_1\|_2^2 X_{1d}^2) \bigr) - \mathrm{diag}\bigl( \lambda_1^2, \ldots, \lambda_d^2 \bigr)\\
        &= \mathrm{diag}\biggl( 2\lambda_1^2 + \lambda_1 \sum_{j\neq 1}\lambda_j, \ldots, 2\lambda_d^2 + \lambda_d \sum_{j\neq d} \lambda_j \biggr).
    \end{align*}
    Therefore,
    \begin{align*}
        \|\Sigma\|_{\mathrm{op}} \tr(\Sigma) \leq \biggl\| \frac{1}{n}\sum_{i=1}^n \mathbb{E} \bigl\{(X_iX_i^\top - \Sigma)^2\bigr\} \biggr\|_{\mathrm{op}} \leq 2\|\Sigma\|_{\mathrm{op}} \tr(\Sigma).
    \end{align*}
    Similarly,
    \begin{align*}
        \|\Sigma\|_{\mathrm{op}} \tr(\Sigma) \leq \biggl\| \frac{1}{n}\sum_{i=1}^n \mathbb{E} \bigl\{(X_iX_i^\top)^2\bigr\} \biggr\|_{\mathrm{op}} \leq 3\|\Sigma\|_{\mathrm{op}} \tr(\Sigma),
    \end{align*}
    and~\eqref{eq:matrix-bernstein-variance-term} follows.

    
    For the second claim, observe that for $p\geq 2$, we have
    \begin{align*}
        \mathbb{E} \bigl\{(X_1X_1^\top)^p\bigr\} 
        &= \mathbb{E} \bigl(\|X_1\|_2^{2p-2} X_1X_1^\top \bigr) \\
        &= \mathrm{diag}\bigl( \mathbb{E}(\|X_1\|_2^{2p-2}X_{11}^2), \ldots, \mathbb{E}( \|X_1\|_2^{2p-2} X_{1d}^2) \bigr) \\
        &\preceq \bigl\{\mathbb{E}(\|X_1\|_2^{4p-4})\bigr\}^{1/2} \mathrm{diag}\bigl(\bigl\{\mathbb{E}( X_{11}^4)\bigr\}^{1/2},\ldots,\bigl\{\mathbb{E}( X_{1d}^4)\bigr\}^{1/2}\bigr) \\
        \overset{(i)}&{\preceq} \Bigl(\frac{8}{3}\Bigr)^{p-1} \cdot (2p-2)^{p-1} \tr(\Sigma)^{p-1} \cdot 3^{1/2}\Sigma \\
        \overset{(ii)}&{\preceq}  \Bigl(\frac{16e\tr(\Sigma)}{3}\Bigr)^{p-1} \cdot \frac{1}{p\sqrt{p-1}} \cdot p! \cdot \Sigma \\
        &\preceq \frac{p!}{2}\cdot \bigl(16\tr(\Sigma)\bigr)^{p-2} \cdot  16\tr(\Sigma) \cdot \Sigma.
    \end{align*}
    Here, Step $(i)$ follows by recalling the sub-exponential norm of a random variable $Z$, given by $\| Z \|_{\psi_1} \coloneqq \inf\{t > 0: \mathbb{E}(e^{|Z|/t}) \leq 2\}$, so that $\bigl\| \|X_1\|_2^2 \bigr\|_{\psi_1} \leq \sum_{j=1}^d \|X_{1j}^2\|_{\psi_1} = 8\tr(\Sigma)/3$.  Moreover, $(y/r)^r \leq e^y$ for all $y,r \geq 0$, so 
    \[
    \mathbb{E} \biggl\{\biggl( \frac{\|X_1\|_2^2}{(2p-2)\cdot 8\tr(\Sigma)/3} \biggr)^{2p-2}\biggr\} \leq \mathbb{E} \exp\biggl(\frac{\|X_1\|_2^2}{8\tr(\Sigma)/3}\biggr) \leq 2.
    \]
    Step $(ii)$ follows from Stirling's formula~\citep[e.g.,][Eq.~(1)]{robbins1955remark}. 
Noting that $X_1X_1^\top$ is positive semi-definite, we may apply a non-central version of the matrix Bernstein inequality~\citep[Lemma~3]{zhu2022high} to both $\hat{\Sigma}$ and $-\hat{\Sigma}$ to deduce that, for all $t > 0$,
\[
\mathbb{P}\bigl( \| \hat{\Sigma} - \Sigma \|_{\mathrm{op}} \geq t \bigr) \leq 8d \exp\biggl\{-\frac{nt^2}{2^9(\| \Sigma \|_{\mathrm{op}} + t) \tr(\Sigma)}\biggr\}.
\]
The result then follows on setting $t = (2^9+1)\|\Sigma\|_{\mathrm{op}} \sqrt{\frac{\mathbf{r}(\Sigma)\{\log(1/\delta) + \log (8d)\}}{n}}$ and invoking the assumption $\frac{\mathbf{r}(\Sigma)\{\log(1/\delta) + \log (8d)\}}{n} \leq 1$. 
\end{proof}

The following lemma provides a lower bound on the packing number of the set of $s$-sparse vectors in $\mathbb{S}^{d-1}$.  Very similar results exist in the literature (e.g.~\citet[Lemma~3]{candes2013well} or~\citet[Exercise~5.8]{wainwright2019high}); we present a short, constructive proof without imposing a parity restriction on the sparsity level $s$.
\begin{lemma}
    \label{lemma:GilbertVarshamov}
Fix $d \in \mathbb{N}$ and $s \in [d]$, and let $\mathcal{V} \coloneqq \{v \in \mathbb{S}^{d-1}:\|v\|_0 \leq s\}$.  There exist $\log M \geq \frac{3s}{4}\log\bigl(\frac{d}{4s}\bigr)$ and $v_1,\ldots,v_M \in \mathcal{V}$ such that $\|v_j - v_k\|_2 > 1/2$ for all distinct $j,k \in [M]$.    
\end{lemma}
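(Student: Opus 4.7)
The plan is a Gilbert--Varshamov packing argument on constant-weight binary codes. For each size-$s$ subset $S \subseteq [d]$, set $v_S \coloneqq s^{-1/2}\mathbbm{1}_S$, which lies in $\mathcal{V}$. Direct computation gives
\[
\|v_S - v_{S'}\|_2^2 = \frac{|S \triangle S'|}{s} = \frac{2(s - |S \cap S'|)}{s},
\]
so the required separation $\|v_S - v_{S'}\|_2 > 1/2$ is equivalent to the combinatorial condition $|S \cap S'| < 7s/8$. I would then build the packing greedily: starting from any size-$s$ subset, at each stage add to the collection any size-$s$ subset whose intersection with every previously chosen subset is strictly less than $7s/8$. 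Substituting $j = s - |S \cap S'|$, each chosen $S$ excludes at most $N \coloneqq \sum_{j=0}^{\lfloor s/8 \rfloor} \binom{s}{j}\binom{d-s}{j}$ further candidates from $\binom{d}{s}$ total, so the process terminates only once $M \geq \binom{d}{s}/N$.

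To finish, I would use the standard bound $\binom{d}{s} \geq (d/s)^s$ and, by unimodality of the binomial coefficients on $[0, s/2]$, dominate each summand of $N$ by $\binom{s}{\lfloor s/8 \rfloor}\binom{d-s}{\lfloor s/8 \rfloor} \leq (8e)^{s/8}(8ed/s)^{s/8}$. Combined with the trivial $s/8 + 1$ upper bound on the number of summands, this yields
\[
\log M \geq s\log(d/s) - \tfrac{s}{8}\log(64 e^2 d/s) - \log\Bigl(\tfrac{s}{8}+1\Bigr) \geq \tfrac{7s}{8}\log(d/s) - O(s).
\]
The target $\tfrac{3s}{4}\log(d/(4s)) = \tfrac{3s}{4}\log(d/s) - \tfrac{3s}{4}\log 4$ then follows once $d/s$ exceeds an absolute constant. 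When $d < 4s$ the target is non-positive and $M = 1$ suffices; the intermediate range $4s \leq d \leq Cs$ can be handled either by direct numerical verification or by tightening the per-summand bound via the binary entropy estimate $\sum_{j \leq s/8}\binom{s}{j} \leq e^{sH(1/8)}$.

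The main obstacle is the constant-chasing: the $7/8$ exponent that emerges naturally from constant-weight Gilbert--Varshamov is comfortably larger than the target $3/4$, but the additive $O(s)$ slack and the shift by $4s$ inside the logarithm must be propagated consistently across all regimes of $d/s$ without imposing a parity restriction on $s$. A cleaner variant would reformulate the problem by partitioning $[d]$ into $s$ consecutive blocks of size $\lfloor d/s \rfloor$ and selecting one coordinate per block; this recasts the packing as a $q$-ary code of length $s$ over an alphabet of size $q = \lfloor d/s \rfloor$ with minimum Hamming distance exceeding $s/8$, to which a $q$-ary Gilbert--Varshamov bound applies more transparently and naturally produces the target $(d/(4s))^{3s/4}$-type lower bound after some routine manipulation.
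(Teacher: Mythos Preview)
Your approach is correct and follows the same greedy Gilbert--Varshamov template as the paper, but with a different parametrisation of the code. You work on the constant-weight code of exactly-$s$ subsets and count excluded neighbours as $\sum_{j \leq s/8}\binom{s}{j}\binom{d-s}{j}$; the paper instead works on all of $\{\omega \in \{0,1\}^d:\|\omega\|_0 \leq s\}$, imposes Hamming distance $> s/4$, and bounds each Hamming ball crudely by $\sum_{\ell \leq \lfloor s/4\rfloor}\binom{d}{\ell}$ before normalising the selected $\omega_j$ by $\sqrt{s}$. The paper's cruder counting lands directly on the leading term $\tfrac{3s}{4}\log(d/s)$ and the constants close with no case analysis; your constant-weight version yields the stronger leading term $\tfrac{7s}{8}\log(d/s)$, which gives ample slack to absorb the $\log(s/8+1)$ and floor effects (note that $k \mapsto (es/k)^k$ is increasing on $(0,s)$, so your per-summand bounds are in fact valid), and the inequality already holds for all $d \geq 4s$ without a separate intermediate range---so neither the numerical verification nor the $q$-ary block reformulation you mention is actually needed.
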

\begin{proof}
Let $L \coloneqq \lfloor s/4 \rfloor$ and let $\Omega_1 \coloneqq \bigl\{\omega \in \{0,1\}^d:\|\omega\|_0 \leq s\bigr\}$.  For each $j \in \mathbb{N}$, if $\Omega_j \neq \emptyset$, fix an arbitrary $\omega_j \in \Omega_j$, let
\[
  \Omega_{j+1} \coloneqq \bigl\{\omega \in
\Omega_j:d_{\mathrm{H}}(\omega,\omega_j) > s/4\bigr\},
\]
and let $M \coloneqq \max\{j \in \mathbb{N}:\Omega_j \neq \emptyset\}$.  Then    
$|\Omega_j \setminus \Omega_{j+1}| \leq \sum_{\ell=0}^L  \binom{d}{\ell}$
for each $j \in [M]$.  Since the sets $\{\Omega_j \setminus \Omega_{j+1}:j
\in [M]\}$ form a partition of $\Omega_1$, we deduce that
\[
  \sum_{\ell=0}^s \binom{d}{\ell} = \sum_{j=1}^M |\Omega_j \setminus \Omega_{j+1}| \leq
M\sum_{\ell=0}^L  \binom{d}{\ell}.
\]
Hence
\[
M \geq \frac{\sum_{\ell=0}^s \binom{d}{\ell}}{\sum_{\ell=0}^L  \binom{d}{\ell}} \geq \frac{\binom{d}{s}}{(L+1)\binom{d}{L}} \geq \frac{(d/s)^s}{(L+1)(ed/L)^L}.
\]
It follows that
\begin{align*}
\log M &\geq s \log\Bigl(\frac{d}{s}\Bigr) - L\log\Bigl(\frac{ed}{L}\Bigr) - \log(L+1) \geq s \log\Bigl(\frac{d}{s}\Bigr) - \frac{s}{4}\log\Bigl(\frac{4ed}{s}\Bigr) - \log\Bigl(\frac{s}{4}+1\Bigr) \\
&\geq \frac{3s}{4}\log\Bigl(\frac{d}{s}\Bigr) - \frac{s}{4}\log(4e^2) = \frac{3s}{4}\log\Bigl(\frac{d}{(4e^2)^{1/3}s}\Bigr) \geq \frac{3s}{4}\log\Bigl(\frac{d}{4s}\Bigr).
\end{align*}
The points $\omega_1,\ldots,\omega_M$ satisfy
$d_{\mathrm{H}}(\omega_j,\omega_k) > s/4$ for all distinct $j,k \in [M]$,
and the result follows by taking, $v_j = \omega_{j}/\sqrt{s}$ for $j \in [M]$, so that $v_j \in \mathcal{V}$ for all $j$.
\end{proof}
Recall that in the context of kernel density estimation, a \emph{kernel} is a Borel measurable function $K:\mathbb{R} \rightarrow \mathbb{R}$ with $\int_{-\infty}^\infty K(u) \, du = 1$.  It is of \emph{order} $\ell \in \mathbb{N}$ if $\int_{-\infty}^\infty u^j K(u) \, du = 0$ for all $j \in [\ell-1]$.
\begin{lemma} \label{lemma:kde-ub}
    Let $\beta, \gamma > 0$, let $X_1,\ldots,X_n \overset{\mathrm{iid}}{\sim} f \in \mathcal{F}(\beta, \gamma)$, let $x_0 \in \mathbb{R}$ and let $K:\mathbb{R} \to \mathbb{R}$ be a kernel of order $\ell \coloneqq \lceil \beta \rceil$ satisfying $\|K\|_{\infty} < \infty$ and
    \begin{align*}
        \mu_{\beta}(K) \coloneqq \int_{-\infty}^{\infty} |u|^{\beta} |K(u)| \,\mathrm{d}u < \infty.
    \end{align*}
    There exist $c(\beta,K), C(\beta,K) > 0$, depending only on $\beta,K$, and a choice of bandwidth $h \equiv h_{n,\beta,\gamma,K,\delta}$ such that if $\delta \in \bigl[ 2e^{-c(\beta,K) \cdot n}, 1\bigr]$,
    then with probability at least $1-\delta$,
    \begin{align*}
        \bigl\{ \hat{f}_n(x_0) - f(x_0) \bigr\}^2 \leq C(\beta,K) \cdot \gamma^{2/(\beta+1)} \biggl( \frac{\log(2/\delta)}{n} \biggr)^{2\beta/(2\beta+1)},
    \end{align*}
    where $\hat{f}_n \equiv \hat{f}_{n,h,K}$ is the kernel density estimator with bandwidth $h$ and kernel $K$.
\end{lemma}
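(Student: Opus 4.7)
The plan is first to reduce to the normalised class $\mathcal{F}(\beta,1)$ by a scaling argument, and then to perform a standard bias-variance analysis for that class controlled via Bernstein's inequality, before optimising the bandwidth. For the reduction, given $f\in\mathcal{F}(\beta,\gamma)$ set $c\coloneqq \gamma^{1/(\beta+1)}$ and $\tilde f(y)\coloneqq c^{-1} f(y/c)$; a direct computation shows $\int \tilde f = 1$ and $\lvert \tilde f^{(m)}(y)-\tilde f^{(m)}(y')\rvert \leq c^{-(\beta+1)}\gamma \lvert y-y'\rvert^{\beta-m} = \lvert y-y'\rvert^{\beta-m}$, so $\tilde f\in\mathcal{F}(\beta,1)$. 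If $X_i\sim f$ then $Y_i\coloneqq cX_i\sim \tilde f$, and with $\tilde h\coloneqq ch$ and $\tilde x_0\coloneqq cx_0$ one has $\hat{\tilde f}_{n,\tilde h,K}(\tilde x_0) = c^{-1}\hat f_{n,h,K}(x_0)$ and $\tilde f(\tilde x_0)=c^{-1}f(x_0)$. Hence
\[
\bigl\{\hat f_{n,h,K}(x_0)-f(x_0)\bigr\}^2 = \gamma^{2/(\beta+1)}\bigl\{\hat{\tilde f}_{n,\tilde h,K}(\tilde x_0)-\tilde f(\tilde x_0)\bigr\}^2,
\]
so it suffices to prove, for the normalised case, a deviation bound of order $(\log(2/\delta)/n)^{2\beta/(2\beta+1)}$.

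For the normalised case, I would decompose $\hat{\tilde f}_n(\tilde x_0)-\tilde f(\tilde x_0) = \mathrm{Bias} + \mathrm{Stoch}$. Writing the bias as $\int K(u)\bigl\{\tilde f(\tilde x_0+\tilde h u)-\tilde f(\tilde x_0)\bigr\}\,\mathrm{d}u$ and applying Taylor's theorem with Lagrange remainder at order $m\coloneqq\lceil\beta\rceil-1$, the polynomial part is annihilated by $K$ (because $K$ is of order $\lceil\beta\rceil$, so $\int u^j K(u)\,\mathrm{d}u = 0$ for $j\in[m]$), and what remains is controlled by the H\"older hypothesis on $\tilde f^{(m)}$ together with the finiteness of $\mu_\beta(K)$. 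This yields $\lvert \mathrm{Bias}\rvert \leq \mu_\beta(K)\tilde h^\beta/m!$.

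The stochastic part is handled by Bernstein's inequality applied to the centred summands $Z_i - \mathbb{E} Z_i$ with $Z_i\coloneqq \tilde h^{-1}K\bigl((Y_i-\tilde x_0)/\tilde h\bigr)$. The range bound $\lvert Z_i\rvert \leq \|K\|_\infty/\tilde h$ is immediate, and a change of variables gives $\mathbb{E} Z_i^2 \leq \tilde h^{-1}\|K\|_\infty\|K\|_1\|\tilde f\|_\infty$. Both $\|K\|_1$ and $\|\tilde f\|_\infty$ are bounded by constants depending only on $\beta, K$: $\|K\|_1$ is finite on splitting $\int \lvert K\rvert$ at $\lvert u\rvert = 1$ and invoking $\|K\|_\infty<\infty$ on the inner piece and $\mu_\beta(K)<\infty$ on the outer piece; and $\|\tilde f\|_\infty \leq c_1(\beta)$ is a standard property of H\"older densities with unit $L^1$ norm. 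Bernstein's inequality then gives, with probability at least $1-\delta$,
\[
\lvert \mathrm{Stoch}\rvert \leq C_2(\beta,K)\biggl\{\sqrt{\frac{\log(2/\delta)}{n\tilde h}} + \frac{\log(2/\delta)}{n\tilde h}\biggr\}.
\]

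Setting $\tilde h\coloneqq (\log(2/\delta)/n)^{1/(2\beta+1)}$ balances the bias against the sub-Gaussian Bernstein term at order $(\log(2/\delta)/n)^{\beta/(2\beta+1)}$; the sub-exponential term is dominated exactly when $\log(2/\delta)\leq c(\beta,K)n$, giving the stated threshold $\delta\geq 2e^{-c(\beta,K)n}$. Squaring and multiplying by the scaling factor $\gamma^{2/(\beta+1)}$ produces the claimed bound. The main obstacle, I expect, is the auxiliary $L^\infty$ estimate $\|\tilde f\|_\infty \leq c_1(\beta)$ for $\tilde f \in \mathcal{F}(\beta,1)$ when $\beta>1$: when $\beta\leq 1$ it follows at once from $\int\tilde f = 1$ together with integrating the envelope $\tilde f(x)\geq \|\tilde f\|_\infty - \lvert x-x^\star\rvert^\beta$, but the extension to $\beta>1$ requires either iteratively integrating H\"older-continuous derivatives or invoking a Sobolev-type embedding to propagate the smoothness estimate down to $\tilde f$ itself, and is the step where a bit of care is needed to ensure the constant depends only on $\beta$.
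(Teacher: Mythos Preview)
Your proposal is correct and follows essentially the same route as the paper: a bias--variance decomposition with the bias controlled via the kernel's order and $\mu_\beta(K)$, the stochastic term via Bernstein's inequality (using $\|f\|_\infty\lesssim_\beta \gamma^{1/(\beta+1)}$ for the variance bound, which the paper cites rather than proves), and then bandwidth optimisation with the sub-exponential Bernstein term absorbed under the stated restriction on~$\delta$. Your preliminary scaling reduction to $\mathcal{F}(\beta,1)$ is a clean presentational device that makes the $\gamma^{2/(\beta+1)}$ factor transparent, but it does not change the argument in any essential way.
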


\begin{proof}
    By~\citet[Proposition 1.2]{tsybakov2009introduction}, we have
    \begin{align} \label{eq:bias-kde}
        \bigl| \mathbb{E} \hat{f}_n(x_0) - f(x_0) \bigr| \leq \frac{\gamma}{(\ell-1)!} \mu_{\beta}(K) h^{\beta}.
    \end{align}
    Moreover, by~\citet[Proposition 1.1]{tsybakov2009introduction} and the first display in the proof of \citet[Theorem 4.6]{samworth2024statistics}, we have
    \begin{align*}
        \Var \bigl( K_h(X_1 - x_0) \bigr) \leq \frac{1}{h} \|f\|_{\infty} R(K) \leq \frac{1}{h} C_1(\beta) \gamma^{1/(\beta+1)} R(K),
    \end{align*}
    where $R(K) \coloneqq \int_{-\infty}^{\infty} K^2(u)\,\mathrm{d}u$ and where $C_1(\beta) > 0$ depends only on $\beta$.  Now by Bernstein's inequality
    , we have for any $\delta \in (0,1]$ that
    \[
    \mathbb{P}\biggl\{\bigl| \hat{f}_n(x_0) - \mathbb{E} \hat{f}_n(x_0) \bigr| > \biggl(\frac{2C_1(\beta)\gamma^{1/(\beta+1)}R(K)\log(2/\delta)}{nh}\biggr)^{1/2} + \frac{\|K\|_\infty \log(2/\delta)}{3nh}\biggr\} \leq \delta.
    \]
    Hence, when
    \begin{align} \label{eq:kde-ub-condition-on-delta}
        \log(2/\delta) \leq nh\biggl( \frac{18C_1(\beta) \gamma^{1/(\beta+1)} R(K)}{\|K\|_{\infty}^2} \biggr),
    \end{align} 
    we have with probability at least $1-\delta$ that
    \begin{equation}
    \label{Eq:BernsteinCombined}
        \bigl| \hat{f}_n(x_0) - \mathbb{E} \hat{f}_n(x_0) \bigr| \leq 2\sqrt{\frac{2C_1(\beta) \gamma^{1/(\beta+1)} R(K) \log(2/\delta)}{nh}}.
    \end{equation}
    Combining~\eqref{Eq:BernsteinCombined} with \eqref{eq:bias-kde}, we deduce that when $\delta$ satisfies~\eqref{eq:kde-ub-condition-on-delta}, we have with probability at least $1-\delta$ that
    \begin{align*}
        \bigl| \hat{f}_n(x_0) - f(x_0) \bigr| &\leq \bigl| \hat{f}_n(x_0) - \mathbb{E} \hat{f}_n(x_0) \bigr| + \bigl| \mathbb{E} \hat{f}_n(x_0) - f(x_0) \bigr|\\
        &\leq 2\sqrt{\frac{2C_1(\beta) \gamma^{1/(\beta+1)} R(K) \log(2/\delta)}{nh}} + \frac{\gamma}{(\ell-1)!} \mu_{\beta}(K) h^{\beta}.
    \end{align*}
    Now set
    \begin{align*}
        C_2(\beta,K) \coloneqq \biggl( \frac{C_1(\beta)R(K)\{(\ell-1)!\}^2}{\mu_{\beta}^2(K)} \biggr)^{1/(2\beta+1)},
    \end{align*}
    and 
    \begin{align*}
        h \equiv h_{n,\beta,\gamma,K,\delta} \coloneqq C_2(\beta,K) \cdot \gamma^{-1/(\beta+1)} \biggl( \frac{\log(2/\delta)}{n} \biggr)^{1/(2\beta+1)}.
    \end{align*}
    Then there exist $c(\beta,K), C(\beta,K)>0$ such that 
    \begin{align*}
        \bigl\{ \hat{f}_n(x_0) - f(x_0) \bigr\}^2 \leq C(\beta,K) \cdot \gamma^{2/(\beta+1)} \biggl( \frac{\log(2/\delta)}{n} \biggr)^{2\beta/(2\beta+1)},
    \end{align*}
    with probability at least $1-\delta$, whenever $\delta \geq 2e^{-c(\beta,K) \cdot n}$.
\end{proof}

The bandwidth in Lemma~\ref{lemma:kde-ub} depends on $\delta$, which is undesirable. The following lemma uses a Lepski type method to obtain a $\delta$-independent estimator for $f(x_0)$. The same method was employed by \citet[Theorem 4.2]{devroye2016subgaussian} for heavy-tailed mean estimators.
\begin{lemma} \label{lemma:kde-ub-lepski}
    Let $\beta, \gamma > 0$, let $X_1,\ldots,X_n \overset{\mathrm{iid}}{\sim} f \in \mathcal{F}(\beta, \gamma)$, let $x_0 \in \mathbb{R}$ and let $K:\mathbb{R} \to \mathbb{R}$ be a kernel of order $\ell \coloneqq \lceil \beta \rceil$ satisfying $\|K\|_{\infty} < \infty$ and
    \begin{align*}
        \mu_{\beta}(K) \coloneqq \int_{-\infty}^{\infty} |u|^{\beta} |K(u)| \,\mathrm{d}u < \infty.
    \end{align*}
    There exists a $\delta$-independent estimator $\check{f}(x_0) \equiv \check{f}_{n,\beta,\gamma,K}(x_0)$ and $\check{C}(\beta,K) > 0$ depending only on $\beta,K$ such that for all $\delta\in (0,1]$, we have with probability at least $1-\delta$ that
    \begin{align*}
        \bigl\{ \check{f}_n(x_0) - f(x_0) \bigr\}^2 \leq \check{C}(\beta,K) \cdot \gamma^{2/(\beta+1)} \cdot \biggl\{ \biggl( \frac{\log(8/\delta)}{n} \biggr)^{2\beta/(2\beta+1)} \wedge 1\biggr\}.
    \end{align*}
\end{lemma}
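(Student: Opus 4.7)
The plan is to run the $\delta$-dependent estimator from Lemma~\ref{lemma:kde-ub} on a dyadic grid of confidence levels and aggregate via Lepski's method, closely following the strategy used by~\citet[Theorem~4.2]{devroye2016subgaussian} for sub-Gaussian mean estimation. In this way we convert a collection of estimators, each optimal only at one specific confidence level, into a single $\delta$-independent estimator that is simultaneously near-optimal at every confidence level.

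Concretely, with $c(\beta,K)$ and $C(\beta,K)$ as in Lemma~\ref{lemma:kde-ub}, set $k_{\max} \coloneqq \lfloor\log_2(c(\beta,K) n)\rfloor$, and for $k \in [k_{\max}]$ take $\delta_k \coloneqq 2e^{-2^k}$, bandwidth $h_k \coloneqq h_{n,\beta,\gamma,K,\delta_k}$ and kernel density estimator $\hat{f}^{(k)} \coloneqq \hat{f}_{n,h_k,K}$. Lemma~\ref{lemma:kde-ub} then gives $\mathbb{P}\bigl(|\hat{f}^{(k)}(x_0) - f(x_0)| > r_k\bigr) \leq \delta_k$ for each such $k$, where $r_k \coloneqq \sqrt{C(\beta,K)}\,\gamma^{1/(\beta+1)}(2^k/n)^{\beta/(2\beta+1)}$. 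Define the Lepski index
\[
\hat{k} \coloneqq \min\bigl\{k \in [k_{\max}] : |\hat{f}^{(k)}(x_0) - \hat{f}^{(j)}(x_0)| \leq 2r_j \text{ for all } k \leq j \leq k_{\max}\bigr\},
\]
with the convention $\hat{k} \coloneqq k_{\max}$ if the set is empty, and set $\check{f}(x_0) \coloneqq 0 \vee \hat{f}^{(\hat{k})}(x_0) \wedge M$, where $M \asymp_\beta \gamma^{1/(\beta+1)}$ is a uniform upper bound on $\sup_{f\in\mathcal{F}(\beta,\gamma)}\|f\|_\infty$ of the type already invoked in the proof of Lemma~\ref{lemma:kde-ub}.

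To analyse $\check{f}(x_0)$, fix $\delta \in (0,1]$ and first suppose $\delta \geq 4\delta_{k_{\max}}$. Let $k^*(\delta) \coloneqq \min\{k \in [k_{\max}] : \delta_k \leq \delta/2\}$, for which $\log(4/\delta) \leq 2^{k^*(\delta)} \leq 2\log(4/\delta) \leq 2\log(8/\delta)$. Since $\delta_{k+1} \leq \delta_k/2$ for $k \geq 1$, a geometric tail bound yields $\sum_{k=k^*(\delta)}^{k_{\max}} \delta_k \leq 2\delta_{k^*(\delta)} \leq \delta$, so that the event $E \coloneqq \bigcap_{k=k^*(\delta)}^{k_{\max}}\{|\hat{f}^{(k)}(x_0) - f(x_0)| \leq r_k\}$ satisfies $\mathbb{P}(E) \geq 1-\delta$. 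On $E$, the triangle inequality gives $|\hat{f}^{(k)}(x_0) - \hat{f}^{(j)}(x_0)| \leq r_k + r_j \leq 2r_j$ whenever $k^*(\delta) \leq k \leq j \leq k_{\max}$, so $k^*(\delta)$ satisfies the Lepski criterion and hence $\hat{k} \leq k^*(\delta)$; applying the criterion once more with $(k,j) = (\hat{k}, k^*(\delta))$ and combining with $|\hat{f}^{(k^*(\delta))}(x_0) - f(x_0)| \leq r_{k^*(\delta)}$ yields $|\hat{f}^{(\hat{k})}(x_0) - f(x_0)| \leq 3r_{k^*(\delta)}$. Projecting onto $[0,M]$ only reduces the error since $f(x_0) \in [0,M]$, and substituting $2^{k^*(\delta)} \leq 2\log(8/\delta)$ produces the claimed bound. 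When $\delta < 4\delta_{k_{\max}}$, the truncation bound $|\check{f}(x_0) - f(x_0)| \leq M \lesssim_\beta \gamma^{1/(\beta+1)}$ already suffices, because in this regime $\log(8/\delta)/n \gtrsim_{\beta,K} 1$ and so the right-hand side of the target inequality is $\asymp_{\beta,K} \gamma^{2/(\beta+1)}$. The main obstacle is coordinating the constants from Lemma~\ref{lemma:kde-ub}, the Lepski comparison and the truncation level so that the two regimes splice together cleanly; these are bookkeeping matters rather than conceptual difficulties.
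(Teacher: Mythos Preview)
Your proposal is correct and follows essentially the same Lepski-type strategy as the paper: run the $\delta$-dependent kernel estimator from Lemma~\ref{lemma:kde-ub} on a grid of confidence levels, select via a Lepski rule, and truncate to $[0,M]$ with $M\asymp_\beta\gamma^{1/(\beta+1)}$ to handle the extreme-$\delta$ regime. The only differences are implementation details: the paper uses the grid $\delta_k=2^{-k}$ and an intersection-of-intervals selection rule (taking $\hat{k}$ as the smallest $k$ for which $\bigcap_{j\geq k}\hat{I}_j\neq\emptyset$), whereas you use $\delta_k=2e^{-2^k}$ and a pairwise-comparison rule; the paper also treats the case of very small $n$ (where the grid is empty) explicitly with the zero estimator, which you fold into the ``bookkeeping'' remark.
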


\begin{proof}
    Let $c(\beta,K) > 0$ be as in Lemma~\ref{lemma:kde-ub}.
    First consider the case $n\leq 3\log(2)/c(\beta,K)$. We set $\tilde{f}_n(x_0) = 0$, and recall from the first display in the proof of \citet[Theorem 4.6]{samworth2024statistics} that $\|f\|_{\infty} \leq C_1(\beta) \gamma^{1/(\beta+1)}$ for some $C_1(\beta) > 0$ depending only on $\beta$. Therefore, when $n\leq 3\log(2)/c(\beta,K)$, we have
    \begin{align} \label{eq:kde-lepski-small-n}
        \bigl\{ \tilde{f}_n(x_0) - f(x_0) \bigr\}^2 \leq C_1(\beta)^2 \gamma^{2/(\beta+1)} \leq C_2(\beta,K) \cdot \gamma^{2/(\beta+1)} \cdot \biggl\{ \biggl( \frac{\log(8/\delta)}{n} \biggr)^{2\beta/(2\beta+1)} \wedge 1\biggr\},
    \end{align}
    for some $C_2(\beta,K) > 0$ depending only on $\beta$ and $K$. This proves the claim for $n\leq 3\log(2)/c(\beta,K)$.

    Next consider the case where $n> 3\log(2)/c(\beta,K)$.
    Let $\hat{f}_{n,\delta}(x_0)$ denote the kernel density estimator of $f(x_0)$ with bandwidth $h_{n,\beta,\gamma,K,\delta}$ as in Lemma~\ref{lemma:kde-ub}.
    Define $k_+ \coloneqq \lfloor c(\beta,K)\cdot n / \log(2) - 1 \rfloor \in \mathbb{N}$, which by assumption, satisfies $k_+ \geq c(\beta,K)\cdot n / (2\log 2)$.
    For $k\in[k_+]$, define the interval
    \begin{align*}
        \hat{I}_{k} \coloneqq \Biggl[ \hat{f}_{n,2^{-k}}(x_0) - &C(\beta,K)^{1/2} \cdot \gamma^{1/(\beta+1)} \biggl( \frac{\log(2^{k+1})}{n} \biggr)^{\beta/(2\beta+1)},\\
        &\hat{f}_{n,2^{-k}}(x_0) + C(\beta,K)^{1/2} \cdot \gamma^{1/(\beta+1)} \biggl( \frac{\log(2^{k+1})}{n} \biggr)^{\beta/(2\beta+1)} \Biggr].
    \end{align*}
    By Lemma~\ref{lemma:kde-ub}, we have for each $k \in [k_+]$ that $f(x_0) \in \hat{I}_k$ with probability at least $1-2^{-k}$.  Now define $\hat{k}\coloneqq \min\bigl\{ k\in[k_+] : \bigcap_{j=k}^{k_+} \hat{I}_j \neq \emptyset \bigr\}$.  Define the estimator 
    \[
    \tilde{f}_n(x_0) \coloneqq \begin{cases} 
    \inf \bigl\{\bigcap_{j=\hat{k}}^{k_+} \hat{I}_j \bigr\} & \text{ if } \bigcap_{j=\hat{k}}^{k_+} \hat{I}_j \neq \emptyset \\
    0 & \text{ otherwise},
    \end{cases}
    \]
    and set $\check{f}_n(x_0) \coloneqq 0 \vee \tilde{f}_n(x_0) \wedge C_1(\beta) \gamma^{1/(\beta+1)}$.  For $\delta \in [2^{-(k_+-1)}, 1]$, we set $k_{\delta}\coloneqq \min \bigl\{ k\in[k_+] : \delta \geq 2^{-(k-1)} \bigr\}$, so that $2^{k_\delta - 2} < 1/\delta \leq 2^{k_\delta-1}$. Then, by a union bound,
    \begin{align*}
        \mathbb{P}\biggl( f(x_0) \in \bigcap_{j=k_{\delta}}^{k_+} \hat{I}_j \biggr) \geq 1 - \sum_{j=k_{\delta}}^{k_+} 2^{-j} \geq 1 - 2^{-(k_{\delta}-1)} \geq 1-\delta.
    \end{align*}
    On this event, $\bigcap_{j=k_{\delta}}^{k_+} \hat{I}_j \neq \emptyset$, so $\hat{k} \leq k_{\delta}$. Hence, for all $\delta \in [2^{-(k_+-1)}, 1]$, we have with probability at least $1-\delta$ that $\tilde{f}_n(x_0) \in \hat{I}_{k_{\delta}}$, so that with probability at least $1-\delta$,
    \begin{align*} 
        \bigl\{\tilde{f}_n(x_0) - f(x_0) \bigr\}^2 &\leq 4C(\beta,K) \cdot \gamma^{2/(\beta+1)} \biggl( \frac{\log(2^{k_{\delta}+1})}{n} \biggr)^{2\beta/(2\beta+1)}\nonumber\\
        &\leq 4C(\beta,K) \cdot \gamma^{2/(\beta+1)} \biggl( \frac{\log(8/\delta)}{n} \biggr)^{2\beta/(2\beta+1)}.
    \end{align*}
    Since $f(x_0) \in \bigl[0,\, C_1(\beta)\gamma^{1/(\beta+1)}\bigr]$, it follows that when $\delta \in [2^{-(k_+-1)}, 1]$, we have with probability at least $1-\delta$ that
    \begin{align} \label{eq:kde-ub-term-1}
    \bigl\{\check{f}_n(x_0) - f(x_0)\bigr\}^2 \leq \bigl\{\tilde{f}_n(x_0) - f(x_0)\bigr\}^2 \leq 4C(\beta,K) \cdot \gamma^{2/(\beta+1)} \biggl( \frac{\log(8/\delta)}{n} \biggr)^{2\beta/(2\beta+1)}.
    \end{align}
    On the other hand, for $\delta \in (0, 2^{-(k_+-1)})$, 
    \begin{align} \label{eq:kde-ub-term-2}
        \bigl\{\check{f}_n(x_0) - f(x_0)\bigr\}^2 \leq \|f\|_{\infty}^2 \leq C_1^2(\beta) \cdot \gamma^{2/(\beta+1)}. 
    \end{align}
    Note that when $\delta = 2^{-(k_+-1)}$ and $n> 3\log(2)/c(\beta,K)$, we have that
    \[
    \frac{\log(8/\delta)}{n} = \frac{1}{n}\log\bigl(2^{k_+ + 2}\bigr) \in \biggl[\frac{c(\beta,K)}{2},\frac{4c(\beta,K)}{3}\biggr].
    \]
    Thus, there exists $C_3(\beta,K) \geq 1$ depending only on $\beta,K$ such that when $\delta\in(0,2^{-(k_+-1)})$, we have $1 \leq C_3(\beta,K) \bigl\{ \bigl( \frac{\log(8/\delta)}{n} \bigr)^{2\beta/(2\beta+1)} \wedge 1 \bigr\}$, and when $\delta\in[2^{-(k_+-1)}, 1]$, we have 
    \[
    \biggl( \frac{\log(8/\delta)}{n} \biggr)^{2\beta/(2\beta+1)} \leq C_3(\beta,K) \biggl\{ \biggl( \frac{\log(8/\delta)}{n} \biggr)^{2\beta/(2\beta+1)} \wedge 1 \biggr\}.
    \]
    The final conclusion follows by combining~\eqref{eq:kde-lepski-small-n}, ~\eqref{eq:kde-ub-term-1} and~\eqref{eq:kde-ub-term-2}, and setting $\check{C}(\beta,K) \coloneqq C_2(\beta,K) \vee \bigl[C_3(\beta,K) \bigl\{4C(\beta,K) \vee C_1^2(\beta) \bigr\}\bigr]$.
\end{proof}

\end{document}